\newcommand{\cK}{\mathcal{K}}
\newcommand{\cT}{\mathcal{T}}
\newcommand{\order}{{\mathscr{O}}} 
\newcommand{\VI}{{\overline{\varphi}}}
\newcommand{\RVI}{{\varphi}}
\newcommand{\Or}{{\overline{r}}}
\newcommand{\Equil}{\mathscr{E}}
\newcommand{\tc}{{\Breve\uptau}}
\newcommand{\Hv}{\Hat{v}}
\newcommand{\Bo}{B_{0}}
\DeclareMathOperator*{\Argmin}{Arg\,min}
\DeclareMathOperator*{\osc}{osc}
\newcommand{\cG}{\mathcal{G}}
\DeclareMathOperator{\Exp}{\mathbb{E}}
\DeclareMathOperator{\Prob}{\mathbb{P}}
\newcommand{\D}{\mathrm{d}}
\newcommand{\E}{\mathrm{e}}
\newcommand{\df}{\,\triangleq\,}
\newcommand{\transp}{^{\mathsf{T}}}
\newcommand{\Uadm}{\mathfrak{U}}
\newcommand{\Usm}{\mathfrak{U}_{\mathrm{SM}}}
\newcommand{\Ussm}{\mathfrak{U}_{\mathrm{SSM}}}
\newcommand{\Lg}{L}
\newcommand{\sF}{\mathfrak{F}}
\newcommand{\Ind}{\mathbb{I}}
\newcommand{\Cc}{\mathcal{C}}
\newcommand{\Ccl}{\mathcal{C}_{\mathrm{loc}}}
\newcommand{\Sob}{\mathscr{W}}
\newcommand{\Sobl}{\mathscr{W}_{\mathrm{loc}}}
\newcommand{\Act}{\mathbb{U}}
\newcommand{\imeas}{\mu}
\newcommand{\Lyap}{\mathcal{V}}
\newcommand{\RR}{\mathbb{R}}
\newcommand{\NN}{\mathbb{N}}
\newcommand{\abs}[1]{\lvert#1\rvert}
\newcommand{\babs}[1]{\bigl\lvert#1\bigr\rvert}
\newcommand{\norm}[1]{\lVert#1\rVert}
\newtheorem{remark}[theorem]{{\it Remark}}
\newtheorem{assumption}[theorem]{{\it Assumption}}
\title
{Convergence of The Relative Value Iteration for\\
the Ergodic Control Problem of Nondegenerate\\
Diffusions under Near-Monotone Costs}
\author{Ari Arapostathis\thanks{Department of Electrical and Computer
Engineering, The University of Texas at Austin, 1 University Station,
Austin, TX 78712 (ari@mail.utexas.edu).
This author's work was supported in part by the Office of Naval Research
through the Electric Ship Research and Development Consortium.}
\and Vivek S. Borkar\thanks{Department of Electrical Engineering,
Indian Institute of Technology, Powai, Mumbai 400076, India.
(borkar.vs@gmail.com). This author's work was supported in part by the 
J.~C.~Bose Fellowship from the Government of India
and grant 11IRCCSG014 from IRCC, IIT Bombay.}
\and K. Suresh Kumar\thanks{Department of Mathematics,
Indian Institute of Technology, Powai, Mumbai 400076, India.
(suresh@math.iitb.ac.in).}
}
\begin{document}
\maketitle

\begin{abstract}
We study the relative value iteration for the ergodic control problem
under a near-monotone running cost structure
for a nondegenerate diffusion controlled through its drift.
This algorithm takes the form of a quasilinear parabolic Cauchy initial value
problem in $\RR^{d}$.
We show that this Cauchy problem stabilizes,
or in other words, that the solution of the quasilinear parabolic equation
converges for every bounded initial condition in $\Cc^{2}(\RR^{d})$ to
the solution of the Hamilton--Jacobi--Bellman (HJB) equation associated
with the ergodic control problem.
\end{abstract}

\begin{keywords}
controlled diffusions; ergodic control; Hamilton--Jacobi--Bellman equation;
relative value iteration; reverse martingales
\end{keywords}

\begin{AMS}
Primary, 93E15, 93E20; Secondary, 60J25, 60J60, 90C40
\end{AMS}

\pagestyle{myheadings}
\thispagestyle{plain}
\markboth{ARI ARAPOSTATHIS, VIVEK S. BORKAR AND K. SURESH KUMAR}
{THE RELATIVE VALUE ITERATION UNDER NEAR-MONOTONE COST}

\section{Introduction}\label{S1}

This paper is concerned with the time-asymptotic behavior of
an optimal control problem for a nondegenerate diffusion
controlled through its drift and described by an It\^o stochastic
differential equation (SDE) in $\RR^{d}$ having the following form:
\begin{equation}\label{E-sde}
\D{X}_{t} = b(X_{t},U_{t})\,\D{t} + \upsigma(X_{t})\,\D{W}_{t}\,.
\end{equation}
Here $U_{t}$ is the control variable that takes values in some compact metric
space.
We impose standard assumptions on the data to guarantee the existence and
uniqueness of solutions to \eqref{E-sde}.
These are described in \S\ref{S-model}.
Let $r\colon \RR^{d} \times\Act\to\RR$ be a continuous function
bounded from below, which without loss of generality we assume it is nonnegative,
referred to as the \emph{running cost}.
As is well known, the ergodic control problem, in its \emph{almost sure}
(or \emph{pathwise}) formulation,
seeks to a.s.\ minimize over all admissible controls $U$ the functional
\begin{equation}\label{E-ergcrit}
\limsup_{t\to\infty}\; \frac{1}{t}\int_{0}^{t} r(X_{s},U_{s})\,\D{s}\,.
\end{equation}
A weaker, \emph{average} formulation seeks to minimize
\begin{equation}\label{E-avgcrit}
\limsup_{t\to\infty}\;\frac{1}{t}\int_{0}^{t}
\Exp^{U}\bigl[r(X_{s},U_{s})\bigr]\,\D{s}\,.
\end{equation}
Here $\Exp^{U}$ denotes the expectation operator associated
with the probability measure on the canonical
space of the process under the control $U$.
We let $\varrho$ be defined as 
\begin{equation}\label{E-varrho}
\varrho\df \inf_{U}\; \limsup_{t\to\infty}\;\frac{1}{t}\int_{0}^{t}
\Exp^{U}\bigl[r(X_{s},U_{s})\bigr]\,\D{s}\,,
\end{equation}
i.e., the infimum of \eqref{E-avgcrit} over all admissible controls
(for the definition of admissible controls see \S\ref{S-model}).
Under suitable hypotheses solutions to the ergodic control problem can be
synthesized via the Hamilton--Jacobi--Bellman (HJB) equation
\begin{equation}\label{E-HJB}
a^{ij}(x)\,\partial_{ij} V + H(x,\nabla V) = \varrho\,,
\end{equation}
where $a =[a^{ij}]$ is the symmetric
matrix $\frac{1}{2}\upsigma\,\upsigma\transp$ and
\begin{equation*}
H(x,p)\df\min_{u}\; \left\{b(x,u)\cdot p + r(x,u)\right\}\,.
\end{equation*}
The desired characterization is that a stationary Markov control $v$ is optimal
for the ergodic control problem if and only if it satisfies
\begin{equation}\label{E-v*}
H\bigl(x,\nabla V(x)\bigr)=b\bigl(x,v(x)\bigr)\cdot \nabla V(x)
+ r\bigl(x,v(x)\bigr)
\end{equation}
a.e.\ in $\RR^{d}$.
Obtaining solutions to \eqref{E-HJB} is further complicated by the fact that
$\varrho$ is unknown.
For controlled Markov chains the \emph{relative value iteration}
originating in the work of White \cite{White-63} provides an algorithm for
solving the ergodic
dynamic programming equation for the finite state finite action case.
Moreover its ramifications have given rise to popular learning algorithms
($Q$-learning) \cite{Abounadi-01}.

In \cite{RVI} we introduced a
continuous time, continuous state space analog of White's relative value iteration
(RVI) given by the quasilinear parabolic evolution equation
\begin{equation}\label{E-RVI}
\partial_{t}{\RVI}(t,x) = a^{ij}(x)\,\partial_{ij} {\RVI}(t,x)
+ H(x,\nabla {\RVI}) - {\RVI}(t,0)\,,\quad {\RVI}(0,x) = {\RVI}_{0}(x)\,.
\end{equation}
Under a uniform (geometric) ergodicity
condition that ensures the well-posedness of the associated
HJB equation we showed in \cite{RVI} that the
solution of \eqref{E-RVI} converges as $t\to\infty$
to a solution of \eqref{E-HJB}, the limit being independent of the
initial condition ${\RVI}_{0}$.
In a related work we extended these results to zero-sum stochastic differential
games and controlled diffusions under the risk sensitive criterion
\cite{arXiv-games}.

Even though the work in \cite{RVI} was probably the first such study of convergence
of a relative iteration scheme for continuous time and space Markov processes,
the blanket stability hypothesis imposed weakens these results.
Models of controlled diffusions enjoying a uniform geometric ergodicity
do not arise often in applications.
Rather, what we frequently encounter is a running cost which has a structure
which penalizes unstable behavior and thus renders all stationary optimal
controls stable.
Such is the case for quadratic costs typically used in linear control models.
A fairly general class of running costs of this type,
which includes `norm-like' costs, consists of costs satisfying
the \emph{near-monotone} condition:
\begin{equation}\label{E-nm}
\bigl\{x\in\RR^{d} : \min_{u}\;r(x,u) \le \varrho\bigr\}\ \
\text{is a compact set.}
\end{equation}

In this paper we relax the blanket geometric ergodicity assumption
and study the relative value iteration in \eqref{E-RVI}
under the near-monotone hypothesis \eqref{E-nm}.
It is well known that for near-monotone costs the HJB equation \eqref{E-HJB}
possesses a unique up to a constant solution $V$ which is bounded below
in $\RR^{d}$ \cite{book}.
However, this uniqueness result is restricted.
In general, for $\beta>\varrho$ the equation
\begin{equation}\label{E-HJBbeta}
a^{ij}(x)\,\partial_{ij} V + H(x,\nabla V) = \beta
\end{equation}
can have a multitude of solutions which are bounded below \cite{book}.
As a result, the policy iteration algorithm (PIA) may fail to converge to the
optimal value \cite{Meyn,OneFest}.
In order
to guarantee convergence of the PIA to an optimal control,
in addition to the near-monotone assumption,
a blanket Lyapunov condition is imposed in \cite[Theorem~5.2]{Meyn} which
renders all stationary Markov controls stable.
In contrast, the RVI algorithm always converges to the optimal value function
when initialized with some bounded initial value ${\RVI}_{0}$.
The reason behind the difference in performance of the two algorithms can
be explained as follows:
First, recall that the PIA algorithm consists of the following steps:
\begin{remunerate}
\item
Initialization. 
Set $k=0$ and select some stationary Markov control $v_{0}$
which yields a finite average cost.
\item
Value determination.
Determine the average cost $\varrho_{v_{k}}$ under the control $v_{k}$
and obtain a solution $V_{k}$ to the Poisson equation
\begin{equation*}
a^{ij}(x)\,\partial_{ij} V_{k} 
+ b^{i}\bigl(x,v_{k}(x)\bigr)\partial_{i} V_{k}(x)
+ r\bigl(x,v_{k}(x)\bigr) = \varrho_{v_{k}}\,,\quad x\in\RR^{d}\,.
\end{equation*}
\item
Termination.
If
\begin{equation*}
H(x,\nabla V_{k})
= \bigl[b\bigl(x,v_{k}(x)\bigr)\cdot \nabla V_{k}(x) + r\bigl(x,v_{k}(x)\bigr)\bigr]
\quad\text{a.e.}\,,
\end{equation*}
then return $v_{k}$.
\item
Policy improvement.
Select $v_{k+1}\in\Usm$ which satisfies
\begin{equation*}
v_{k+1}(x)\in
\Argmin_{u\in\Act}\;\bigl[b(x,u)\cdot \nabla V_{k}(x) + r(x,u)\bigr]\,,
\qquad x\in\RR^{d}\,.
\end{equation*}
\end{remunerate}
It is straightforward to show that if $\widehat{V}$ is a solution to
\eqref{E-HJBbeta} whose growth rate does not exceed the growth
rate of an optimal value function $V$ from \eqref{E-HJB},
or in other words the weighted norm
$\norm{\widehat{V}}_{V}$ is finite,
then $\beta=\varrho$ and $\widehat{V}$ is an optimal value function.
It turns out that if
the value function ${\RVI}_{0}$ determined at the first step $k=0$ does not grow
faster than an optimal value function $V$ then the algorithm will converge
to an optimal value function.
Otherwise, it might converge to a solution of \eqref{E-HJBbeta} that is not optimal.
However, the growth rate of an optimal value function is not known,
and there is
no simple way of selecting the initial control $v_{0}$ that will result
in the right growth rate for ${\RVI}_{0}$.
To do so one must solve a HJB-type equation, which is precisely what
the PIA algorithm tries to avoid.
In contrast, as we show in this paper,
the solution of the RVI algorithm has the property
that $x\mapsto{\RVI}(t,x)$ has the same growth rate as the optimal
value function $V$, asymptotically in $t$.
This is an essential ingredient of the mechanism responsible for convergence.

The proof of convergence of \eqref{E-RVI} is facilitated
by the study of the \emph{value iteration} (VI) equation
\begin{equation}\label{E-VI}
\partial_{t}{\VI}(t,x) = a^{ij}(x)\,\partial_{ij} {\VI}(t,x)
+ H(x,\nabla {\VI})
- \varrho\,,\quad {\VI}(0,x)={\RVI}_{0}(x)\,.
\end{equation}
The initial condition is the same as in \eqref{E-RVI}.
Also $\varrho$ is as in \eqref{E-varrho}, so it is assumed known.
Note that if ${\RVI}$ is a solution of \eqref{E-RVI}, then
\begin{equation}\label{E-rvivi}
{\VI}(t,x) = {\RVI}(t,x) -\varrho\,t + \int_{0}^{t} {\RVI}(s,0)\,\D{s}
\,,\qquad (t,x)\in\RR_{+}\times\RR^{d}\,.
\end{equation}
solves \eqref{E-VI}.
We have in particular that
\begin{equation}\label{E-ident}
{\VI}(t,x) - {\VI}(t,0) = {\RVI}(t,x) - {\RVI}(t,0) \qquad \forall x\in\RR^{d}\,,
~~\forall t\ge0\,.
\end{equation}
It follows that the function $f\df{\RVI}-{\VI}$ does not depend on $x\in\RR^{d}$
and satisfies
\begin{equation}\label{E-ode}
\frac{\D f}{\D t} + f = \varrho - \VI(t,0)\,.
\end{equation}
Conversely, if ${\VI}$ is a solution of \eqref{E-VI} then
solving \eqref{E-ode} one obtains a corresponding solution of \eqref{E-RVI}
that takes the form \cite[Lemma~4.4]{RVI}:
\begin{equation}\label{E-virvi}
{\RVI}(t,x) = {\VI}(t,x) - \int_{0}^{t}\E^{s-t}\,{\VI}(s,0)\,\D{s} +
\varrho\,(1-\E^{-t})\,,\qquad (t,x)\in\RR_{+}\times\RR^{d}\,.
\end{equation}
It also follows from \eqref{E-virvi} that if $t\mapsto\VI(t,x)$ is bounded
for each $x\in\RR^{d}$ then so is the map $t\mapsto\RVI(t,x)$, and if
the former converges as $t\to\infty$, pointwise in $x$, then so does
the latter.

We note here that we study solutions of the VI equation
that have the stochastic representation
\begin{equation}\label{E-VISR}
{\VI}(t,x) = \inf_{U}\;\Exp^{U}_{x}
\left[\int_{0}^{t}r(X_{s},U_{s})\,\D{s}+{\RVI}_{0}(X_{t})\right]-\varrho\,t\,,
\end{equation}
where the infimum is over all admissible controls.
These are called \emph{canonical solutions} (see Definition~\ref{D-canonical}).
The first term in \eqref{E-VISR} is the total cost over the
finite horizon $[0,t]$ with terminal penalty ${\RVI}_{0}$.
Under the uniform geometric ergodicity hypothesis used in \cite{RVI}
it is straightforward to show that $t\mapsto\VI(t,x)$ is locally bounded
in $x\in\RR^{d}$.
In contrast, under the near-monotone hypothesis alone, $t\mapsto\VI(t,x)$
may diverge for each $x\in\RR^{d}$.
To show convergence, we first identify a suitable region of attraction of
the solutions of the HJB under the dynamics of \eqref{E-RVI}
and then show that all $\omega$-limit points of the semiflow of
\eqref{E-RVI} lie in this region.

While we prefer to think of \eqref{E-RVI} as a continuous time and space
relative value iteration, it can also be viewed as a `stabilization
of a quasilinear parabolic PDE problem' in analogy to the
celebrated result of Has$'$minski\u{\i}
(see \cite{Hasm-60}).
Thus, the results in this paper
are also likely to be of independent interest to the PDE community.

We summarize below the main result of the paper.
We make one mild assumption: let $v^{*}$ be some optimal stationary
Markov control, i.e., a measurable function that satisfies \eqref{E-v*}.
It is well known that under the near-monotone hypothesis the diffusion
under the control $v^{*}$ is positive recurrent.
Let $\imeas_{v^{*}}$ denote the unique invariant probability measure
of the diffusion under the control $v^{*}$.
We assume that the value function $V$ in the HJB is integrable under
$\imeas_{v^{*}}$.

\begin{theorem}\label{T-main}
Suppose that the running cost is near-monotone and that
the value function $V$ of the HJB equation \eqref{E-HJB} for the ergodic
control problem
is integrable with respect to some optimal invariant probability
distribution.
Then for any bounded initial condition ${\RVI}_{0}\in\Cc^{2}(\RR^{d})$
it holds that
\begin{align*}
\lim_{t\to\infty}\;{\RVI}(t,x) = V(x)-V(0)+\varrho\,,
\end{align*}
uniformly on compact sets of $\RR^{d}$.
\end{theorem}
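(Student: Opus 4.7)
The plan is to reduce the asymptotic analysis of the RVI \eqref{E-RVI} to that of the companion VI equation \eqref{E-VI} through the correspondence \eqref{E-ident}--\eqref{E-virvi}, and then to study $\VI$ via the stochastic representation \eqref{E-VISR}. Specifically, if one shows that $\VI(t,x) - V(x)\to c^{*}$ locally uniformly for some constant $c^{*}$, then \eqref{E-virvi} combined with an Abelian-type limit (the kernel $\E^{s-t}$ has total mass $1-\E^{-t}$ and concentrates near $s=t$ as $t\to\infty$) yields $\RVI(t,x)\to V(x)-V(0)+\varrho$ locally uniformly, which is the claim of the theorem.

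For the upper bound on $\VI(t,x)-V(x)$, I would plug the optimal stationary control $U=v^{*}$ into \eqref{E-VISR}. It\^o's formula applied to $V$ together with the verification identity \eqref{E-v*} produces
\[
\Exp^{v^{*}}_{x}\Bigl[{\textstyle\int_{0}^{t}} r\bigl(X_{s},v^{*}(X_{s})\bigr)\,\D s\Bigr] + \Exp^{v^{*}}_{x}[V(X_{t})] = V(x)+\varrho\,t,
\]
so that
\[
\VI(t,x)-V(x) \le \Exp^{v^{*}}_{x}\bigl[\RVI_{0}(X_{t})-V(X_{t})\bigr].
\]
Under the near-monotone hypothesis the diffusion controlled by $v^{*}$ is positive recurrent with invariant measure $\imeas_{v^{*}}$; since $\RVI_{0}$ is bounded and $V\in L^{1}(\imeas_{v^{*}})$, the ergodic theorem gives pointwise convergence of the right-hand side to $c^{*}\df\imeas_{v^{*}}(\RVI_{0})-\imeas_{v^{*}}(V)$, and interior parabolic regularity (Krylov--Safonov) upgrades this to locally uniform convergence in $x$.

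For the matching lower bound, I would exploit the subsolution property of $V$: It\^o applied to $V$ under an arbitrary admissible $U$ gives $\Exp^{U}_{x}[\int_{0}^{t}r\,\D s]+\Exp^{U}_{x}[V(X_{t})] \ge V(x)+\varrho\,t$, hence upon substitution into \eqref{E-VISR} and taking the infimum,
\[
\VI(t,x)-V(x) \ge \inf_{U}\;\Exp^{U}_{x}\bigl[\RVI_{0}(X_{t})-V(X_{t})\bigr].
\]
The task is then to prove that the $\liminf$ of the right-hand side is at least $c^{*}$. The strategy is to characterize near-minimizers: by near-monotonicity, any control whose time-averaged occupation $\frac{1}{t}\int_{0}^{t}\mathrm{Law}(X_{s})\,\D s$ places non-negligible mass outside the compact set $\{\min_{u}r(\cdot,u)\le \varrho+\varepsilon\}$ accumulates cost in excess of $\varrho\,t$ and is therefore not near-optimal. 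Coupling this tightness-of-occupation-measures argument with the uniqueness (up to an additive constant) of the HJB solution that is bounded below forces the time-$t$ marginal of $X_{t}$ under any near-minimizer to approach $\imeas_{v^{*}}$. The reverse-martingale viewpoint enters because $V(X_{t})+\int_{0}^{t}(r(X_{s},U_{s})-\varrho)\,\D s$ is a submartingale for every $U$ and a martingale only along $v^{*}$, which is precisely the mechanism that pins down near-optimal $U$ to $v^{*}$ asymptotically.

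Once both bounds are in place, $\VI(t,x)\to V(x)+c^{*}$ locally uniformly; \eqref{E-ident} then gives $\RVI(t,x)-\RVI(t,0)\to V(x)-V(0)$, and \eqref{E-virvi} together with the Abelian limit applied to $\VI(s,0)\to V(0)+c^{*}$ yields $\RVI(t,x)\to V(x)-V(0)+\varrho$ locally uniformly. The principal obstacle is the lower-bound step: rigorously excluding near-minimizing controls that drive $X_{t}$ into regions of large $V$, since a uniform-in-$t$ bound on $\sup_{U}\Exp^{U}_{x}[V(X_{t})]$ is unavailable under the bare near-monotone hypothesis. Overcoming this appears to require combining the near-monotone tail estimates, interior parabolic regularity on $\VI$, and the reverse-martingale structure in order to convert a time-averaged bound on the running cost into pointwise-in-$t$ control of $\Exp^{U}_{x}[V(X_{t})]$ for near-optimal $U$.
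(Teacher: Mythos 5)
Your reduction of the RVI to the VI via \eqref{E-virvi} and the Abelian limit is sound, and your upper bound is essentially the paper's estimate \eqref{E-bound} combined with Lemma~\ref{L3.4}. The gap is in the lower bound, which is where the entire difficulty of the theorem lies. First, the bound you derive, $\VI(t,x)-V(x)\ge\inf_{U}\Exp^{U}_{x}\bigl[\RVI_{0}(X_{t})-V(X_{t})\bigr]$, is too weak to be useful: the infimum on the right is over \emph{all} admissible controls, including ones that deliberately drive $X_{t}$ into regions where $V$ is large, so the right-hand side can diverge to $-\infty$; your subsequent occupation-measure argument about near-minimizers of the running cost does not constrain this separate optimization. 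The correct starting point is the exact representation \eqref{E-SRb} under the minimizing control $\Hv^{t}$, which gives $\VI(t,x)-V(x)\ge\Exp^{\Hv^{t}}_{x}\bigl[\RVI_{0}(X_{t})-V(X_{t})\bigr]$ --- and even then one must control $\Exp^{\Hv^{t}}_{x}[V(X_{t})]$, precisely the quantity you concede you cannot bound. (A secondary issue: the inequality $\Exp^{U}_{x}[\int_{0}^{t}r\,\D s]+\Exp^{U}_{x}[V(X_{t})]\ge V(x)+\varrho\,t$ for arbitrary $U$ requires a transversality condition of the type \eqref{E-SRc}, which the paper establishes only for $\Hv^{t}$.)

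The missing idea is that the paper never attempts pointwise-in-$t$ control of $\Exp^{\Hv^{t}}_{x}[V(X_{t})]$. Instead it proves a uniform-in-$t$ bound on the oscillation of $\VI(t,\cdot\,)$ over a fixed ball $\Bo\supset\cK$ via the parabolic Harnack inequality (Lemmas~\ref{L-Harn1} and \ref{L-inner}), combines this with exit-time estimates under $\Hv^{t}$ and the fact that $\VI(t,0)/t\to0$ (Lemmas~\ref{L-asympt}, \ref{L5.2}, \ref{L5.3}) to obtain the two-sided growth bounds \eqref{ETm2}--\eqref{ETm3}, and then concludes not by identifying the limit of $\VI$ directly but by showing that every $\omega$-limit point of the flow lies in the region of attraction $\cG_{c}$ of Theorem~\ref{T3.10}, all of whose points converge to $\Equil$; invariance of the $\omega$-limit set then pins down the limit. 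Your reverse-martingale and tightness-of-occupation-measures sketch controls only time averages of the running cost, not the time-$t$ marginal of the controlled process, and cannot substitute for this Harnack-based compactness mechanism.
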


We also obtain a new stochastic representation for the value function
of the HJB under near-monotone costs which we state as a corollary.
This result is known to hold under uniform geometric ergodicity,
but under the near-monotone cost hypothesis alone it is completely new.

\begin{corollary}
Under the assumptions of Theorem~\ref{T-main}
the value function $V$ of the HJB for the ergodic control problem
has the stochastic representation:
\begin{equation*}
V(x) - V(y)= \lim_{t\to\infty}\;\left(\inf_{U}\;
\Exp_{x}^{U}\left[\int_{0}^{t} r(X_{s},U_{s})\,\D{s}\right]
- \inf_{U}\;\Exp_{y}^{U}
\left[\int_{0}^{t}r(X_{s},U_{s})\,\D{s}\right]\right)
\end{equation*}
for all $x,y\in\RR^{d}$.
\end{corollary}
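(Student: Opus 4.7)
The plan is to deduce the Corollary directly from Theorem~\ref{T-main} by specializing to the initial condition ${\RVI}_{0}\equiv 0$ (which is bounded and lies in $\Cc^{2}(\RR^{d})$) and then transferring the convergence statement from the RVI equation to a statement about differences of finite-horizon value functions. Concretely, with ${\RVI}_{0}\equiv 0$ the stochastic representation \eqref{E-VISR} of the canonical solution of the VI equation simplifies to
\begin{equation*}
\VI(t,x)+\varrho\,t \;=\; \inf_{U}\,\Exp_{x}^{U}\!\left[\int_{0}^{t} r(X_{s},U_{s})\,\D{s}\right],
\end{equation*}
so the difference appearing on the right-hand side of the Corollary equals $\VI(t,x)-\VI(t,y)$.

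Next I would invoke the identity \eqref{E-ident}, which gives
\begin{equation*}
\VI(t,x)-\VI(t,y) \;=\; \bigl(\VI(t,x)-\VI(t,0)\bigr)-\bigl(\VI(t,y)-\VI(t,0)\bigr) \;=\; \RVI(t,x)-\RVI(t,y)
\end{equation*}
for every $t\ge 0$. Theorem~\ref{T-main} asserts that $\RVI(t,z)\to V(z)-V(0)+\varrho$ as $t\to\infty$, uniformly on compact subsets of $\RR^{d}$; applying this at $z=x$ and $z=y$ and subtracting causes the additive constants $-V(0)+\varrho$ to cancel, leaving exactly $V(x)-V(y)$.

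There is essentially no new analytic work beyond assembling these ingredients: the entire convergence question --- the identification of a region of attraction of the HJB solutions and the control of the $\omega$-limit set of the semiflow of \eqref{E-RVI} --- is already packaged inside Theorem~\ref{T-main}. The only point that warrants a quick check is that with zero initial data the canonical VI solution indeed coincides with the (normalized) finite-horizon cost and hence that \eqref{E-VISR} does reduce to the expression used above; this is the standard verification argument for a Cauchy problem with vanishing terminal penalty, and it is the substantive content carried over from the earlier sections of the paper. Once that is in hand, the Corollary follows in two lines and its statement inherits the uniform-on-compacta mode of convergence supplied by Theorem~\ref{T-main}.
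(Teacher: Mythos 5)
Your proposal is correct and follows the route the paper clearly intends (the corollary is stated without a separate proof precisely because it is this immediate consequence of Theorem~\ref{T-main}): take ${\RVI}_{0}\equiv 0$, which is admissible since it is bounded, nonnegative, and in $\Cc^{2}(\RR^{d})$, so that \eqref{E-SRa} reduces the difference of finite-horizon optimal costs to ${\VI}(t,x)-{\VI}(t,y)={\RVI}(t,x)-{\RVI}(t,y)$ via \eqref{E-ident}, and then pass to the limit using Theorem~\ref{T-main}. No gaps.
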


We would like to note here that in \cite{ChenMeyn-99}
the authors study the value iteration algorithm for
countable state controlled Markov chains, with `norm-like'
running costs, i.e., $\min_{u}\; r(x,u)\to\infty$ as $\abs{x}\to\infty$.
The initial condition ${\RVI}_{0}$ is chosen as
some Lyapunov function corresponding to some stable control $v_{0}$.
We leave it to the reader to verify that under these hypotheses
$\norm{V}_{{\RVI}_{0}}<\infty$.
Moreover they assume that ${\RVI}_{0}$ is integrable with respect to the
invariant probability distribution $\imeas_{v^{*}}$
(see the earlier discussion concerning the PIA algorithm).
Thus their hypotheses imply that the optimal value function $V$
from \eqref{E-HJB} is also
integrable with respect to $\imeas_{v^{*}}$.

The paper is organized as follows.
The next section introduces the notation used in the paper. 
Section~\ref{S-problem} starts by
describing in detail the model and the assumptions imposed.
In \S\ref{S-ergodic} we discuss some basic properties of the HJB equation for the
ergodic control problem under near-monotone costs and the implications
of the integrability of the value function under some optimal invariant
distribution.
In \S\ref{S-VIRVI} we address the issue of existence and
uniqueness of solutions to \eqref{E-RVI} and \eqref{E-VI}
and describe some basic properties of these solutions.
In \S\ref{S-attract} we
exhibit a region of attraction for the solutions of the VI.
In \S\ref{S-growth} we derive some essential growth estimates for the solutions
of the VI and show that these solutions have locally bounded oscillation in $\RR^{d}$,
uniformly in $t\ge0$.
Section~\ref{S-main} is dedicated to the proof of convergence of the solutions
of the RVI,
while \S\ref{S-concl} concludes with some pointers to future work.

\section{Notation}\label{S-not}
The standard Euclidean norm in $\RR^{d}$ is denoted by $\abs{\,\cdot\,}$.
The set of nonnegative real numbers is denoted by $\RR_{+}$,
$\NN$ stands for the set of natural numbers, and $\Ind$ denotes
the indicator function.
We denote by $\uptau(A)$
the \emph{first exit time} of a process
$\{X_{t}\,,\ t\in\RR_{+}\}$ from a set $A\subset\RR^{d}$, defined by
\begin{equation*}
\uptau(A) \df \inf\;\{t>0 : X_{t}\not\in A\}\,.
\end{equation*}
The closure and the boundary of a set $A\subset\RR^{d}$ are denoted
by $\overline{A}$ and $\partial{A}$, respectively.
The open ball of radius $R$ in $\RR^{d}$, centered at the origin,
is denoted by $B_{R}$, and we let $\uptau_{R}\df \uptau(B_{R})$,
and $\tc_{R}\df \uptau(B^{c}_{R})$.

The term \emph{domain} in $\RR^{d}$
refers to a nonempty, connected open subset of the Euclidean space $\RR^{d}$. 
For a domain $D\subset\RR^{d}$,
the space $\Cc^{k}(D)$ ($\Cc^{\infty}(D)$)
refers to the class of all functions whose partial
derivatives up to order $k$ (of any order) exist and are continuous.

We adopt the notation
$\partial_{t}\df\tfrac{\partial}{\partial{t}}$, and for $i,j\in\NN$,
$\partial_{i}\df\tfrac{\partial~}{\partial{x}_{i}}$ and
$\partial_{ij}\df\tfrac{\partial^{2}~}{\partial{x}_{i}\partial{x}_{j}}$.
We often use the standard summation rule that
repeated subscripts and superscripts are summed from $1$ through $d$.
For example,
\begin{equation*}
a^{ij}\partial_{ij}\varphi
+ b^{i} \partial_{i}\varphi \df \sum_{i,j=1}^{d}a^{ij}
\frac{\partial^{2}\varphi}{\partial{x}_{i}\partial{x}_{j}}
+\sum_{i=1}^{d} b^{i} \frac{\partial\varphi}{\partial{x}_{i}}\,.
\end{equation*}

For a nonnegative multi-index $\alpha=(\alpha_{1},\dotsc,\alpha_{d})$
we let $D^{\alpha}\df \partial_{1}^{\alpha_{1}}\dotsb
\partial_{d}^{\alpha_{d}}$.
Let $Q$ be a domain in $\RR_{+}\times\RR^{d}$.
Recall that $\Cc^{r,k+2r}(Q)$ stands for the set of bounded
continuous functions $\varphi(t,x)$ defined on $Q$ such
that the derivatives $D^{\alpha}\partial_{t}^{\ell}\varphi$
are bounded and continuous in $Q$ for
\begin{equation*}
\abs{\alpha}+2\ell \le k+2r\,,\qquad \ell\le r\,.
\end{equation*}

In general if $\mathcal{X}$ is a space of real-valued functions on $Q$,
$\mathcal{X}_{\mathrm{loc}}$ consists of all functions $f$ such that
$f\varphi\in\mathcal{X}$ for every $\varphi\in\Cc_{c}^{\infty}(Q)$,
the space of smooth functions on $Q$ with compact support.
In this manner we obtain for example
the spaces $\Ccl^{1,2}(\RR^{d})$ and $\Sobl^{2,p}(Q)$.

We won't introduce here the parabolic Sobolev space
$\Sob^{r,k+2r,p}(Q)$, since the solutions of \eqref{E-RVI}
and \eqref{E-VI} are in $\Ccl^{1,2}(\RR^{d})$.
The only exception is the function $\psi$
in Theorem~\ref{T-Harnack} and the function $\psi_{T}$ used in the
proof of Lemma~\ref{L-Harn1}.
We refer the reader to \cite{Krylov-08} for definitions and
properties of the parabolic Sobolev space.

\section{Problem Statement and Preliminary Results}\label{S-problem}

\subsection{The model}\label{S-model}

The dynamics are modeled by a
controlled diffusion process $X = \{X_{t},\;t\ge0\}$
taking values in the $d$-dimensional Euclidean space $\RR^{d}$, and
governed by the It\^o stochastic differential equation in \eqref{E-sde}.
All random processes in \eqref{E-sde} live in a complete
probability space $(\Omega, \sF,\Prob)$.
The process $W$ is a $d$-dimensional standard Wiener process independent
of the initial condition $X_{0}$.
The control process $U$ takes values in a compact, metrizable set $\Act$, and
$U_{t}(\omega)$ is jointly measurable in
$(t, \omega)\in[0,\infty)\times\Omega$.
Moreover, it is \emph{non-anticipative}:
for $s < t$, $W_{t} - W_{s}$ is independent of
\begin{equation*}
\sF_{s} \df \text{the completion of~} \sigma\{X_{0}, U_{r}, W_{r},\; r\le s\}
\text{~relative to~} (\sF,\Prob)\,.
\end{equation*}
Such a process $U$ is called an \emph{admissible control},
and we let $\Uadm$ denote the set of all admissible controls.

We impose the following standard assumptions on the drift $b$
and the diffusion matrix $\upsigma$
to guarantee existence and uniqueness of solutions to \eqref{E-sde}.
\begin{description}
\item[(A1)]
\textit{Local Lipschitz continuity:\/}
The functions
\begin{equation*}
b=\bigl[b^{1},\dotsc,b^{d}\bigr]\transp :\RR^{d} \times\Act\mapsto\RR^{d}
\quad\text{and}\quad
\upsigma=\bigl[\upsigma^{ij}\bigr]:\RR^{d}\mapsto\RR^{d\times d}
\end{equation*}
are locally Lipschitz in $x$ with a Lipschitz constant $\kappa_{R}>0$ depending on
$R>0$.
In other words,
for all $x,y\in B_{R}$ and $u\in\Act$,
\begin{equation*}
\abs{b(x,u) - b(y,u)} + \norm{\upsigma(x) - \upsigma(y)}
\le \kappa_{R}\abs{x-y}\,.
\end{equation*}
\item[(A2)]
\textit{Affine growth condition:\/}
$b$ and $\upsigma$ satisfy a global growth condition of the form
\begin{equation*}
\abs{b(x,u)}^{2}+ \norm{\upsigma(x)}^{2}\le \kappa_{1}
\bigl(1 + \abs{x}^{2}\bigr) \qquad \forall (x,u)\in\RR^{d}\times\Act\,,
\end{equation*}
where $\norm{\upsigma}^{2}\df
\mathrm{trace}\left(\upsigma\upsigma\transp\right)$.
\item[(A3)]
\textit{Local nondegeneracy:\/}
For each $R>0$, we have
\begin{equation*}
\sum_{i,j=1}^{d} a^{ij}(x)\xi_{i}\xi_{j}
\ge\kappa^{-1}_{R}\abs{\xi}^{2} \qquad\forall x\in B_{R}\,,
\end{equation*}
for all $\xi=(\xi_{1},\dotsc,\xi_{d})\in\RR^{d}$.
\end{description}
We also assume that $b$ is continuous in $(x,u)$.

In integral form, \eqref{E-sde} is written as
\begin{equation}\label{E2}
X_{t} = X_{0} + \int_{0}^{t} b(X_{s},U_{s})\,\D{s}
+ \int_{0}^{t} \upsigma(X_{s})\,\D{W}_{s}\,.
\end{equation}
The second term on the right hand side of \eqref{E2} is an It\^o
stochastic integral.
We say that a process $X=\{X_{t}(\omega)\}$ is a solution of \eqref{E-sde},
if it is $\sF_{t}$-adapted, continuous in $t$, defined for all
$\omega\in\Omega$ and $t\in[0,\infty)$, and satisfies \eqref{E2} for
all $t\in[0,\infty)$ at once a.s.

We define the family of operators $\Lg^{u}:\Cc^{2}(\RR^{d})\mapsto\Cc(\RR^{d})$,
where $u\in\Act$ plays the role of a parameter, by
\begin{equation}\label{E3}
\Lg^{u} f(x) = a^{ij}(x)\,\partial_{ij} f(x)
+ b^{i}(x,u)\, \partial_{i} f(x)\,,\quad u\in\Act\,.
\end{equation}
We refer to $\Lg^{u}$ as the \emph{controlled extended generator} of
the diffusion.

Of fundamental importance in the study of functionals of $X$ is
It\^o's formula.
For $f\in\Cc^{2}(\RR^{d})$ and with $\Lg^{u}$ as defined in \eqref{E3},
it holds that
\begin{equation}\label{E4}
f(X_{t}) = f(X_{0}) + \int_{0}^{t}\Lg^{U_{s}} f(X_{s})\,\D{s}
+ M_{t}\,,\quad\text{a.s.},
\end{equation}
where
\begin{equation*}
M_{t} \df \int_{0}^{t}\bigl\langle\nabla f(X_{s}),
\upsigma(X_{s})\,\D{W}_{s}\bigr\rangle
\end{equation*}
is a local martingale.
Krylov's extension of the It\^o formula \cite[p.~122]{Krylov}
extends \eqref{E4} to functions $f$ in the local
Sobolev space $\Sobl^{2,p}(\RR^{d})$, $p\ge d$.

Recall that a control is called \emph{Markov} if
$U_{t} = v(t,X_{t})$ for a measurable map $v :\RR_{+}\times\RR^{d}\mapsto \Act$,
and it is called \emph{stationary Markov} if $v$ does not depend on
$t$, i.e., $v :\RR^{d}\mapsto \Act$.
Correspondingly, the equation 
\begin{equation}\label{E5}
X_{t} = x_{0} + \int_{0}^{t} b\bigl(X_{s},v(s,X_{s})\bigr)\,\D{s} +
\int_{0}^{t} \upsigma(X_{s})\,\D{W}_{s}
\end{equation}
is said to have a \emph{strong solution}
if given a Wiener process $(W_{t},\sF_{t})$
on a complete probability space $(\Omega,\sF,\Prob)$, there
exists a process $X$ on $(\Omega,\sF,\Prob)$, with $X_{0}=x_{0}\in\RR^{d}$,
which is continuous,
$\sF_{t}$-adapted, and satisfies \eqref{E5} for all $t$ at once, a.s.
A strong solution is called \emph{unique},
if any two such solutions $X$ and $X'$ agree
$\Prob$-a.s., when viewed as elements of $\Cc\bigl([0,\infty),\RR^{d}\bigr)$.
It is well known that under Assumptions (A1)--(A3),
for any Markov control $v$,
\eqref{E5} has a unique strong solution \cite{Gyongy-96}.

Let $\Usm$ denote the set of stationary Markov controls.
Under $v\in\Usm$, the process $X$ is strong Markov,
and we denote its transition function by $P^{t}_{v}(x,\cdot\,)$.
It also follows from the work of \cite{Bogachev-01,Stannat-99} that under
$v\in\Usm$, the transition probabilities of $X$
have densities which are locally H\"older continuous.
Thus $\Lg^{v}$ defined by
\begin{equation*}
\Lg^{v} f(x) = a^{ij}(x)\,\partial_{ij} f(x)
+ b^{i} \bigl(x,v(x)\bigr)\, \partial_{i} f(x)\,,\quad v\in\Usm\,,
\end{equation*}
for $f\in\Cc^{2}(\RR^{d})$,
is the generator of a strongly-continuous
semigroup on $\Cc_{b}(\RR^{d})$, which is strong Feller.
We let $\Prob_{x}^{v}$ denote the probability measure and
$\Exp_{x}^{v}$ the expectation operator on the canonical space of the
process under the control $v\in\Usm$, conditioned on the
process $X$ starting from $x\in\RR^{d}$ at $t=0$.

\subsection{The ergodic control problem}\label{S-ergodic}

We assume that the running
cost function $r\colon\RR^{d}\times\Act\to\RR_{+}$
is continuous and locally Lipschitz
in its first argument uniformly in $u\in\Act$.
Without loss of generality we let $\kappa_{R}$
be a Lipschitz constant of $r$ over $B_{R}$.
More specifically, we assume that
\begin{equation*}
\babs{r(x,u)-r(y,u)} \le \kappa_{R}\abs{x-y}
\qquad\forall x,y\in B_{R}\,,~\forall u\in\Act\,,
\end{equation*}
and all $R>0$.

As mentioned in \S\ref{S1},
an important class of running cost functions arising in practice
for which the ergodic control
problem is well behaved are the near-monotone cost functions.

The ergodic control problem for near-monotone cost functions is characterized
by the following theorem which we quote from \cite{book}.
Note that we choose to normalize the value function $V^{*}$ differently here,
in order to facilitate the use of weighted norms.

\begin{theorem}\label{T3.1}
There exists a unique function $V^{*}\in\Cc^{2}(\RR^{d})$
which solves the HJB equation \eqref{E-HJB},
and satisfies $\min_{\RR^{d}}\;V^{*}=1$.
Also, a control $v\in\Usm$ is optimal with respect to the criteria
\eqref{E-ergcrit} and \eqref{E-avgcrit} if and only if it satisfies
\eqref{E-v*}
a.e.\ in $\RR^{d}$.
Moreover, recalling that $\Breve{\uptau}_{R}=\uptau(B_{R}^{c})$, $R>0$, we have
\begin{equation}\label{E-strepr}
V^{*}(x) = \inf_{v\in\Ussm}\;
\Exp_{x}^{v} \left[\int_{0}^{\Breve{\uptau}_{R}}
\bigl(r\bigl(X_{t},v(X_{t})\bigr)-\varrho\bigr)\,\D{t}
+ V^{*}(X_{\uptau(B_{R}^{c})})\right] \qquad \forall x\in B_{R}^{c}\,,
\end{equation}
for all $R>0$.
\end{theorem}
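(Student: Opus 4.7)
The plan is to prove Theorem~\ref{T3.1} by the classical vanishing-discount approach followed by a verification argument that exploits the near-monotone structure \eqref{E-nm}.

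For existence, I would first fix $\alpha>0$ and solve the discounted HJB
\begin{equation*}
\alpha\Hat V_{\alpha}(x) = a^{ij}(x)\,\partial_{ij}\Hat V_{\alpha}(x) + H(x,\nabla \Hat V_{\alpha}(x))
\end{equation*}
by standard elliptic theory (a contraction on bounded $\Cc^{2}$ functions). Near-monotonicity then yields two key bounds: (i) $\alpha\Hat V_{\alpha}(0)\to\varrho$ as $\alpha\downarrow 0$, by comparison with an admissible control that achieves the ergodic infimum; and (ii) the renormalized family $V_{\alpha}\df \Hat V_{\alpha}-\min_{\RR^{d}}\Hat V_{\alpha}+1$ is locally uniformly bounded and equicontinuous on compacta, since outside a large ball the minimized running cost strictly exceeds $\varrho$, pinning down the growth of $V_{\alpha}$. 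Arzel\`a--Ascoli together with interior Schauder and $\Sobl^{2,p}$ estimates yields a subsequential limit $V^{*}\in\Cc^{2}(\RR^{d})$ solving \eqref{E-HJB} with $\min_{\RR^{d}}V^{*}=1$. Uniqueness up to an additive constant under \eqref{E-nm} is the statement quoted from \cite{book} and pins down $V^{*}$ once the normalization is fixed.

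To characterize optimal controls, let $v\in\Usm$ satisfy \eqref{E-v*}. Krylov's extension of It\^o's formula applied to $V^{*}(X_{t})$ under $v$, stopped at $\uptau_{R}\wedge t$, together with the identity $a^{ij}\partial_{ij}V^{*}+b(\cdot,v)\cdot\nabla V^{*}+r(\cdot,v)=\varrho$ derived from \eqref{E-HJB} and \eqref{E-v*}, yields
\begin{equation*}
V^{*}(x)= \Exp^{v}_{x}\!\left[\int_{0}^{\uptau_{R}\wedge t}\bigl(r(X_{s},v(X_{s}))-\varrho\bigr)\,\D s
+V^{*}(X_{\uptau_{R}\wedge t})\right].
\end{equation*}
Since $V^{*}$ is bounded below and near-monotonicity forces positive recurrence of $X$ under $v$ (the process cannot remain indefinitely in the set $\{\min_{u}r>\varrho\}$ without paying more than $\varrho$ per unit time), letting $R\to\infty$, then $t\to\infty$, and dividing by $t$ shows that the ergodic cost under $v$ equals $\varrho$, so $v$ is optimal for both \eqref{E-ergcrit} and \eqref{E-avgcrit}. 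For the converse, if $v\in\Usm$ is optimal, then its ergodic occupation measure $\imeas_{v}$ exists and has a strictly positive density by Assumption~(A3) and \cite{Bogachev-01,Stannat-99}; integrating \eqref{E-HJB} against $\imeas_{v}$ (after noting that $\int \Lg^{v}V^{*}\,\D\imeas_{v}=0$ by invariance) forces $H(x,\nabla V^{*})=b(x,v(x))\cdot\nabla V^{*}+r(x,v(x))$ $\imeas_{v}$-a.s., hence a.e.\ in $\RR^{d}$.

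For the stochastic representation \eqref{E-strepr}, fix $R>0$, $x\in B_{R}^{c}$, and $v\in\Ussm$. Applying It\^o's formula to $V^{*}(X_{t})$ up to $\tc_{R}\wedge t$ and using the HJB inequality $\Lg^{v}V^{*}+r(\cdot,v)\ge \varrho$ yields
\begin{equation*}
V^{*}(x)\le \Exp^{v}_{x}\!\left[\int_{0}^{\tc_{R}\wedge t}\bigl(r(X_{s},v(X_{s}))-\varrho\bigr)\,\D s
+V^{*}(X_{\tc_{R}\wedge t})\right],
\end{equation*}
with equality whenever $v$ satisfies \eqref{E-v*}. Taking the infimum over $\Ussm$ and then letting $t\to\infty$ gives \eqref{E-strepr}, with the infimum attained by any optimal $v^{*}$. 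The hard part is that $V^{*}$ is only bounded below and may grow at infinity, so the exchange of limits is delicate: one must control $\Exp^{v}_{x}[V^{*}(X_{\tc_{R}\wedge t})\Ind_{\{\tc_{R}>t\}}]$ using the finiteness of $\Exp^{v}_{x}[\tc_{R}]$ for $v\in\Ussm$ together with the near-monotone estimate on the time $X$ can spend in $\{\min_{u}r\le\varrho\}^{c}$. Once these integrability issues are resolved, all three assertions of the theorem follow.
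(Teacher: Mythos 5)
The paper itself contains no proof of Theorem~\ref{T3.1}: it is quoted (with a different normalization of $V^{*}$) from \cite{book}, so there is nothing in-paper to compare against. Your vanishing-discount outline is indeed the route taken in that reference, and the broad architecture --- discounted equation, near-monotonicity to localize the minimum of $\Hat{V}_{\alpha}$ and control the renormalized family, Arzel\`a--Ascoli plus interior estimates to pass to the limit, then verification via Dynkin's formula --- is the right one.

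However, three steps in your sketch are not merely ``delicate'' but genuinely unproven as written. First, the discounted HJB cannot be solved as ``a contraction on bounded $\Cc^{2}$ functions'': near-monotone costs are typically unbounded (e.g.\ norm-like), so $\Hat{V}_{\alpha}$ is unbounded and one must work with truncations $r^{n}\uparrow r$ and monotone limits, in the same spirit as the construction in the paper's Theorem~\ref{T-canonical}. Second, in the converse direction of the optimality characterization you integrate \eqref{E-HJB} against $\imeas_{v}$ and invoke $\int\Lg^{v}V^{*}\,\D\imeas_{v}=0$; for an unbounded, merely bounded-below $V^{*}$ this identity is not automatic --- its integrability with respect to an optimal invariant measure is precisely what Assumption~\ref{A3.2} adds \emph{later} and is not part of Theorem~\ref{T3.1} --- so a localization argument is required and is missing. (Relatedly, your forward direction only yields optimality for the average criterion \eqref{E-avgcrit}; the a.s.\ pathwise claim in \eqref{E-ergcrit} needs an additional ergodic argument.) Third, for \eqref{E-strepr} you correctly identify that the whole difficulty is showing $\Exp^{v}_{x}\bigl[V^{*}(X_{t})\,\Ind\{\tc_{R}>t\}\bigr]\to0$ as $t\to\infty$, but you then assume it away; this is the content of estimates such as \cite[Corollary~3.7.3]{book}, which the paper relies on elsewhere, and it is exactly where the near-monotone structure must be exploited. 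In short, the plan matches the standard proof, but as it stands it is a program rather than a proof.
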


Recall that control $v\in\Usm$ is called \emph{stable}
if the associated diffusion is positive recurrent.
We denote the set of such controls by $\Ussm$,
and let $\imeas_{v}$ denote the unique invariant probability
measure on $\RR^{d}$ for the diffusion under the control $v\in\Ussm$.
Recall that $v\in\Ussm$ if and only if there exists an inf-compact function
$\Lyap\in\Cc^{2}(\RR^{d})$, a bounded domain $D\subset\RR^{d}$, and
a constant $\varepsilon>0$ satisfying
\begin{equation*}
\Lg^{v}\Lyap(x) \le -\varepsilon
\qquad\forall x\in D^{c}\,.
\end{equation*}
It follows that the optimal control $v^{*}$ in Theorem~\ref{T3.1} is stable.

We make the following mild technical assumption
which is in effect throughout the paper:

\begin{assumption}\label{A3.2}
The value function $V^{*}$ is integrable with respect
to some optimal invariant probability distribution
$\imeas_{v^{*}}$.
\end{assumption}

\begin{remark}
Assumption~\ref{A3.2} is equivalent to the following \cite[Lemma~3.3.4]{book}:
there exists an optimal stationary control $v^{*}$ and an inf-compact
function $\Lyap\in\Cc^{2}(\RR^{d})$ and an
open ball $B\subset\RR^{d}$ such that
\begin{equation}\label{E-2order}
\Lg^{v^{*}} \Lyap(x) \le -V^{*}(x)\qquad \forall x\in B^{c}\,.
\end{equation}
For the rest of the paper $v^{*}\in\Ussm$
denotes some fixed control satisfying \eqref{E-v*} and \eqref{E-2order}.
\end{remark}

\begin{remark}
Assumption~\ref{A3.2} is pretty mild.
In the case that $r$ is bounded it is equivalent to the statement
that the mean hitting times to an open bounded set are integrable
with respect to some  optimal invariant probability distribution.
In the case of one dimensional
diffusions, provided $\upsigma(x)>\upsigma_{0}$
for some constant $\upsigma_{0}>0$, and
$\limsup_{\abs{x}\to\infty}\; \frac{x\,b(x)}{\upsigma^{2}(x)}<-\frac{1}{2}$,
then the mean hitting time of $0\in\RR$ is bounded above
by a second-degree polynomial in $x$
\cite[Theorem~5.6]{Locherbach-11}.
Therefore, in this case, the existence of second moments for
$\imeas_{v^{*}}$ implies Assumption~\ref{A3.2}.
\end{remark}

We need the following lemma.

\begin{lemma}\label{L3.4}
Under Assumption~\ref{A3.2},
\begin{equation*}
\Exp_{x}^{v^{*}}\bigl[V^{*}(X_{t})\bigr]
\xrightarrow[t\to\infty]{}\imeas_{v^{*}}[V^{*}] \df
\int_{\RR^{d}} V^{*}(x)\,\imeas_{v^{*}}(\D{x})\qquad \forall x\in\RR^{d}\,,
\end{equation*}
where, as defined earlier,
$\imeas_{v^{*}}$ is the invariant probability measure of the diffusion
under the control $v^{*}$.
Also there exists a constant $m_{r}$ depending on $r$ such that
\begin{equation}\label{E-deg2bound}
\sup_{t\ge0}\;\Exp^{v^{*}}_{x}\bigl[V^{*}(X_{t})\bigr]
\le m_{r}(V^{*}(x)+1)\qquad \forall x\in\RR^{d}\,.
\end{equation}
\end{lemma}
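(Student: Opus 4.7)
For the convergence statement, I would combine Krylov's extension of It\^o's formula with the HJB equation. Stopping at $t\wedge\uptau_{R}$ and using $\Lg^{v^{*}}V^{*}=\varrho-r(\cdot,v^{*}(\cdot))$ gives
\begin{equation*}
\Exp^{v^{*}}_{x}\bigl[V^{*}(X_{t\wedge\uptau_{R}})\bigr]
 = V^{*}(x) + \varrho\,\Exp^{v^{*}}_{x}[t\wedge\uptau_{R}]
 - \Exp^{v^{*}}_{x}\!\left[\int_{0}^{t\wedge\uptau_{R}} r(X_{s},v^{*}(X_{s}))\,\D{s}\right]\!.
\end{equation*}
Since $v^{*}\in\Ussm$, $\uptau_{R}\to\infty$ almost surely as $R\to\infty$, and $r\ge 0$, Fatou's lemma furnishes the preliminary bound $\Exp^{v^{*}}_{x}[V^{*}(X_{t})]\le V^{*}(x)+\varrho t$, in particular ensuring finiteness of $\Exp^{v^{*}}_{x}[V^{*}(X_{t})]$ for every $t\ge 0$. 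The convergence $\Exp^{v^{*}}_{x}[V^{*}(X_{t})]\to\imeas_{v^{*}}[V^{*}]$ would then follow from the total variation convergence $P^{t}_{v^{*}}(x,\cdot)\to\imeas_{v^{*}}$, itself a consequence of the strong Feller property combined with positive Harris recurrence of the nondegenerate diffusion under $v^{*}$, together with uniform integrability of $\{V^{*}(X_{t})\}_{t\ge 0}$ under $\Prob^{v^{*}}_{x}$; the latter is extracted from \eqref{E-2order} after a routine truncation.

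For the uniform bound \eqref{E-deg2bound}, my plan is a first-passage decomposition at $T_{B}\df\inf\{t\ge 0: X_{t}\in\overline{B}\}$, where $B$ is the ball furnished by \eqref{E-2order}. For $x\in B^{c}$, Dynkin's formula applied to $\Lyap$ together with \eqref{E-2order} and $V^{*}\ge 1$ yields $\Exp^{v^{*}}_{x}[t\wedge T_{B}]\le\Lyap(x)$. Inserting this into the Dynkin identity for $V^{*}$ stopped at $t\wedge T_{B}$ and invoking $\Lg^{v^{*}}V^{*}\le\varrho$ gives
\begin{equation*}
\Exp^{v^{*}}_{x}\!\bigl[V^{*}(X_{t})\Ind_{\{T_{B}>t\}}\bigr]
 \;\le\; \Exp^{v^{*}}_{x}\!\bigl[V^{*}(X_{t\wedge T_{B}})\bigr]
 \;\le\; V^{*}(x) + \varrho\,\Lyap(x).
\end{equation*}
On the complementary event $\{T_{B}\le t\}$, the strong Markov property at $T_{B}$ reduces matters to bounding $K\df\sup_{y\in\overline{B},\,s\ge 0}\Exp^{v^{*}}_{y}[V^{*}(X_{s})]$. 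I would show $K<\infty$ by combining the pointwise convergence from the first part (so that each orbit $s\mapsto\Exp^{v^{*}}_{y}[V^{*}(X_{s})]$ is bounded for fixed $y\in\overline{B}$) with the parabolic Harnack inequality applied to the caloric function $(s,y)\mapsto\Exp^{v^{*}}_{y}[V^{*}(X_{s})]$ on a cylinder covering $\overline{B}$, which upgrades pointwise boundedness in $s$ to uniformity in $y\in\overline{B}$.

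The main obstacle I anticipate is converting the resulting estimate $V^{*}(x)+\varrho\,\Lyap(x)+K$ into the claimed linear form $m_{r}(V^{*}(x)+1)$. This step will require invoking the specific construction of $\Lyap$ underlying \cite[Lemma~3.3.4]{book}, in which $\Lyap$ can be realized as a potential of $V^{*}$ under $v^{*}$ and therefore admits a dominance $\Lyap\le C(V^{*}+1)$ for some constant $C$ depending only on the data. Without this constructive identification the argument above yields only the weaker bound $m_{r}(V^{*}(x)+\Lyap(x)+1)$, and so pinning down the $r$-dependence of the constant is the essential remaining technical point.
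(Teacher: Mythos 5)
Your first-passage decomposition for \eqref{E-deg2bound} stalls exactly where you say it does, and the fix you propose does not exist: the Lyapunov function $\Lyap$ of \eqref{E-2order} is (in the construction behind \cite[Lemma~3.3.4]{book}) essentially the potential of $V^{*}$, i.e.\ $\Lyap(x)\approx\Exp_{x}^{v^{*}}\bigl[\int_{0}^{T_{B}}V^{*}(X_{s})\,\D{s}\bigr]$, and this generically grows strictly faster than $V^{*}$ --- for bounded $r$ with $V^{*}$ of linear growth, $\Lyap$ is quadratic. Indeed, if some $\Lyap\le C(V^{*}+1)$ satisfied $\Lg^{v^{*}}\Lyap\le -V^{*}$, one would essentially need $V^{*}\lesssim r-\varrho$, which fails whenever $V^{*}$ outgrows the running cost. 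So your bound $V^{*}(x)+\varrho\,\Lyap(x)+K$ cannot be converted into $m_{r}(V^{*}(x)+1)$. The paper avoids this entirely by exploiting that $V^{*}$ \emph{is} the solution of the (discrete-time skeleton) Poisson equation $\widetilde{P}V^{*}-V^{*}=\varrho-g_{r}$: telescoping gives $\widetilde{P}^{n}V^{*}=V^{*}-\sum_{k<n}(\widetilde{P}^{k}g_{r}-\varrho)$ exactly, and the accumulated sum is bounded by $\widetilde m(V^{*}+1)$ via the $f$-regularity theorem \cite[Theorem~14.0.1]{Meyn-09} applied with $f=g_{r}$ and Lyapunov function $V^{*}$ itself (here the near-monotone condition supplies $g_{r}-\varrho\ge\varepsilon_{0}$ off a compact set). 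That said, your decomposition is repairable without $\Lyap$ at all: stop at the hitting time $\tc$ of $\Bo\supset\cK$ rather than of $B$; on $\Bo^{c}$ one has $\Lg^{v^{*}}V^{*}=\varrho-r(\cdot,v^{*}(\cdot))\le-\delta_{0}$, so $V^{*}$ is a supermartingale up to $\tc$ and $\Exp_{x}^{v^{*}}[V^{*}(X_{t\wedge\tc})]\le V^{*}(x)$ directly, leaving only the finiteness of $K=\sup_{s\ge0,\,y\in\overline{B}_{0}}\Exp_{y}^{v^{*}}[V^{*}(X_{s})]$; your Harnack argument for that is a reasonable (and nice) alternative to the paper's skeleton computation, modulo verifying that $(s,y)\mapsto\Exp_{y}^{v^{*}}[V^{*}(X_{s})]$ is genuinely caloric.

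There is also a soft spot in the first part. Total variation convergence of $P^{t}_{v^{*}}(x,\cdot)$ to $\imeas_{v^{*}}$ does give $\Exp_{x}^{v^{*}}[V^{*}(X_{t})]\to\imeas_{v^{*}}[V^{*}]$ \emph{if} $\{V^{*}(X_{t})\}_{t\ge0}$ is uniformly integrable, but uniform integrability is not a ``routine truncation'' consequence of \eqref{E-2order}: the drift inequality $\Lg^{v^{*}}\Lyap\le-V^{*}$ only yields time-averaged control $\Exp_{x}^{v^{*}}\bigl[\int_{0}^{t}V^{*}(X_{s})\,\D{s}\bigr]\le\Lyap(x)+Ct$, not fixed-time tail bounds, and UI of $\{V^{*}(X_{t})\}$ is strictly stronger than the bound \eqref{E-deg2bound} you are still trying to prove. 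The clean route --- and the one the paper takes --- is to invoke the $V^{*}$-norm ergodic theorem \cite[Theorem~5.3~(i)]{MT-III} directly, which delivers $\abs{P^{t}_{v^{*}}V^{*}(x)-\imeas_{v^{*}}[V^{*}]}\to0$ without any separate UI step.
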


\begin{proof}
Since $r$ is nonnegative, by Dynkin's formula we have
\begin{equation}\label{E-Dynkinbound}
\Exp_{x}^{v^{*}}[V^{*}(X_{t})]\le V^{*}(x)+ \varrho\, t
\qquad\forall t\ge0\,,~\forall x\in\RR^{d}\,.
\end{equation}
Therefore, since $V^{*}$ is integrable with respect to
$\imeas_{v^{*}}$ by Assumption~\ref{A3.2},
the first result follows by \cite[Theorem~5.3~(i)]{MT-III}.
The bound in \eqref{E-deg2bound} is the continuous time analogue of
(14.5) in \cite{Meyn-09}.
Recall that a skeleton of a continuous-time Markov process is
a discrete-time Markov process with transition probability
$\widehat{P}= \int_{0}^{\infty} \alpha(\D{t})P^{t}$, where $\alpha$
is a probability measure on $(0,\infty)$.
Since the diffusion is nondegenerate, any skeleton of the process
is $\phi$-irreducible, with an irreducibility measure absolutely
continuous with respect to the Lebesgue measure
(for a definition of $\phi$-irreducibility we refer the reader to
\cite[Chapter~4]{Meyn-09}).
It is also straightforward to show that compact subsets of $\RR^{d}$ are petite.
Define the transition probability $\widetilde{P}$ by
\begin{equation*}
\widetilde{P}f(x) =
\int_{\RR^{d}}\widetilde{P}(x,\D{y})\,f(y)
\df \Exp_{x}^{v^{*}}[f(X_{t})] \Bigr|_{t=1}\,,\quad x\in\RR^{d}
\end{equation*}
for all bounded functions $f\in\Cc(\RR^{d})$, and
\begin{equation*}
g_{r}(x) \df \Exp^{v^{*}}_{x}\left[
\int_{0}^{1}r\bigl(X_{s},v^{*}(X_{s})\bigr)\,\D{s}\right]\,,
\quad x\in\RR^{d}\,.
\end{equation*}
Then \eqref{E-HJB} translates into the discrete time Poisson equation:
\begin{equation}\label{E-deg2.1}
\widetilde{P}V^{*}(x) - V^{*}(x) = \varrho-g_{r}(x)\,,\quad x\in\RR^{d}\,.
\end{equation}
It easily follows from the near-monotone hypothesis
\eqref{E-nm} that there exists a constant $\varepsilon_{0}>0$
and a ball $B_{R_{0}}\subset\RR^{d}$, $R_{0}>0$, such that
$g_{r}(x) - \varrho>\varepsilon_{0}$ for all $x\in B_{R_{0}}^{c}$.
Since, in addition, $\int_{\RR^{d}} V^{*}(x)\imeas_{v^{*}}(\D{x})<\infty$,
it follows by \cite[Theorem~14.0.1]{Meyn-09} that
there exists a constant $\widetilde{m}$ such that
\begin{equation}\label{E-deg2.2}
\sum_{n=0}^{\infty} \babs{\widetilde{P}^{n}g_{r}(x)-\varrho}
\le \widetilde{m} (V^{*}(x)+1) \qquad \forall x\in\RR^{d}\,.
\end{equation}
By \eqref{E-deg2.1}--\eqref{E-deg2.2} we obtain
\begin{align}\label{E-deg2.3}
\widetilde{P}^{n}V^{*}(x) &= V^{*}(x) - \sum_{k=0}^{n-1}
(\widetilde{P}^{k}g_{r}(x)-\varrho)
\\[5pt]\nonumber
&\le (\widetilde{m}+1) (V^{*}(x)+1)\,.
\end{align}
By \eqref{E-Dynkinbound} and \eqref{E-deg2.3},
writing the arbitrary $t\in\RR_{+}$ as $t=n+\delta$ where $n$ is
the integer part of $t$ and using the Markov property, we obtain
\begin{align*}
\Exp^{v^{*}}_{x}\bigl[V^{*}(X_{t})\bigr] &= \Exp^{v^{*}}_{x}
\Bigl[\Exp^{v^{*}}_{X_{\delta}}\bigl[V^{*}(X_{t-\delta})\bigr] \Bigr]
\nonumber\\[5pt]
&= \Exp^{v^{*}}_{x}\bigl[\widetilde{P}^{n}V^{*}(X_{\delta})\bigr]
\nonumber\\[5pt]
&\le \Exp^{v^{*}}_{x}\bigl[(\widetilde{m}+1) (V^{*}(X_{\delta})+1)\bigr]
\nonumber\\[5pt]
&\le (\widetilde{m}+1)\,( V^{*}(x)+ \varrho\, \delta+1)
\nonumber\\[5pt]
&\le (\widetilde{m}+1)\,( V^{*}(x)+ \varrho+1)
\qquad\forall t\ge0\,,~\forall x\in\RR^{d}\,,
\end{align*}
thus establishing \eqref{E-deg2bound}.
\qquad
\end{proof}

\smallskip
\begin{definition}
We let $\Cc_{V^{*}}(\RR^{d})$ denote the Banach space of functions
$f\in\Cc(\RR^{d})$ with norm
\begin{equation*}
\norm{f}_{V^{*}} \df
\sup_{x\in\RR^{d}}\;\frac{\abs{f(x)}}{V^{*}(x)}\,.
\end{equation*}
We also define
\begin{equation*}
\order_{V^{*}} \df
\bigl\{f\in\Cc_{V^{*}}(\RR^{d})\cap\Cc^{2}(\RR^{d}) : f\ge0\bigr\}\,.
\end{equation*}
\end{definition}

\subsection{The relative value iteration}\label{S-VIRVI}

The RVI and VI equations in \eqref{E-RVI} and \eqref{E-VI}
can also be written in the form
\begin{align}
\partial_{t}{\RVI}(t,x)
&= \min_{u\in\Act}\; \bigl[\Lg^{u} {\RVI}(t,x) +r(x,u)\bigr]
- {\RVI}(t,0)\,, &{\RVI}(0,x)={\RVI}_{0}(x)\,,\label{ERVI}\\[5pt]
\partial_{t} {\VI}(t,x)
&= \min_{u\in\Act}\; \bigl[\Lg^{u} {\VI}(t,x) +r(x,u)\bigr]
- \varrho\,, &{\VI}(0,x)={\RVI}_{0}(x)\,.\label{EVI}
\end{align}

\begin{definition}\label{D-minimizer}
Let $\Hv=\{\Hv_{t}\,,\ t\in\RR_{+}\}$ denote a measurable selector from the minimizer
in \eqref{EVI} corresponding to a solution ${\VI}\in\Ccl^{1,2}(\RR^{d})$.
This is also a measurable selector from the minimizer
in \eqref{ERVI}, provided  ${\VI}$ and ${\RVI}$ are related
by \eqref{E-rvivi} and \eqref{E-virvi}, and vice-versa.
Note that the Markov control associated with
$\Hv$ is computed `backward' in time (see \eqref{E-VISR}).
Hence, for each $t\ge0$ we define the (nonstationary) Markov control
\begin{equation*}
\Hv^{t}\df\bigl\{\Hv^{t}_{s}=\Hv_{t-s}\,,\; s\in[0,t]\bigr\}\,.
\end{equation*}
Also, we adopt the simplifying notation
\begin{equation*}
\Or(x,u) \df r(x,u) - \varrho\,.
\end{equation*}
\end{definition}

In most of the statements of intermediary results
the initial data ${\RVI}_{0}$ is assumed without loss of generality
to be nonnegative.
We start with a theorem that proves the existence of a
solution to \eqref{EVI} that admits the stochastic
representation in \eqref{E-VISR}.
This does not require Assumption~\ref{A3.2}.

First we need the following definition.

\begin{definition}
We define $\RR^{d}_{T}\df(0,T)\times\RR^{d}$,
and let $\overline{\RR^{d}_{T}}$ denote its closure.
We also let $\Cc^{T}_{V^{*}}(\RR^{d})$ denote the Banach space of functions
in $\Cc(\overline{\RR^{d}_{T}})$ with norm
\begin{equation*}
\norm{f}_{V^{*},T} \df \sup_{(t,x)\in \overline{\RR^{d}_{T}}}\;
\frac{\abs{f(t,x)}}{V^{*}(x)}\,.
\end{equation*}
\end{definition}

\begin{theorem}\label{T-canonical}
Provided ${\RVI}_{0}\in\order_{V^{*}}$, then
\begin{subequations}
\begin{align}
{\VI}(t,x) &= \inf_{U\in\Uadm}\;\Exp^{U}_{x}
\left[\int_{0}^{t}\Or(X_{s},U_{s})\,\D{s}
+{\RVI}_{0}(X_{t})\right]\,,\label{E-SRa}
\intertext{is the minimal solution of \eqref{EVI} in
$\Ccl^{1,2}\bigl((0,\infty)\times \RR^{d}\bigr)
\cap\Cc\bigl([0,\infty)\times\RR^{d}\bigr)$ which is bounded
below on $\RR^{d}_{T}$, for any $T>0$.
With $\Hv^{t}$ as defined in
Definition~\ref{D-minimizer}, it admits the representation}
{\VI}(t,x) &= \Exp^{\Hv^{t}}_{x}
\left[\int_{0}^{t}\Or\bigl(X_{s},\Hv^{t}_{s}(X_{s})\bigr)\,\D{s}
+{\RVI}_{0}(X_{t})\right]\,,\label{E-SRb}
\end{align}
\end{subequations}
and it holds that
\begin{equation}\label{E-SRc}
\Exp_{x}^{\Hv^{t}}\bigl[{\VI}(t-\uptau_{R},X_{\uptau_{R}})\,
\Ind\{\uptau_{R}<t\}\bigr]\xrightarrow[R\to\infty]{}0
\end{equation}
for all $(t,x)\in\RR_{+}\times\RR^{d}$.
Moreover ${\VI}(t,\cdot\,)\ge-\varrho\,t$ and satisfies the estimate
\begin{equation}\label{E-Vest}
\norm{{\VI}}_{V^{*},T} \le (1+\varrho\,T)\;
\max\;\bigl(1,\norm{{\RVI}_{0}}_{V^{*}}\bigr)
\qquad \forall T>0\,.
\end{equation}
\end{theorem}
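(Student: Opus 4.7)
My plan is (i)~to derive the a~priori estimate \eqref{E-Vest} directly from the stochastic expression in \eqref{E-SRa}, (ii)~to construct a classical solution by solving Cauchy--Dirichlet problems on expanding balls and passing to the limit with interior parabolic estimates, and (iii)~to identify this limit with \eqref{E-SRa} while simultaneously extracting \eqref{E-SRb} and \eqref{E-SRc} via It\^o--Krylov applied to the time-reversed solution. The argument uses only the existence of $V^{*}$ and $v^{*}$ from Theorem~\ref{T3.1}, not Assumption~\ref{A3.2}. The lower bound $\VI(t,\cdot)\ge-\varrho t$ follows at once from $r\ge 0$ and $\RVI_{0}\ge 0$. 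For the upper bound, evaluating \eqref{E-SRa} at the suboptimal control $v^{*}$ and combining Dynkin's formula with the HJB identity $\Lg^{v^{*}}V^{*}=\varrho-r(\cdot,v^{*})$ yields $\Exp_{x}^{v^{*}}\!\int_{0}^{t}\Or\bigl(X_{s},v^{*}(X_{s})\bigr)\,\D{s}=V^{*}(x)-\Exp_{x}^{v^{*}}[V^{*}(X_{t})]$, while the same HJB together with $r\ge 0$ gives $\Exp_{x}^{v^{*}}[V^{*}(X_{t})]\le V^{*}(x)+\varrho t$. Plugging in $\RVI_{0}\le\norm{\RVI_{0}}_{V^{*}}V^{*}$, using $V^{*}\ge 1$, and treating separately the cases $\norm{\RVI_{0}}_{V^{*}}\le 1$ and $\norm{\RVI_{0}}_{V^{*}}>1$ produces \eqref{E-Vest}.

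For existence I would solve \eqref{EVI} on $(0,T)\times B_{R}$ with boundary and initial data $\RVI_{0}$. Fully nonlinear parabolic theory (cf.\ \cite{Krylov-08}) supplies a unique solution $\VI_{R}\in\Cc([0,T]\times\overline{B_{R}})\cap\Ccl^{1,2}((0,T)\times B_{R})$ with the Feynman--Kac representation $\VI_{R}(t,x)=\inf_{U}\Exp_{x}^{U}\bigl[\int_{0}^{t\wedge\uptau_{R}}\Or\,\D{s}+\RVI_{0}(X_{t\wedge\uptau_{R}})\bigr]$. The family $\{\VI_{R}\}_{R>0}$ inherits the growth bound of the first paragraph uniformly in $R$, and interior $W^{1,2;p}_{\mathrm{loc}}$ estimates deliver precompactness on every fixed cylinder. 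A diagonal extraction then produces $\VI\in\Ccl^{1,2}$ solving \eqref{EVI} classically. To identify $\VI$ with the infimum in \eqref{E-SRa} I would combine two inequalities: one direction by passing $R\to\infty$ in the truncated Feynman--Kac representation (using monotone convergence, available since $r,\RVI_{0}\ge 0$), the other by applying It\^o--Krylov to the time-reversed solution $(s,x)\mapsto\VI(t-s,x)$ along the measurable selector $\Hv^{t}$, where the HJB inequality is saturated to give $\VI(t,x)=\Exp_{x}^{\Hv^{t}}\bigl[\int_{0}^{t\wedge\uptau_{R}}\Or\,\D{s}+\VI(t-t\wedge\uptau_{R},X_{t\wedge\uptau_{R}})\bigr]$.

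The simultaneous proof of \eqref{E-SRb} and \eqref{E-SRc} then hinges on a squeeze applied to the residual $\Exp_{x}^{\Hv^{t}}[\VI(t-\uptau_{R},X_{\uptau_{R}})\Ind\{\uptau_{R}<t\}]$: the lower bound $\VI\ge-\varrho t$ forces this residual to be at least $-\varrho T\,\Prob(\uptau_{R}<t)$, which tends to $0$ by non-explosiveness of the controlled diffusion under (A1)--(A3), while the already established inequality $\VI(t,x)\le\Exp_{x}^{\Hv^{t}}[\int_{0}^{t}\Or\,\D{s}+\RVI_{0}(X_{t})]$ forces its $R\to\infty$ limit to be at most $0$. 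Hence the residual vanishes---this is \eqref{E-SRc}---and $\Hv^{t}$ attains the infimum in \eqref{E-SRa}, which is \eqref{E-SRb}. Minimality follows from a parallel It\^o--Krylov argument: any other $\Ccl^{1,2}$ solution $\widetilde{\VI}$ bounded below on $\RR^{d}_{T}$ satisfies, using the HJB inequality along an arbitrary admissible $U$, $\widetilde{\VI}(t,x)\le\Exp_{x}^{U}\bigl[\int_{0}^{t\wedge\uptau_{R}}\Or\,\D{s}+\widetilde{\VI}(t-t\wedge\uptau_{R},X_{t\wedge\uptau_{R}})\bigr]$, and passing $R\to\infty$ via Fatou and the lower bound on $\widetilde{\VI}$ delivers $\widetilde{\VI}\le\VI$. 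The principal technical obstacle I anticipate is the coordinated $R\to\infty$ passage linking PDE and stochastic sides; the single tool overcoming it is the uniform growth bound \eqref{E-Vest}, which supplies both the interior parabolic domination needed for the PDE limit and the quantitative tail control driving the squeeze.
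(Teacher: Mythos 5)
Your overall architecture---Cauchy--Dirichlet problems on expanding balls, interior parabolic estimates and a diagonal extraction, identification of the limit with the infimum by a two-sided stochastic argument, and a squeeze on the exit residual to get \eqref{E-SRb} and \eqref{E-SRc} simultaneously---is the same as the paper's, and your treatment of the a~priori bound \eqref{E-Vest}, of the lower-bound direction (Fatou along $\Hv^{t}$), of minimality, and of \eqref{E-SRc} is sound. The gap is in the upper-bound direction of the identification, and it is precisely the point where the paper does something you omit. From $\VI_{R}(t,x)\le\Exp_{x}^{U}\bigl[\int_{0}^{t\wedge\uptau_{R}}\Or(X_{s},U_{s})\,\D{s}+\RVI_{0}(X_{t\wedge\uptau_{R}})\bigr]$ you propose to let $R\to\infty$ ``by monotone convergence, available since $r,\RVI_{0}\ge0$.'' The running-cost integral is indeed monotone in $R$, but the terminal term is not: it splits as $\Exp_{x}^{U}\bigl[\RVI_{0}(X_{t})\,\Ind\{\uptau_{R}\ge t\}\bigr]+\Exp_{x}^{U}\bigl[\RVI_{0}(X_{\uptau_{R}})\,\Ind\{\uptau_{R}<t\}\bigr]$, and the second (nonnegative) piece need not vanish as $R\to\infty$ when $\RVI_{0}$ is unbounded---which the hypothesis $\RVI_{0}\in\order_{V^{*}}$ permits, and which is actually needed later, since the uniqueness theorem invokes Theorem~\ref{T-canonical} with the unbounded data $\RVI_{0}+\varepsilon V^{*}$. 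For an arbitrary admissible $U$, (A1)--(A3) provide no uniform-integrability control on $V^{*}(X_{\uptau_{R}})\,\Ind\{\uptau_{R}<t\}$ (the vanishing of such boundary terms cited from \cite{book} in \S\ref{S-growth} is specific to the control $v^{*}$). Without it you only obtain $\VI(t,x)\le\Exp_{x}^{U}\bigl[\int_{0}^{t}\Or\,\D{s}+\RVI_{0}(X_{t})\bigr]+\limsup_{R\to\infty}\Exp_{x}^{U}\bigl[\RVI_{0}(X_{\uptau_{R}})\,\Ind\{\uptau_{R}<t\}\bigr]$, which is too weak to conclude \eqref{E-SRa}.

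The paper closes this hole with a second layer of approximation that your plan skips: it replaces $r$ and $\RVI_{0}$ by bounded truncations $r^{n}$, $\RVI^{n}_{0}$ with $\norm{r^{n}}_{\infty}\le n$ and $\norm{\RVI^{n}_{0}}_{\infty}\le n$, so that the corresponding solution satisfies $\norm{\widehat{\RVI}_{n}(t,\cdot\,)}_{\infty}\le n(t+1)$ and the exit contribution is dominated by a constant times $\Prob_{x}^{U}(\uptau_{R}\le t)\to0$; the upper bound for the original data is then recovered by letting $n\to\infty$, where monotone convergence genuinely applies because $r^{n}\uparrow r$ and $\RVI^{n}_{0}\uparrow\RVI_{0}$ \emph{inside} the expectation. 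As written, your proof is complete only for bounded $\RVI_{0}$; to cover all of $\order_{V^{*}}$ you must either add this double truncation or supply a separate argument that the lateral boundary term vanishes uniformly over admissible controls, which is not available under the standing assumptions alone.
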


\begin{proof}
Let $r^{n}$ and ${\RVI}^{n}_{0}$, for $n\in\NN$,
be smooth truncations of $r$ and ${\RVI}_{0}$, respectively,
satisfying $\norm{r^{n}}_\infty\le n$ and $\norm{{\RVI}^{n}_{0}}_\infty\le n$
and such that
$r^{n}\uparrow r$ and ${\RVI}^{n}_{0}\uparrow{\RVI}_{0}$
as $n\to\infty$.
Let $\varrho_{n}$ denote the optimal ergodic
cost corresponding to $r^{n}$.
The boundary value problem
\begin{equation}\label{E-Dir1}
\begin{gathered}
\partial_{t} {\widehat{\RVI}}^{R}_{n}(t,x)
= \min_{u\in\Act}\; \left[\Lg^{u} {\widehat{\RVI}}^{R}_{n}(t,x) +r(x,u)\right]
\qquad \text{in}\ (0,T)\times B_{R}\\[5pt]
{\widehat{\RVI}}^{R}_{n}(0,x)={\RVI}^{n}_{0}(x)
\quad\forall x\in\overline{B}_{R}\,,\qquad
{\widehat{\RVI}}^{R}_{n}(t,\cdot\,)\vert_{\partial B_{R}}= {\RVI}^{n}_{0}
\quad\forall t\in[0,T]\,,
\end{gathered}
\end{equation}
has a unique nonnegative solution in
$\Cc^{1,2}\bigl((0,T)\times B_{R}\bigr)
\cap\Cc\bigl([0,T]\times\overline{B}_{R}\bigr)$
for all $T>0$ and $R>0$.
This solution has the stochastic representation
\begin{equation}\label{E-Dir2}
{\widehat{\RVI}}^{R}_{n}(t,x)
= \inf_{U\in\Uadm}\;\Exp_{x}^{U}
\left[\int_{0}^{\uptau_{R}\wedge{t}}r^{n}(X_{s},U_{s})\,\D{s}
+{\RVI}^{n}_{0}(t-\uptau_{R}\wedge{t},X_{\uptau_{R}\wedge{t}})\right]\,.
\end{equation}
where, as defined in \S\ref{S-not},
$\uptau_{R}$ denotes the first exit time from the ball $B_{R}$.
By \eqref{E-Dir2} we obtain
\begin{align*}
{\widehat{\RVI}}^{R}_{n}(t,x)
&\le \Exp_{x}^{v^{*}}
\left[\int_{0}^{\uptau_{R}\wedge{t}}r^{n}\bigl(X_{s},v^{*}(X_{s})\bigr)\,\D{s}
+{\RVI}^{n}_{0}(t-\uptau_{R}\wedge{t},X_{\uptau_{R}\wedge{t}})\right]
\\[5pt]
&\le \max\;\bigl(1,\norm{{\RVI}_{0}}_{V^{*}}\bigr)
\Exp_{x}^{v^{*}}
\left[\int_{0}^{\uptau_{R}\wedge{t}}r\bigl(X_{s},v^{*}(X_{s})\bigr)\,\D{s}
+V^{*}(t-\uptau_{R}\wedge{t},X_{\uptau_{R}\wedge{t}})\right]
\\[5pt]
&\le  \max\;\bigl(1,\norm{{\RVI}_{0}}_{V^{*}}\bigr)
\bigl(V^{*}(x)+\varrho\,t\bigr)\,.
\end{align*}
Therefore by the interior estimates of solutions
of \eqref{E-Dir1} (see \cite[Theorem~5.1]{Lady})
the derivatives $\bigl\{D^{\alpha}\partial_{t}^{\ell}{\widehat{\RVI}}^{R}_{n}:
\abs{\alpha}+2\ell \le 2\,,~R>0\,,n\in\NN\bigl\}$ are
locally H\"older equicontinuous in $\RR^{d}_{T}$.
Thus passing to the limit as $R\to\infty$ along a subsequence
we obtain a nonnegative function
${\widehat{\RVI}}_{n}\in\Ccl^{1,2}\bigl(\RR^{d}_{T}\bigl)
\cap\,\Cc\bigl(\overline{\RR^{d}_{T}}\bigr)$,
for all $T>0$, which satisfies
\begin{equation}\label{E-Dir3}
\begin{aligned}
\partial_{t} {\widehat{\RVI}}_{n}(t,x)
&= \min_{u\in\Act}\; \left[\Lg^{u} {\widehat{\RVI}}_{n}(t,x) + r^{n}(x,u)\right]
\qquad \text{in}\ (0,\infty)\times\RR^{d} \\[5pt]
{\widehat{\RVI}}_{n}(0,x)
&={\RVI}^{n}_{0}(x)\qquad\forall x\in\RR^{d}\,.
\end{aligned}
\end{equation}

By using Dynkin's formula on the cylinder $[0,t]\times B_{R}$, we obtain
from \eqref{E-Dir3} that
\begin{equation}\label{E-SR1}
{\widehat{\RVI}}_{n}(t,x) =\inf_{U\in\Uadm}\;\Exp_{x}^{U}
\left[\int_{0}^{\uptau_{R}\wedge{t}}r^{n}(X_{s},U_{s})\,\D{s}
+{\widehat{\RVI}}_{n}(t-\uptau_{R}\wedge{t},X_{\uptau_{R}\wedge{t}})\right]\,,
\end{equation}
It follows by \eqref{E-Dir2}  that
$\norm{{\widehat{\RVI}}_{n}(t,\,\cdot)}_{\infty}\le n(t+1)$
for all $n\in\NN$ and $t\ge0$.
By \eqref{E-SR1} we have the inequality
\begin{align}\label{E-SR2}
{\widehat{\RVI}}_{n}(t,x) &\le \Exp_{x}^{U}
\left[\int_{0}^{\uptau_{R}\wedge{t}}r^{n}(X_{s},U_{s})\,\D{s}
+{\widehat{\RVI}}_{n}(t-\uptau_{R}\wedge{t},X_{\uptau_{R}\wedge{t}})\right]
\\[5pt]\notag
&\le \Exp_{x}^{U}
\left[\int_{0}^{\uptau_{R}\wedge{t}}r^{n}(X_{s},U_{s})\,\D{s}
+ {\RVI}^{n}_{0}(X_{t})\,\Ind\{\uptau_{R}>t\}\right]
+ n \Prob_{x}^{U}(\uptau_{R}\le t)
\end{align}
for all $U\in\Uadm$.
Taking limits as $R\to\infty$ in \eqref{E-SR2},
using dominated convergence,
we obtain
\begin{equation}\label{E-SR3}
{\widehat{\RVI}}_{n}(t,x) \le
\Exp_{x}^{U}
\left[\int_{0}^{t}r^{n}(X_{s},U_{s})\,\D{s}
+ {\RVI}^{n}_{0}(X_{t})\right]
\qquad U\in\Uadm\,.
\end{equation}

Note that
\begin{equation}\label{E-SR4}
0\le{\widehat{\RVI}}_{n}(t,x)\le \limsup_{R\to\infty}\;
{\widehat{\RVI}}^{R}_{n}(t,x)
\le \max\;\bigl(1,\norm{{\RVI}_{0}}_{V^{*}}\bigr)\bigl(V^{*}(x)+\varrho\,t\bigr)\,.
\end{equation}
Hence, as mentioned earlier,
the derivatives $\bigl\{D^{\alpha}\partial_{t}^{\ell}{\widehat{\RVI}}_{n}:
\abs{\alpha}+2\ell \le 2\,,~n\in\NN\bigl\}$
are locally H\"older equicontinuous in $(0,\infty)\times\RR^{d}$.
Also as shown in \cite[p.~119]{book} we have $\varrho_{n}\to\varrho$ as $n\to\infty$.
Let $\{k_{n}\}_{n\in\NN}\subset\NN$ be an arbitrary sequence.
Then there exists some subsequence $\{k'_{n}\}\subset\{k_{n}\}$
such that
${\widehat{\RVI}}_{k'_{n}}\to {\widehat{\RVI}}\in\Ccl^{1,2}\bigl(\RR^{d}_{T}\bigl)
\cap\,\Cc\bigl(\overline{\RR^{d}_{T}}\bigr)$,
for all $T>0$, and ${\widehat{\RVI}}$ satisfies
\begin{equation}\label{E-Dir4}
\begin{aligned}
\partial_{t} {\widehat{\RVI}}(t,x)
&= \min_{u\in\Act}\; \left[\Lg^{u} {\widehat{\RVI}}(t,x) + r(x,u)\right]
\qquad \text{in}\ (0,\infty)\times\RR^{d} \\[5pt]
{\widehat{\RVI}}(0,x)
&={\RVI}_{0}(x)\qquad\forall x\in\RR^{d}\,.
\end{aligned}
\end{equation}
Let $\Hv^{t}$ denote a stationary Markov control associated with
the minimizer in \eqref{E-Dir4} as in Definition~\ref{D-minimizer}.
By using Dynkin's formula on the cylinder $[0,t]\times B_{R}$, we obtain
from \eqref{E-Dir4}
\begin{subequations}
\begin{align}
{\widehat{\RVI}}(t,x) &=\inf_{U\in\Uadm}\;\Exp_{x}^{U}
\left[\int_{0}^{\uptau_{R}\wedge{t}}r(X_{s},U_{s})\,\D{s}
+{\widehat{\RVI}}(t-\uptau_{R}\wedge{t},X_{\uptau_{R}\wedge{t}})\right]\,,
\label{E-SR5a}
\\[5pt]
{\widehat{\RVI}}(t,x) &= \Exp^{\Hv^{t}}_{x}
\left[\int_{0}^{\uptau_{R}\wedge{t}}
r\bigl(X_{s},\Hv^{t}_{s}(X_{s})\bigr)\,\D{s}
+{\widehat{\RVI}}(t-\uptau_{R}\wedge{t},X_{\uptau_{R}\wedge{t}})\right]\,.
\label{E-SR5b}
\end{align}
\end{subequations}
Since ${\widehat{\RVI}}(t,\cdot\,)$ is nonnegative,
letting $R\to\infty$ in \eqref{E-SR5b}, by Fatou's Lemma we obtain
\begin{align}\label{E-SR6}
{\widehat{\RVI}}(t,x) &\ge \Exp^{\Hv^{t}}_{x}
\left[\int_{0}^{t}r\bigl(X_{s},\Hv^{t}_{s}(X_{s})\bigr)\,\D{s}
+{\RVI}_{0}(X_{t})\right]
\\[5pt]\nonumber
&\ge \inf_{U\in\Uadm}\;\Exp^{U}_{x}
\left[\int_{0}^{t}r(X_{s},U_{s})\,\D{s}+{\RVI}_{0}(X_{t})\right]\,.
\end{align}

Taking limits as $n\to\infty$ in \eqref{E-SR3}, using monotone convergence
for the first term on the right hand side, we obtain
\begin{equation}\label{E-SR7}
{\widehat{\RVI}}(t,x)
\le \Exp_{x}^{U} \left[\int_{0}^{t}r(X_{s},U_{s})\,\D{s}
+{\RVI}_{0}(X_{t})\right]\qquad \forall U\in\Uadm\,.
\end{equation}
By \eqref{E-SR6}--\eqref{E-SR7}
we have
\begin{subequations}
\begin{align}
{\widehat{\RVI}}(t,x)
&= \inf_{U\in\Uadm}\;\Exp^{U}_{x}
\left[\int_{0}^{t}r(X_{s},U_{s})\,\D{s}+{\RVI}_{0}(X_{t})\right]\label{E-SR8a}
\\[5pt]
{\widehat{\RVI}}(t,x)&= \Exp^{\Hv^{t}}_{x}
\left[\int_{0}^{t}r\bigl(X_{s},\Hv^{t}_{s}(X_{s})\bigr)\,\D{s}
+{\RVI}_{0}(X_{t})\right]\label{E-SR8b}
\,.
\end{align}
\end{subequations}

Let ${\VI}(t,x) \df {\widehat{\RVI}}(t,x) - \varrho\,t$.
Then ${\VI}$ solves \eqref{EVI} and
\eqref{E-SRa}--\eqref{E-SRb} follow by \eqref{E-SR8a}--\eqref{E-SR8b}.
It is also clear that ${\VI}(t,x)\ge -\varrho\,t$, which together with \eqref{E-SR4}
implies \eqref{E-Vest}.

By \eqref{E-SR5a} we have
\begin{align}\label{E-SR9}
{\widehat{\RVI}}(t,x) &= \Exp_{x}^{\Hv^{t}}
\left[\int_{0}^{\uptau_{R}\wedge{t}}r\bigl(X_{s},\Hv^{t}(X_{s})\bigr)
\,\D{s}
+{\RVI}_{0}(t,X_{t})\,\Ind\{\uptau_{R}\ge t\}\right]
\\[5pt]\nonumber
&\mspace{250mu}
+ \Exp_{x}^{\Hv^{t}}\bigl[{\widehat{\RVI}}(t-\uptau_{R},X_{\uptau_{R}})\,
\Ind\{\uptau_{R}<t\}\bigr]\,.
\end{align}
The first term on the right hand side of \eqref{E-SR9} tends
to the right hand side of \eqref{E-SR8b} by monotone convergence as $R\uparrow\infty$.
Therefore \eqref{E-SRc} holds.

Suppose ${\widetilde{\RVI}}$ is a solution of
\eqref{E-Dir4} in
$\Ccl^{1,2}\bigl(\RR^{d}_{T}\bigl)
\cap\,\Cc\bigl(\overline{\RR^{d}_{T}}\bigr)$, for some $T>0$, which is bounded below,
and $\Tilde{v}^{t}$ is an associated stationary Markov
control from the minimizer of \eqref{E-Dir4}.
Applying Dynkin's formula on the cylinder
$[0,t]\times B_{R}$ and letting $R\to\infty$ using Fatou's lemma, we obtain
\begin{align*}
{\widetilde{\RVI}}(t,x)&\ge \Exp^{\Tilde{v}^{t}}_{x}
\left[\int_{0}^{t}r\bigl(X_{s},\Hv^{t}_{s}(X_{s})\bigr)\,\D{s}
+{\RVI}_{0}(X_{t})\right]
\\[5pt]
&\ge\inf_{U\in\Uadm}\;\Exp^{U}_{x}
\left[\int_{0}^{t}r(X_{s},U_{s})\,\D{s}+{\RVI}_{0}(X_{t})\right]
\\[5pt]
&\ge {\widehat{\RVI}}(t,x)\,.
\end{align*}
Therefore ${\VI}(t,x)$
is the minimal solution of \eqref{EVI}
in $\Ccl^{1,2}\bigl((0,\infty)\times \RR^{d}\bigr)
\cap\Cc\bigl([0,\infty)\times\RR^{d}\bigr)$ which is bounded
below on $\RR^{d}_{T}$, for each $T>0$.
\qquad
\end{proof}

In the interest of economy of language we refer to
the solution in \eqref{E-SRa} as canonical.
This is detailed in the following definition.

\begin{definition}\label{D-canonical}
Given an initial condition ${\RVI}_{0}\in\order_{V^{*}}$  we define
the \emph{canonical solution} to the VI in \eqref{EVI} as
the solution which was constructed in the proof of Theorem~\ref{T-canonical}
and was shown to admit the stochastic representation in \eqref{E-SRa}.
In other words, this is the minimal solution of \eqref{EVI} in
$\Ccl^{1,2}\bigl((0,\infty)\times \RR^{d}\bigr)
\cap\,\Cc\bigl([0,\infty)\times\RR^{d}\bigr)$ which is bounded
below on $\RR^{d}_{T}$, for any $T>0$.
The canonical solution to the VI
well defines the \emph{canonical solution} to the RVI in \eqref{ERVI}
via \eqref{E-virvi}.
\end{definition}

For the rest of the paper a solution to the RVI or VI
is always meant to be a canonical solution.
In summary, these are characterized by:
\begin{align}\label{E-SR}
{\RVI}(t,x) + \int_{0}^{t} {\RVI}(s,0)\,\D{s} &= \inf_{U\in\Uadm}\;\Exp^{U}_{x}
\left[\int_{0}^{t}r(X_{s},U_{s})\,\D{s} + {\RVI}_{0}(X_{t})\right]
\\[5pt]\nonumber
&= \int_{0}^{t}\Exp^{\Hv^{t}}_{x}
\bigl[r\bigl(X_{s},\Hv^{t}_{s}(X_{s})\bigr)\bigr]\,\D{s}
+ \Exp^{\Hv^{t}}_{x}\bigl[{\RVI}_{0}(X_{t})\bigr]\,.
\end{align}
Similarly
\begin{align*}
{\VI}(t,x) &= \inf_{U\in\Uadm}\;\Exp^{U}_{x}
\left[\int_{0}^{t}\Or(X_{s},U_{s})\,\D{s}
+{\RVI}_{0}(X_{t})\right]\nonumber\\[5pt]
&= \int_{0}^{t}\Exp^{\Hv^{t}}_{x}
\bigl[\Or\bigl(X_{s},\Hv^{t}_{s}(X_{s})\bigr)\bigr]\,\D{s}
+ \Exp^{\Hv^{t}}_{x}\bigl[{\RVI}_{0}(X_{t})\bigr]\,.
\end{align*}

The next lemma provides an important estimate for the canonical solutions of the
the VI.

\begin{lemma}
Provided ${\RVI}_{0}\in\Cc_{V^{*}}(\RR^{d})\cap\Cc^{2}(\RR^{d})$,
then the canonical solution ${\VI}\in\Ccl^{1,2}\bigl((0,\infty)\times\RR^{d}\bigr)
\cap\Cc\bigl([0,\infty)\times\RR^{d}\bigr)$ of \eqref{EVI} satisfies
the bound
\begin{equation}\label{E-bound}
\Exp_{x}^{\Hv^{t}}\bigl[{\RVI}_{0}(X_{t})-V^{*}(X_{t})\bigr]
\le {\VI}(t,x) - V^{*}(x)
\le \Exp_{x}^{v^{*}}\bigl[{\RVI}_{0}(X_{t})-V^{*}(X_{t})\bigr]
\end{equation}
for all $(t,x)\in\RR_{+}\times\RR^{d}$.
\end{lemma}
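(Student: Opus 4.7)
The plan is to derive both inequalities by combining It\^o's formula applied to $V^{*}$ with the stochastic representation \eqref{E-SRa}--\eqref{E-SRb} of the canonical solution and the HJB equation \eqref{E-HJB} in the form
\begin{equation*}
\Lg^{u}V^{*}(x) + r(x,u) \ge \varrho\quad\forall u\in\Act,\qquad
\Lg^{v^{*}}V^{*}(x) + r\bigl(x,v^{*}(x)\bigr) = \varrho\,,
\end{equation*}
which by the definition $\Or(x,u)=r(x,u)-\varrho$ read $\Lg^{u}V^{*}\ge -\Or(\cdot,u)$ with equality when $u=v^{*}(x)$.

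For the \emph{upper bound}, I would use $v^{*}$ as a suboptimal control in \eqref{E-SRa}. Apply Dynkin's formula (via Krylov's extension to $V^{*}\in\Sobl^{2,p}$) to $V^{*}(X_{t\wedge\uptau_{R}})$ under $\Prob_{x}^{v^{*}}$ to obtain
\begin{equation*}
\Exp_{x}^{v^{*}}\bigl[V^{*}(X_{t\wedge\uptau_{R}})\bigr]
= V^{*}(x) - \Exp_{x}^{v^{*}}\left[\int_{0}^{t\wedge\uptau_{R}}\Or\bigl(X_{s},v^{*}(X_{s})\bigr)\,\D{s}\right].
\end{equation*}
Since $r\ge 0$ on the left-hand side and by the uniform-in-$R$ control of $\Exp^{v^{*}}_{x}[V^{*}(X_{t\wedge\uptau_{R}})]$ (Lemma~\ref{L3.4} together with $V^{*}\ge 1$), one may pass to the limit $R\to\infty$ using monotone convergence on the integral and Fatou combined with the upper bound \eqref{E-Dynkinbound} on the boundary term to conclude
\begin{equation*}
\Exp_{x}^{v^{*}}\!\left[\int_{0}^{t}\Or\bigl(X_{s},v^{*}(X_{s})\bigr)\,\D{s}\right]
= V^{*}(x)-\Exp_{x}^{v^{*}}\bigl[V^{*}(X_{t})\bigr].
\end{equation*}
Substituting into \eqref{E-SRa} with $U=v^{*}$ and rearranging gives ${\VI}(t,x)-V^{*}(x)\le \Exp_{x}^{v^{*}}[{\RVI}_{0}(X_{t})-V^{*}(X_{t})]$.

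For the \emph{lower bound}, repeat the same Dynkin argument but now under $\Prob_{x}^{\Hv^{t}}$, using the HJB \emph{inequality} $\Lg^{\Hv^{t}_{s}(x)}V^{*}(x)\ge -\Or(x,\Hv^{t}_{s}(x))$. Stopping at $t\wedge\uptau_{R}$ yields
\begin{equation*}
\Exp_{x}^{\Hv^{t}}\bigl[V^{*}(X_{t\wedge\uptau_{R}})\bigr] - V^{*}(x) \ge -\,\Exp_{x}^{\Hv^{t}}\!\left[\int_{0}^{t\wedge\uptau_{R}}\Or\bigl(X_{s},\Hv^{t}_{s}(X_{s})\bigr)\,\D{s}\right].
\end{equation*}
Passing $R\to\infty$, the integral of $r\ge0$ on the right converges monotonically (as already established in the proof of Theorem~\ref{T-canonical}), while for the boundary term $\Exp_{x}^{\Hv^{t}}[V^{*}(X_{t\wedge\uptau_{R}})]$ one invokes Fatou's lemma together with \eqref{E-SRc} applied to $V^{*}$ (treating $V^{*}$ as an admissible terminal cost via its HJB bound) to exchange limit and expectation. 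Combining the resulting inequality with the representation \eqref{E-SRb} of ${\VI}(t,x)$ in terms of $\Hv^{t}$ yields ${\VI}(t,x)-V^{*}(x)\ge \Exp_{x}^{\Hv^{t}}[{\RVI}_{0}(X_{t})-V^{*}(X_{t})]$.

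The main obstacle is purely technical: justifying the interchange of limits as $R\to\infty$ in the Dynkin identity along $\uptau_{R}$, since $V^{*}$ is unbounded and $\Hv^{t}$ need not be stable. This is handled by the one-sided sign $r\ge 0$ (giving monotone convergence for the integral term) and the growth estimate \eqref{E-deg2bound} together with \eqref{E-SRc} (giving the needed uniform integrability on the exit boundary). Once these limits are secured, the two inequalities \eqref{E-bound} follow directly.
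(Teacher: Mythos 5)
Your proof is in substance the paper's proof: the paper applies It\^o's formula to $s\mapsto V^{*}(X_{s})-{\VI}(t-s,X_{s})$ under $\Prob^{v^{*}}_{x}$ and under $\Prob^{\Hv^{t}}_{x}$, using the two differential inequalities $-\partial_{t}(V^{*}-{\VI})+\Lg^{v^{*}}(V^{*}-{\VI})\le 0$ and $-\partial_{t}(V^{*}-{\VI})+\Lg^{\Hv^{t}}(V^{*}-{\VI})\ge 0$ obtained from \eqref{E-HJB} and \eqref{EVI}; your decomposition into the stochastic representation \eqref{E-SRa}--\eqref{E-SRb} plus a Dynkin identity for $V^{*}$ alone is the same computation, and you are in fact more explicit about localization than the paper, which does not discuss it. The upper bound is fully justified as you present it: Fatou on the boundary term gives exactly the inequality $\Exp^{v^{*}}_{x}\bigl[\int_{0}^{t}\Or\,\D{s}\bigr]\le V^{*}(x)-\Exp^{v^{*}}_{x}[V^{*}(X_{t})]$ that is needed, and \eqref{E-Dynkinbound} makes all expectations finite.

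One caveat on the lower bound: the step you justify by ``\eqref{E-SRc} applied to $V^{*}$'' does not follow from \eqref{E-SRc}. To pass to the limit you need $\Exp^{\Hv^{t}}_{x}\bigl[V^{*}(X_{\uptau_{R}})\,\Ind\{\uptau_{R}\le t\}\bigr]\to 0$; Fatou alone gives $\liminf_{R}\Exp[V^{*}(X_{t\wedge\uptau_{R}})]\ge\Exp[V^{*}(X_{t})]$, which is the wrong direction here, and \eqref{E-SRc} controls ${\VI}$, not $V^{*}$, on the exit boundary --- there is no bound of the form $V^{*}\le{\VI}+C$ (the available estimate \eqref{E-Vest} goes the other way), so vanishing of the exit term for ${\VI}$ does not transfer to $V^{*}$. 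The fact you need is the analogue, under the nonstationary control $\Hv^{t}$, of the statement $\Exp^{v^{*}}_{x}\bigl[V^{*}(X_{\uptau_{R}})\,\Ind\{\uptau_{R}\le t\}\bigr]\to 0$ that the paper later quotes from \cite[Corollary~3.7.3]{book}, and it requires a separate (if standard) argument. To be fair, the paper's own one-line proof is silent on this localization issue as well, so this is a defect of justification rather than of approach.
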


\begin{proof}
By \eqref{E-HJB} and \eqref{EVI} we obtain
\begin{subequations}
\begin{align*}
-\partial_{t}(V^{*}-{\VI}) + \Lg^{v^{*}}(V^{*}-{\VI})&\le 0
\intertext{and}
-\partial_{t}(V^{*}-{\VI}) + \Lg^{\Hv^{t}}(V^{*}-{\VI})&\ge 0
\end{align*}
\end{subequations}
from which, by an application of It\^o's formula to
$V^{*}(X_{s}) - {\VI}(t-s, X_{s})$, $s\in[0,t]$, it follows that
\begin{subequations}
\begin{align*}
\Exp_{x}^{v^{*}}\bigl[V^{*}(X_{t}) -{\RVI}_{0}(X_{t})\bigr]
&\le V^{*}(x) - {\VI}(t,x)
\intertext{and}
\Exp_{x}^{\Hv^{t}}\bigl[V^{*}(X_{t}) -{\RVI}_{0}(X_{t})\bigr]
&\ge V^{*}(x) - {\VI}(t,x)\,,
\end{align*}
\end{subequations}
respectively, and the estimate follows.
\qquad
\end{proof}

Concerning the uniqueness of the canonical solution in a larger class of
functions, this depends on the growth of $V^{*}$ and the coefficients
of the SDE in \eqref{E-sde}.
Various such uniqueness results can be given based on different
hypotheses on the growth of the data.
The following result assumes that $V^{*}$ has polynomial growth,
which is the case in many applications.

\begin{theorem}
Let ${\RVI}_{0}\in\order_{V^{*}}$ and suppose that for some
constants $c_{1}$, $c_{2}$ and $m>0$, $V^{*}(x)\le c_{1} + c_{2}\abs{x}^{m}$.
Then any solution ${\VI}'\in\Ccl^{1,2}\bigl(\RR^{d}_{T}\bigl)\cap\,
\Cc\bigl(\overline{\RR^{d}_{T}}\bigr)$ of \eqref{EVI}, for some $T>0$,
 which is bounded below in $\overline{\RR^{d}_{T}}$ and satisfies
$\norm{{\VI}'}_{V^{*},T}<\infty$ agrees with the canonical solution
${\VI}$ on $\overline{\RR^{d}_{T}}$.
\end{theorem}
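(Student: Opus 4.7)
The plan is to sandwich $\VI'$ against $\VI$ from both sides. The lower inequality $\VI\le\VI'$ on $\overline{\RR^{d}_{T}}$ is essentially free: by Definition~\ref{D-canonical} and Theorem~\ref{T-canonical}, $\VI$ is the minimal element of the class of solutions of \eqref{EVI} in $\Ccl^{1,2}((0,\infty)\times\RR^{d}) \cap \Cc([0,\infty)\times\RR^{d})$ that are bounded below on $\RR^{d}_{T}$, and $\VI'$ lies in this class by hypothesis. So the substantive work is the reverse inequality $\VI'\le\VI$.

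For the upper bound I would fix $(t,x)\in\overline{\RR^{d}_{T}}$ and an arbitrary admissible control $U\in\Uadm$, and apply It\^o's formula to $s\mapsto \VI'(t-s, X_{s})$ stopped at $t\wedge\uptau_{R}$. Since $\VI'\in\Cc^{1,2}$ on $(0,T)\times\overline{B}_{R}$, the stochastic integral piece is a true martingale. From \eqref{EVI} we have the pointwise inequality $-\partial_{t}\VI'(t-s,X_{s}) + \Lg^{U_{s}}\VI'(t-s,X_{s}) \ge \varrho - r(X_{s},U_{s})$, so after taking expectations I expect to obtain
\begin{equation*}
\VI'(t,x) \;\le\; \Exp_{x}^{U}\left[\int_{0}^{t\wedge\uptau_{R}} \Or(X_{s},U_{s})\,\D s + \VI'\bigl(t - t\wedge\uptau_{R},\,X_{t\wedge\uptau_{R}}\bigr)\right].
\end{equation*}

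The crux is passing to the limit $R\to\infty$ in the boundary term. Splitting according to $\{\uptau_{R}\ge t\}$ versus $\{\uptau_{R}<t\}$, the first piece converges to $\Exp_{x}^{U}[\RVI_{0}(X_{t})]$ by dominated convergence, using $\abs{\RVI_{0}}\le \norm{\RVI_{0}}_{V^{*}}V^{*}$, the polynomial bound on $V^{*}$, and the standard SDE moment estimate $\Exp_{x}^{U}[\sup_{s\le T}\abs{X_{s}}^{p}] \le C_{p}(1+\abs{x}^{p})$ available from (A2) by Gronwall's inequality, uniformly in $U\in\Uadm$. For the second piece, on $\{\uptau_{R}<t\}$ we have $\abs{X_{\uptau_{R}}}=R$ and hence $V^{*}(X_{\uptau_{R}})\le c_{1}+c_{2}R^{m}$, giving
\begin{equation*}
\babs{\Exp_{x}^{U}\bigl[\VI'(t-\uptau_{R},X_{\uptau_{R}})\,\Ind\{\uptau_{R}<t\}\bigr]} \;\le\; \norm{\VI'}_{V^{*},T}\,(c_{1}+c_{2}R^{m})\,\Prob_{x}^{U}(\uptau_{R}<t),
\end{equation*}
while Chebyshev combined with the same moment estimate (for any $p>m$) gives $\Prob_{x}^{U}(\uptau_{R}<t) = O(R^{-p})$, so the piece is $O(R^{m-p})\to 0$. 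The cost integral passes to the limit by monotone convergence of the $r$ contribution and dominated convergence of the $\varrho$ contribution.

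Combining these limits yields $\VI'(t,x) \le \Exp_{x}^{U}[\int_{0}^{t}\Or(X_{s},U_{s})\,\D s + \RVI_{0}(X_{t})]$ for every $U\in\Uadm$; taking the infimum and invoking \eqref{E-SRa} gives $\VI'(t,x)\le\VI(t,x)$. The main obstacle is precisely the vanishing of the boundary contribution on $\{\uptau_{R}<t\}$, and this is where the polynomial-growth hypothesis on $V^{*}$ does the essential work: the weighted bound $\norm{\VI'}_{V^{*},T}<\infty$ only controls $\VI'$ in terms of $V^{*}$, so we need $V^{*}$ to grow no faster than a power of $\abs{x}$ for the SDE tail decay to dominate. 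Any strengthening of the growth hypothesis on $V^{*}$ can be accommodated by a matching strengthening of the SDE moment bound.
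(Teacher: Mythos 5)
Your argument is correct, but it reaches the key inequality $\VI'\le\VI$ by a genuinely different route than the paper. The paper perturbs the initial data to $\RVI_{0}+\varepsilon V^{*}$, observes that the resulting canonical solution satisfies $\VI_{\varepsilon}\ge\varepsilon V^{*}-\varrho\,t$, and then transfers the property \eqref{E-SRc} of canonical solutions (under the near-optimal controls $\Hv_{\varepsilon}^{t}$) to conclude that the boundary terms $\Exp_{x}^{\Hv_{\varepsilon}^{t}}\bigl[\VI'(t-\uptau_{R},X_{\uptau_{R}})\,\Ind\{\uptau_{R}<t\}\bigr]$ vanish because $\norm{\VI'}_{V^{*},T}<\infty$; this yields $\VI'\le\VI_{\varepsilon}$, and the polynomial growth of $V^{*}$ enters only at the final step, through the uniform bound $\Exp_{x}^{U}[V^{*}(X_{t})]\le m(x,T)$, to let $\varepsilon\downarrow0$ and conclude $\VI_{\varepsilon}\to\VI$. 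You instead work with an arbitrary admissible $U$ and kill the boundary term on $\{\uptau_{R}<t\}$ directly, trading the growth bound $\norm{\VI'}_{V^{*},T}\,(c_{1}+c_{2}R^{m})$ against the tail estimate $\Prob_{x}^{U}(\uptau_{R}<t)\le C_{p}(1+\abs{x}^{p})R^{-p}$ with $p>m$, which follows from (A2) uniformly in $U$; then the infimum over $U$ and \eqref{E-SRa} give $\VI'\le\VI$. Your version is the more elementary and makes completely transparent where the polynomial-growth hypothesis does its work; the paper's detour through $\VI_{\varepsilon}$ packages the boundary control into a structural property of canonical solutions rather than a tail computation, but since its $\varepsilon\downarrow0$ step still needs the polynomial growth, neither argument is strictly more general. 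Both use the minimality of the canonical solution for the converse inequality $\VI\le\VI'$. The only point you should make explicit is the routine regularization near the initial time: $\VI'$ is $\Ccl^{1,2}$ only on the open cylinder, so It\^o's formula is applied on $[0,t-\delta]$ and one lets $\delta\downarrow0$ using continuity up to $\{0\}\times\RR^{d}$ --- a step the paper suppresses as well.
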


\begin{proof}
Let ${\VI}'$ be a solution satisfying the hypothesis in the theorem,
and let ${\VI}$ be the canonical solution of \eqref{EVI} and $\Hv^{t}$ the associated
Markov control as in Definition~\ref{D-minimizer}.
Let ${\VI}_{\varepsilon}$, for $\varepsilon>0$,
denote the canonical solution of \eqref{EVI} with initial
data ${\RVI}_{0} + \varepsilon V^{*}$ and $\Hv_{\varepsilon}$
the associated minimizer.
By Theorem~\ref{T-canonical} for each $\varepsilon>0$ we obtain
\begin{align*}
{\VI}_{\varepsilon}(t,x)
&= \inf_{U\in\Uadm}\;\Exp^{U}_{x}
\left[\int_{0}^{t}\Or\bigl(X_{s},U_{s}\bigr)\,\D{s}
+{\RVI}_{0}(X_{t}) + \varepsilon V^{*}(X_{t})\right]
\\[5pt]
&\ge -\varrho\,t + \varepsilon
\inf_{U\in\Uadm}\;\Exp^{U}_{x}
\left[\int_{0}^{t}\Or\bigl(X_{s},U_{s}\bigr)\,\D{s} + V^{*}(X_{t})\right]
\\[5pt]
&\ge \varepsilon V^{*}(x) -\varrho\,t\,.
\end{align*}
Therefore by \eqref{E-SRc} for each $\varepsilon>0$, we have
\begin{equation*}
\Exp_{x}^{\Hv_{\varepsilon}^{t}}\bigl[V^{*}(t-\uptau_{R},X_{\uptau_{R}})\,
\Ind\{\uptau_{R}<t\}\bigr]\xrightarrow[R\to\infty]{}0\qquad \forall
(t,x)\in\RR^{d}_{T}\,,
\end{equation*}
which in turn implies, since $\norm{{\VI}'}_{V^{*},T}<\infty$, that
\begin{equation}\label{E-unq0a}
\Exp_{x}^{\Hv_{\varepsilon}^{t}}\bigl[{\VI}'(t-\uptau_{R},X_{\uptau_{R}})\,
\Ind\{\uptau_{R}<t\}\bigr]\xrightarrow[R\to\infty]{}0\qquad \forall
(t,x)\in\RR^{d}_{T}\,.
\end{equation}
Since $-\partial_{t} {\VI}' +\Lg^{\Hv_{\varepsilon}^{t}}{\VI}'
+\Or(x,\Hv_{\varepsilon,t}(x)\bigr)\ge0$,
we have that for all $(t,x)\in\RR^{d}_{T}$,
\begin{equation}\label{E-unq0b}
{\VI}'(t,x)\le \Exp_{x}^{\Hv_{\varepsilon}^{t}}
\left[\int_{0}^{\uptau_{R}\wedge{t}}
\Or\bigl(X_{s},\Hv_{\varepsilon,s}^{t}(X_{s})\bigr)\,\D{s}
+{\VI}'(t-\uptau_{R}\wedge{t},X_{\uptau_{R}\wedge{t}})\right]\,,
\end{equation}
and taking limits as $R\to\infty$ in \eqref{E-unq0b}, using \eqref{E-unq0a},
it follows that ${\VI}'\le{\VI}_{\varepsilon}$ on $\overline{\RR^{d}_{T}}$.

The polynomial growth of $V^{*}$ implies that there exists a constant
$m(x,T)$ such that $\Exp_{x}^{U}[V^{*}(X_{t})]\le m(x,T)$ for all
$(t,x)\in\RR^{d}_{T}$
and $U\in\Uadm$ \cite[Theorem~2.2.2]{book}.
Therefore, since
\begin{align}\label{E-unq0c}
{\VI}_{\varepsilon}(t,x)&\le
\Exp_{x}^{\Hv^{t}}
\left[\int_{0}^{t}
\Or\bigl(X_{s},\Hv_{s}^{t}(X_{s})\bigr)\,\D{s}
+{\RVI}_{0}(X_{t}) + \varepsilon V^{*}(X_{t})\right]
\\[5pt]\nonumber
&\le {\VI}(t,x) +\varepsilon m(x,T)\qquad \forall (t,x)\in\RR^{d}_{T}\,,
\end{align}
and ${\VI}_{\varepsilon}\ge{\VI}$,
it follows by \eqref{E-unq0c} that ${\VI}_{\varepsilon}\to{\VI}$
on $\overline{\RR^{d}_{T}}$ as $\varepsilon\downarrow0$.
Thus ${\VI}'\le{\VI}$ on $\overline{\RR^{d}_{T}}$, and
by the minimality of ${\VI}$ we must have equality.
\qquad
\end{proof}

We can also obtain a uniqueness result on a larger class of functions
that does not require $V^{*}$ to have polynomial growth, but assumes
that the diffusion matrix is bounded in $\RR^{d}$.
This is given in Theorem~\ref{T-unq} below, whose proof
uses the technique in \cite{Eidelman-00}.
 
We define the following class of functions:
\begin{equation*}
\mathfrak{G} \df \bigl\{f\in\Cc^{2}(\RR^{d}) : \lim_{\abs{x}\to\infty}\;
f(x)\,\E^{-k\abs{x}^{2}}= 0\,,~ \text{for some}~ k>0\bigr\}\,.
\end{equation*}

\begin{theorem}\label{T-unq}
Suppose $V^{*}\in\mathfrak{G}$ and that
$\norm{\upsigma}$ is bounded in $\RR^{d}$.
Then, provided ${\RVI}_{0}\in\order_{V^{*}}$,
there exists a unique solution $\psi$ to \eqref{EVI}
such that $\max_{t\in[0,T]}\;\psi(t,\cdot\,)\in\mathfrak{G}$
for each $T>0$.
\end{theorem}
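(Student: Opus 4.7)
The plan is a comparison-based uniqueness argument in the spirit of T\"acklind. First observe that the canonical solution ${\VI}$ from Theorem~\ref{T-canonical} already belongs to this uniqueness class: by \eqref{E-Vest} combined with $V^{*}\in\mathfrak{G}$, we have $\abs{{\VI}(t,x)}\E^{-k\abs{x}^{2}}\to 0$ as $\abs{x}\to\infty$ uniformly in $t\in[0,T]$, for any $T>0$. Thus the task reduces to showing that any other solution $\psi'$ of \eqref{EVI} on $\RR^{d}_{T}$ with $\max_{t\in[0,T]}\psi'(t,\cdot\,)\in\mathfrak{G}$ must coincide with ${\VI}$ on $\overline{\RR^{d}_{T}}$, for every $T>0$.

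Fix $T>0$ and set $w\df \psi'-{\VI}$. I would pick measurable selectors $\Hv^{t}$ and $\Hv'{}^{t}$ from the minimizers in \eqref{EVI} associated with ${\VI}$ and $\psi'$, respectively (see Definition~\ref{D-minimizer}). Since each solution satisfies \eqref{EVI} with equality under its own minimizer but is only a super-solution of the linear equation driven by the other minimizer, a standard min-max manipulation yields the differential inequalities $\partial_{t}w-\Lg^{\Hv^{t}}w\ge 0$ and $\partial_{t}w-\Lg^{\Hv'{}^{t}}w\le 0$ a.e. Applying It\^o's formula to $s\mapsto w(t-s,X_{s})$ under each of these Markov controls up to $\uptau_{R}\wedge t$, taking expectations, and using $w(0,\cdot\,)\equiv 0$, I obtain
\begin{equation*}
\Exp^{\Hv^{t}}_{x}\bigl[w(t-\uptau_{R},X_{\uptau_{R}})\Ind\{\uptau_{R}<t\}\bigr]\le w(t,x) \le \Exp^{\Hv'{}^{t}}_{x}\bigl[w(t-\uptau_{R},X_{\uptau_{R}})\Ind\{\uptau_{R}<t\}\bigr].
\end{equation*}
The problem therefore reduces to establishing the uniform vanishing
\begin{equation*}
\Exp^{U}_{x}\bigl[\abs{w(t-\uptau_{R},X_{\uptau_{R}})}\Ind\{\uptau_{R}<t\}\bigr]\xrightarrow[R\to\infty]{}0 \qquad\forall U\in\Uadm,~(t,x)\in\RR^{d}_{T}.
\end{equation*}

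The hypothesis furnishes $k>0$ and $M<\infty$ with $\abs{w(s,y)}\le M\E^{k\abs{y}^{2}}$ on $\overline{\RR^{d}_{T}}$; since $\abs{X_{\uptau_{R}}}=R$, the vanishing reduces to the Gaussian exit estimate $\E^{kR^{2}}\Prob^{U}_{x}(\uptau_{R}<t)\to 0$. Following the technique of \cite{Eidelman-00}, I would construct a T\"acklind-type barrier. Pick $\lambda_{0}>k$ and constants $C_{0},\mu>0$, and set
\begin{equation*}
Z_{s}\df \exp\!\Bigl\{\tfrac{\lambda_{0}}{1+C_{0}s}\abs{X_{s}}^{2}-\mu s\Bigr\}.
\end{equation*}
Using $\norm{\upsigma}_{\infty}<\infty$ and the affine growth (A2) of $b$, I can choose $C_{0}$ and $\mu$ (via an elementary Cauchy--Schwarz estimate to absorb the linear-in-$\abs{x}$ drift term) so that $\{Z_{s}\}$ is a nonnegative supermartingale under every admissible control on an interval $[0,\tau_{*}]$, where $\tau_{*}>0$ depends only on the data and on $\lambda_{0}$. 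Optional stopping then yields
\begin{equation*}
\E^{\lambda_{0}R^{2}/(1+C_{0}t)-\mu t}\,\Prob^{U}_{x}(\uptau_{R}<t)\le \Exp^{U}_{x}\bigl[Z_{\uptau_{R}\wedge t}\bigr] \le \E^{\lambda_{0}\abs{x}^{2}}
\end{equation*}
for $t\in[0,\tau_{*}]$. By choosing $\lambda_{0}$ large and $\tau_{*}$ correspondingly small, one arranges $\lambda_{0}/(1+C_{0}\tau_{*})>k$, whence $\E^{kR^{2}}\Prob^{U}_{x}(\uptau_{R}<t)\to 0$ on $[0,\tau_{*}]$. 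Iterating over finitely many sub-intervals of length $\tau_{*}$ covers $[0,T]$ and closes the uniqueness proof.

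The main obstacle is the barrier construction and the fact that the time horizon $\tau_{*}$ is forced to be small; this reflects the classical T\"acklind threshold, in which a Gaussian exponential is a supermartingale only over a time horizon of order $(k\norm{\upsigma}_{\infty}^{2})^{-1}$ before the tuning parameter falls below the critical growth rate $k$. This is precisely where the boundedness of $\upsigma$ is essential, as it keeps $C_{0}$ finite; the affine growth of $b$ contributes only lower-order terms, absorbed via Cauchy--Schwarz. The iteration in $[0,T]$ is then routine, because the Gaussian bound on $w$ (with the same exponent $k$) is uniform in $t\in[0,T]$, so the barrier can be reset at each restart.
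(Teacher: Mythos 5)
Your proposal is correct and follows essentially the same route as the paper: both reduce to the difference of two solutions satisfying a pair of linear differential inequalities under the respective minimizing selectors, and both then invoke the Eidelman--Kamin--Porper/T\"acklind device of a Gaussian barrier (the paper uses the deterministic supersolution $\E^{(1+\gamma t)(1+k\abs{x}^{2})}$ and comparison on cylinders, you use the equivalent supermartingale form with optional stopping), valid on a short time interval of length determined by $k$ and $\norm{\upsigma}_{\infty}$ and then iterated to cover $[0,T]$. The only blemish is that your two differential inequalities have the control labels swapped ($\partial_{t}w-\Lg^{\Hv^{t}}w\le 0$ and $\partial_{t}w-\Lg^{\Hv'{}^{t}}w\ge 0$ is the correct pairing, as in the paper's (E-unq2)), which is immaterial since the boundary term is killed uniformly over admissible controls.
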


\begin{proof}
Let $\widehat{\RVI}\in\Ccl^{1,2}\bigl((0,\infty)\times\RR^{d}\bigl)
\cap\,\Cc\bigl([0,\infty)\times\RR^{d}\bigr)$
be the minimal nonnegative solution of
\begin{align}\label{E-unq1}
\partial_{t} \widehat{\RVI}(t,x)
&= \min_{u\in\Act}\; \left[\Lg^{u} \widehat{\RVI}(t,x) +r(x,u)\right]
\qquad \text{in}\ (0,\infty)\times\RR^{d}\,,
\\[5pt]\nonumber
\widehat{\RVI}(0,x)
&={\RVI}_{0}(x)\qquad\forall x\in\RR^{d}\,,
\end{align}
and let $\{\Hv_{t}\,,\ t\in\RR_{+}\}$ denote a measurable selector
from the minimizer in \eqref{E-unq1}.
Suppose that $\widetilde{\RVI}\in\Ccl^{1,2}\bigl((0,\infty)\times\RR^{d}\bigl)
\cap\,\Cc\bigl([0,\infty)\times\RR^{d}\bigr)$ is any solution of \eqref{E-unq1}
satisfying the hypothesis of the theorem,
and let $\{\Tilde{v}_{t}\,,\ t\in\RR_{+}\}$ denote a measurable selector
from the corresponding minimizer.
Then
$f \df \widetilde{\RVI}-\widehat{\RVI}$ satisfies, for any $T>0$,
\begin{equation}\label{E-unq2}
\partial_{t} f-\Lg^{\Hv^{T}} f\le0\quad\text{and}\quad
\partial_{t} f-\Lg^{\Tilde{v}^{T}} f\ge0
\quad \text{in}\ (0,T]\times\RR^{d}\,,
\end{equation}
and $f(0,x)=0$ for all $x\in\RR^{d}$.
By \eqref{E-Vest}, the hypothesis that $V^{*}\in\mathfrak{G}$,
and the hypothesis on the growth of $f$, it follows that for
some $k=k(T)>0$ large enough
\begin{equation}\label{E-unq3}
\lim_{\abs{x}\to\infty}\;\max_{t\in[0,T]}\;
\abs{f(t,x)}\,\E^{-k\abs{x}^{2}}=0\,.
\end{equation}

It is straightforward to verify by direct computation
using the bounds on the coefficients of the SDE that there
exists $\gamma=\gamma(k)>1$  such that
$g(t,x)\df \E^{(1+\gamma t)(1 + k\abs{x}^{2})}$ is a supersolution of
\begin{equation}\label{E-unq4}
\partial_{t} g-\Lg^{\Hv^{T_{0}}} g\ge0
\quad \text{in}\ (0,T_{0}]\times\RR^{d}\,,\quad\text{with}\ T_{0}\equiv \gamma^{-1}\,.
\end{equation}
By \eqref{E-unq3}, for any $\varepsilon>0$ we can select
$R>0$ large enough such that $\abs{f(t,x)}\le \varepsilon g(t,x)$
for all $(t,x)\in[0,\gamma^{-1}]\times\partial{B}_{R}$.
Using \eqref{E-unq2}, \eqref{E-unq4} and Dynkin's formula on the
strip $[0,\gamma^{-1}]\times\overline{B}_{R}$ it follows
that $\abs{f(t,x)}\le \varepsilon g(t,x)$ for all
$(t,x)\in[0,\gamma^{-1}]\times\overline{B}_{R}$.
Since $\varepsilon>0$ was arbitrary this implies $f\equiv0$,
or equivalently that $f =\widehat{\RVI}$
on $[0,\gamma^{-1}]\times\RR^{d}$.

Since, by \eqref{E-Vest}, $\widehat{\RVI}(\gamma^{-1},\cdot\,)\in\order_{V^{*}}$,
we can repeat the argument to show that
$f =\widehat{\RVI}$
on $[\gamma^{-1},2\gamma^{-1}]\times\RR^{d}$, and that the same holds by
induction on $[n\gamma^{-1},(n+1)\gamma^{-1}]\times\RR^{d}$, $n=2,3,\dotsc$,
until we cover the interval $[0,T]$.
This shows that $f =\widehat{\RVI}$
on $\overline{\RR^{d}_{T}}$, and since $T>0$ was arbitrary
the same holds on $[0,\infty)\times\RR^{d}$.
\qquad
\end{proof}

We do not enforce any of the assumptions of Theorem~\ref{T-unq} in
the rest of the paper.
Rather our analysis is based on the canonical solution to the VI and RVI
which is well defined (see Definition~\ref{D-canonical}).

\subsection{A region of attraction for the VI algorithm}\label{S-attract}

In this section we describe a region of attraction for the VI algorithm.
This is an subset of $\Cc^{2}(\RR^{d})$ which is invariant under
the semiflow defined by \eqref{EVI} and all its points are convergent,
i.e., converge to a solution of \eqref{E-HJB}.

\begin{definition}\label{D-flow}
We let $\overline\Phi_{t}[{\RVI}_{0}]:\Cc^{2}(\RR^{d})\to\Cc^{2}(\RR^{d})$,
$t\in[0,\infty)$, denote the canonical solution (semiflow) of
the VI in \eqref{EVI} starting from ${\RVI}_{0}$,
and $\Phi_{t}[{\RVI}_{0}]$ denote the corresponding
canonical solution (semiflow) of the RVI in \eqref{ERVI}.
Let $\Equil$ denote the set of solutions of the HJB in \eqref{E-HJB}, i.e.,
\begin{equation*}
\Equil\df\{V^{*} + c: c\in\RR\}\,.
\end{equation*}
Also for $c\in\RR$ we define the set $\cG_{c}\subset\Cc^{2}(\RR^{d})$ by
\begin{equation*}
\cG_{c} \df\bigl\{h\in\Cc^{2}(\RR^{d}) :
h-V^{*}\ge c\,,~ \norm{h}_{V^{*}}<\infty\bigr\}\,.
\end{equation*}
\end{definition}

We claim that for each $c\in\RR$, $\cG_{c}$ is invariant under the flow
$\overline\Phi_{t}$.
Indeed by \eqref{E-deg2bound} and \eqref{E-bound}, if ${\RVI}_{0}\in\cG_{c}$,
then we have that
\begin{align*}
c\le \overline\Phi_{t}[{\RVI}_{0}](x) - V^{*}(x)
&\le \Exp_{x}^{v^{*}}\bigl[{\RVI}_{0}(X_{t})-V^{*}(X_{t})\bigr]
\\[5pt]\nonumber
&\le
m_{r} \norm{{\RVI}_{0}-V^{*}}_{V^{*}}
(V^{*}(x)+1)\qquad \forall (t,x)\in\RR_{+}\times\RR^{d}\,.
\end{align*}
Since translating ${\RVI}_{0}$ by a constant simply translates the orbit
$\overline\Phi_{t}[{\RVI}_{0}]$, without loss of generality we let $c=0$, and we
show that all the points of $\cG_{0}$ are convergent.

\smallskip
\begin{theorem}\label{T3.10}
Under Assumption~\ref{A3.2},
for each ${\RVI}_{0}\in\cG_{0}$ the orbit
$\overline\Phi_{t}[{\RVI}_{0}]$, and therefore
also $\Phi_{t}[{\RVI}_{0}]$, converges as $t\to\infty$ to a point in
$\Equil\cap\cG_{0}$.
\end{theorem}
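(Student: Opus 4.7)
The plan is to combine the upper bound in \eqref{E-bound} with ergodicity of the diffusion under the optimal stationary control $v^{*}$ to identify the common asymptotic $\imeas_{v^{*}}$-average of the orbit, and then pin down the $\omega$-limit set by a squeezing argument. First I would verify that $\cG_{0}$ is positively invariant under $\overline\Phi_{t}$: since $\RVI_{0}\ge V^{*}$, the lower bound in \eqref{E-bound} gives $\VI(t,\cdot)\ge V^{*}$, while the upper bound combined with $\RVI_{0}-V^{*}\le\norm{\RVI_{0}-V^{*}}_{V^{*}}V^{*}$ and Lemma~\ref{L3.4} yields the uniform-in-$t$ estimate $\sup_{t\ge0}\norm{\VI(t,\cdot)-V^{*}}_{V^{*}}<\infty$. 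Together with nondegeneracy and standard interior parabolic estimates (as in \cite{Lady}), this makes $\{\VI(t,\cdot):t\ge1\}$ precompact in $\Ccl^{1,2}(\RR^{d})$, so $\omega(\RVI_{0})$ is nonempty, compact, and forward-invariant under $\overline\Phi_{t}$.

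Second, I would introduce the Lyapunov functional $h(t)\df\imeas_{v^{*}}[\VI(t,\cdot)-V^{*}]\ge 0$, which is well-defined by Assumption~\ref{A3.2} and the uniform $V^{*}$-bound. Applying \eqref{E-bound} to the semigroup restart with initial datum $\VI(t,\cdot)$ gives
\begin{equation*}
\VI(t+s,x)-V^{*}(x)\;\le\;\Exp_{x}^{v^{*}}\bigl[(\VI(t,\cdot)-V^{*})(X_{s})\bigr]\,.
\end{equation*}
Integrating against $\imeas_{v^{*}}$ and using its invariance under the $v^{*}$-semigroup yields $h(t+s)\le h(t)$, so $h(t)\searrow h_{\infty}\ge 0$. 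Passing to the limit $s\to\infty$ in the displayed inequality (via the ergodic theorem for the $v^{*}$-diffusion, which applies by virtue of Lemma~\ref{L3.4} providing both pointwise convergence and the $V^{*}$-domination needed for uniform integrability) gives $\limsup_{\sigma\to\infty}[\VI(\sigma,x)-V^{*}(x)]\le h(t)$ for every $t\ge 0$ and $x\in\RR^{d}$; sending $t\to\infty$ then yields
\begin{equation*}
\limsup_{\sigma\to\infty}\;\bigl[\VI(\sigma,x)-V^{*}(x)\bigr]\;\le\;h_{\infty}\qquad\forall x\in\RR^{d}\,.
\end{equation*}

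Third, I would pin down $\omega(\RVI_{0})$. Let $\psi\in\omega(\RVI_{0})$ with $\VI(t_{n},\cdot)\to\psi$ in $\Ccl^{1,2}$ along some $t_{n}\uparrow\infty$. Pointwise passage to the limit in the previous inequality gives $\psi(x)-V^{*}(x)\le h_{\infty}$, while $\cG_{0}$-invariance gives $\psi-V^{*}\ge 0$. Dominated convergence (using $\VI(t,\cdot)-V^{*}\le\norm{\RVI_{0}-V^{*}}_{V^{*}}\,m_{r}(V^{*}+1)$ uniformly in $t$ and $V^{*}\in L^{1}(\imeas_{v^{*}})$) gives $\imeas_{v^{*}}[\psi-V^{*}]=\lim_{n}h(t_{n})=h_{\infty}$. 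The combination $0\le\psi-V^{*}\le h_{\infty}$ with $\imeas_{v^{*}}$-average exactly $h_{\infty}$ forces $\psi-V^{*}=h_{\infty}$ $\imeas_{v^{*}}$-a.e.; since $\imeas_{v^{*}}$ admits a strictly positive density on $\RR^{d}$ (by nondegeneracy and irreducibility of the $v^{*}$-diffusion) and $\psi$ is continuous, this holds pointwise. Hence $\omega(\RVI_{0})=\{V^{*}+h_{\infty}\}\subset\Equil\cap\cG_{0}$ and $\overline\Phi_{t}[\RVI_{0}]\to V^{*}+h_{\infty}$, and the convergence of $\Phi_{t}[\RVI_{0}]$ follows via \eqref{E-virvi}.

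The main technical point is the passage $\Exp_{x}^{v^{*}}[(\VI(t,\cdot)-V^{*})(X_{s})]\to\imeas_{v^{*}}[\VI(t,\cdot)-V^{*}]$ as $s\to\infty$; once this limit is available, everything else is a short squeeze using the upper bound and the decreasing Lyapunov $h$. The crucial asymmetric use of \eqref{E-bound} is that one never needs to control the intractable lower bound (which involves the time-dependent minimizer $\Hv^{t}$) beyond asserting $\VI\ge V^{*}$ on $\cG_{0}$; everything quantitative is extracted from the upper bound, whose tractability stems from the ergodic behavior of the $v^{*}$-diffusion supplied by Lemma~\ref{L3.4}.
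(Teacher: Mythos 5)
Your proposal is correct, and it shares the paper's central device---the nonincreasing functional $t\mapsto\imeas_{v^{*}}[\VI(t,\cdot\,)-V^{*}]$, obtained by integrating the upper bound of \eqref{E-bound} (equivalently \eqref{ET3.10a}) against the invariant measure---but the way you extract convergence from it is genuinely different. The paper runs a LaSalle-type invariance argument: on the $\omega$-limit set the functional is constant, so for any limit point $h$ the dissipation term $f$ in \eqref{E-attr2} vanishes a.e.\ by \eqref{E-attr4}, which forces the orbit through $h$ to converge to a constant shift of $V^{*}$. You instead prove the quantitative pointwise bound $\limsup_{\sigma\to\infty}\,[\VI(\sigma,x)-V^{*}(x)]\le h_{\infty}$ by letting $s\to\infty$ in $\VI(t+s,x)-V^{*}(x)\le\Exp_{x}^{v^{*}}\bigl[(\VI(t,\cdot\,)-V^{*})(X_{s})\bigr]$, and then squeeze: any $\psi\in\omega({\RVI}_{0})$ satisfies $0\le\psi-V^{*}\le h_{\infty}$ with $\imeas_{v^{*}}$-average exactly $h_{\infty}$, hence $\psi=V^{*}+h_{\infty}$ everywhere by positivity of the invariant density. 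The price of your route is the $V^{*}$-weighted ergodic theorem $\Exp_{x}^{v^{*}}[g(X_{s})]\to\imeas_{v^{*}}[g]$ for continuous $g$ dominated by $V^{*}$; this is available at no extra cost, since the proof of Lemma~\ref{L3.4} already invokes the $f$-norm ergodic theorem of \cite{MT-III} under the drift condition \eqref{E-2order}, which delivers precisely this (your alternative justification via weak convergence plus uniform integrability of $\{V^{*}(X_{s})\}$ is also sound). What you buy is a cleaner endgame: you bypass the It\^o computation \eqref{E-attr2}--\eqref{E-attr4} and the somewhat terse inference that $f=0$ a.e.\ implies convergence of $\overline\Phi_{t}[h]$, and you identify the limit explicitly as $V^{*}+h_{\infty}$. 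Both arguments need the orbit to be precompact in a topology giving local uniform convergence so that $\omega({\RVI}_{0})$ is nonempty and the singleton structure yields convergence of the full orbit; you supply this explicitly via the interior estimates of \cite{Lady}, which the paper leaves implicit at this stage.
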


\begin{proof}
Since, as we showed in the paragraph preceding the theorem,
$\overline\Phi_{t}[{\RVI}_{0}]\in\cG_{0}$ for all $t\ge0$, by 
\eqref{E-SRa} we have
\begin{equation}\label{ET3.10a}
\overline\Phi_{t}[{\RVI}_{0}](x) \le  \Exp^{v^{*}}_{x}
\left[\int_{0}^{t-\tau}\Or\bigl(X_{s},v^{*}(X_{s})\bigr)\,\D{s}
+\overline\Phi_{\tau}[{\RVI}_{0}](X_{t-\tau})\right]\qquad \forall \tau\in[0,t]\,,
\end{equation}
Since $\overline\Phi_{t}[{\RVI}_{0}](x)- V^{*}(x)\ge0$,
and $\int_{\RR^{d}} \overline\Phi_{t}[{\RVI}_{0}](x)
\,\imeas_{v^{*}}(\D{x})$ is finite by Assumption~\ref{A3.2}, it follows by integrating
\eqref{ET3.10a} with respect to $\imeas_{v^{*}}$ that the map
\begin{equation}\label{E-attr1}
t\mapsto \int_{\RR^{d}} \overline\Phi_{t}[{\RVI}_{0}](x)
\,\imeas_{v^{*}}(\D{x})
\end{equation}
is nonincreasing and bounded below.
Hence it must be constant on the $\omega$-limit set of ${\RVI}_{0}$
denoted by $\omega({\RVI}_{0})$.
Let $h\in\omega({\RVI}_{0})$ and define
\begin{equation}\label{E-attr2}
f(t,x) \df -\partial_{t}\overline\Phi_{t}[h](x)
+\Lg^{v^{*}}\bigl(\overline\Phi_{t}[h](x) - V^{*}(x)\bigr)\,.
\end{equation}
Then $f(t,x)\ge0$ for all $(t,x)$,
and by applying It\^o's formula to \eqref{E-attr2}, we obtain
\begin{equation}\label{E-attr3}
\overline\Phi_{t}[h](x) - V^{*}(x) -
\Exp^{v^{*}}_{x}[h(X_{t})-V^{*}(X_{t})]
= - \Exp^{v^{*}}_{x}\left[\int_{0}^{t} f(t-s,X_{s})\,\D{s}\right]\,.
\end{equation}
Integrating \eqref{E-attr3} with respect to the invariant distribution
$\imeas_{v^{*}}$ we obtain
\begin{equation}\label{E-attr4}
\int_{\RR^{d}} \bigl(\overline\Phi_{t}[h](x)-h(x)\bigr)\,\imeas_{v^{*}}(\D{x})
= -\int_{0}^{t}\int_{\RR^{d}} f(t-s,x)\,\imeas_{v^{*}}(\D{x})\,\D{s}\qquad
\forall t\ge0\,.
\end{equation}
Since the term on the left-hand-side of \eqref{E-attr4}
equals $0$, as we argued above,
it follows that $f(t,x)=0$, $(t,x)-a.e.$, which in turn implies that
\begin{equation*}
\lim_{t\to\infty} \;\overline\Phi_{t}[h](x) = V^{*}(x) -
\int_{\RR^{d}} \bigl(V^{*}(x)-h(x)\bigr)\,\imeas_{v^{*}}(\D{x})\,.
\end{equation*}
It follows that $\omega({\RVI}_{0})\subset\Equil\cap\cG_{0}$
and since the map in \eqref{E-attr1} is nonincreasing, it is straightforward
to verify that $\omega({\RVI}_{0})$ must be a singleton.
\qquad
\end{proof}

We also have the following result which does not require
Assumption~\ref{A3.2}.

\begin{corollary}
Suppose ${\RVI}_{0}\in\Cc^{2}(\RR^{d})$ is
such that ${\RVI}_{0}-V^{*}$ is bounded.
Then $\overline\Phi_{t}[{\RVI}_{0}]$ converges as $t\to\infty$ to a point in $\Equil$.
\end{corollary}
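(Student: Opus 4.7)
The plan is to adapt Theorem~\ref{T3.10} by tracking the difference $g_{t}\df\overline\Phi_{t}[{\RVI}_{0}]-V^{*}$ instead of $\overline\Phi_{t}[{\RVI}_{0}]$ itself. The hypothesis that ${\RVI}_{0}-V^{*}$ is bounded will propagate to all times, making $g_{t}$ uniformly bounded in $(t,x)$ and hence integrable against the probability measure $\imeas_{v^{*}}$ regardless of whether $V^{*}$ is; this is what lets us dispense with Assumption~\ref{A3.2}.

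By translation invariance of the semiflow ($\overline\Phi_{t}[{\RVI}_{0}+c]=\overline\Phi_{t}[{\RVI}_{0}]+c$), I would first assume without loss that $\inf({\RVI}_{0}-V^{*})=0$. Because $V^{*}$ and $V^{*}+\sup g_{0}$ are stationary solutions of \eqref{EVI} and the flow is order preserving (immediate from \eqref{E-SRa}), the sandwich $V^{*}\le{\RVI}_{0}\le V^{*}+\sup g_{0}$ is preserved in $t$, yielding $0\le g_{t}\le\sup g_{0}$. Following the derivation of \eqref{ET3.10a} and subtracting from it the identity $V^{*}(x)=\Exp_{x}^{v^{*}}\bigl[\int_{0}^{s}\Or(X_{r},v^{*})\,\D r+V^{*}(X_{s})\bigr]$ (Dynkin's formula for $V^{*}$, which holds with equality because $v^{*}$ attains the HJB minimum) then gives $g_{t}(x)\le\Exp_{x}^{v^{*}}[g_{\tau}(X_{t-\tau})]$ for $\tau\le t$. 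Integrating against the $v^{*}$-invariant measure $\imeas_{v^{*}}$ shows that $t\mapsto\imeas_{v^{*}}[g_{t}]$ is nonincreasing and bounded below, hence convergent to some $L\ge 0$.

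Next, fix $h\in\omega({\RVI}_{0})$. Parabolic interior estimates make the orbit precompact in $\Ccl^{1,2}$, so along a subsequence $\overline\Phi_{t_{n}}[{\RVI}_{0}]\to h$, and dominated convergence gives $\imeas_{v^{*}}[h-V^{*}]=L$; forward invariance then propagates this to $\imeas_{v^{*}}[\overline\Phi_{t}[h]-V^{*}]=L$ for every $t\ge 0$. Replaying the It\^o calculation \eqref{E-attr2}--\eqref{E-attr4} with $h$ as starting datum then forces the nonnegative function $f$ of \eqref{E-attr2} to vanish identically along this orbit, which means $v^{*}$ is an HJB-minimizer for every $\overline\Phi_{t}[h]$. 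Consequently $w\df\overline\Phi_{t}[h]-V^{*}$ satisfies the linear Cauchy problem $\partial_{t}w=\Lg^{v^{*}}w$ with bounded initial datum $h-V^{*}$, and by uniqueness of bounded solutions $w(t,x)=\Exp_{x}^{v^{*}}\bigl[(h-V^{*})(X_{t})\bigr]$.

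To conclude $\omega({\RVI}_{0})=\{V^{*}+L\}$ I would invoke the standard backward invariance of $\omega$-limit sets of precompact semiflows: for every $T>0$ there is $h_{T}\in\omega({\RVI}_{0})$ with $\overline\Phi_{T}[h_{T}]=h$, and the previous step applied to $h_{T}$ yields $h(x)-V^{*}(x)=\Exp_{x}^{v^{*}}\bigl[(h_{T}-V^{*})(X_{T})\bigr]$. Subtracting $L=\imeas_{v^{*}}[h_{T}-V^{*}]$ and using the uniform bound $\|h_{T}-V^{*}\|_{\infty}\le\sup g_{0}$,
\begin{equation*}
\bigl\lvert h(x)-V^{*}(x)-L\bigr\rvert\;\le\;(\sup g_{0})\,\bnorm{P^{T}_{v^{*}}(x,\cdot)-\imeas_{v^{*}}}_{TV}\,,
\end{equation*}
and the right-hand side vanishes as $T\to\infty$ by ergodicity of the nondegenerate positive recurrent diffusion under $v^{*}$ (compact sets are petite, as noted in the proof of Lemma~\ref{L3.4}), whereas the left-hand side is $T$-independent. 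Hence $h=V^{*}+L$, and convergence of $\Phi_{t}[{\RVI}_{0}]$ follows from \eqref{E-virvi}. I expect this last telescoping step to be the main technical obstacle: Theorem~\ref{T3.10} dismisses the analogous singleton conclusion as ``straightforward'' via monotonicity of \eqref{E-attr1} under Assumption~\ref{A3.2}, but without that assumption the more direct ergodicity argument above seems to be required.
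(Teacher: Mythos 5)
Your proposal is correct and matches the paper's approach: the paper's entire proof is the observation that, by \eqref{E-bound}, boundedness of ${\RVI}_{0}-V^{*}$ makes $x\mapsto{\VI}(t,x)-V^{*}(x)$ bounded uniformly in $t$ (your sandwich between the stationary solutions $V^{*}$ and $V^{*}+\sup g_{0}$ is an equivalent way to see this), after which the argument of Theorem~\ref{T3.10} goes through with integrability against $\imeas_{v^{*}}$ supplied by boundedness rather than by Assumption~\ref{A3.2}. Your only real departure is that you spell out the final claim that $\omega({\RVI}_{0})$ is a singleton (via backward invariance of the $\omega$-limit set and total-variation convergence of $P^{T}_{v^{*}}(x,\cdot\,)$ to $\imeas_{v^{*}}$), a step the paper dismisses as straightforward.
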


\begin{proof}
By \eqref{E-bound}, under the hypothesis, 
$x\mapsto{\VI}(t,x)-V^{*}(x)$ is bounded uniformly in $t$.
Thus the result follows as in the proof of Theorem~\ref{T3.10}.
\qquad
\end{proof}

\section{Growth Estimates for Solutions of the Value Iteration}\label{S-growth}

Most of the results of this section do not require Assumption~\ref{A3.2}.
It is only need for Lemma~\ref{L-inner} and Corollary~\ref{C-Harnack}.
Throughout this section and also in \S\ref{S-main} a solution
${\VI}$ (${\RVI}$) always refers to the canonical solution of the
VI (RVI)
without further mention (see Definition~\ref{D-canonical}).

\begin{lemma}\label{L-asympt}
Suppose ${\RVI}_{0}\in\order_{V^{*}}$.
Then
\begin{equation*}
\frac{1}{t}\;{\VI}(t,x)\xrightarrow[t\to\infty]{}0\,.
\end{equation*}
\end{lemma}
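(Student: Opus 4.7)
The plan is to sandwich $\VI(t,x)$ between an $O(1)$ upper bound and an $o(t)$ lower bound, so that both the $\limsup$ and the $\liminf$ of $\VI(t,x)/t$ vanish.

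For the upper bound I would substitute the admissible control $v^{*}$ into the stochastic representation \eqref{E-SRa}:
\begin{equation*}
\VI(t,x) \le \Exp_{x}^{v^{*}}\!\left[\int_{0}^{t} \Or(X_{s},v^{*}(X_{s}))\,\D{s}\right] + \Exp_{x}^{v^{*}}\bigl[\RVI_{0}(X_{t})\bigr].
\end{equation*}
Since $v^{*}$ satisfies \eqref{E-v*}, the HJB equation \eqref{E-HJB} reduces to the Poisson equation $\Lg^{v^{*}}V^{*} = -\Or(\cdot,v^{*})$, and Dynkin's formula identifies the first expectation as $V^{*}(x) - \Exp_{x}^{v^{*}}[V^{*}(X_{t})]$. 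Bound \eqref{E-deg2bound} of Lemma~\ref{L3.4} gives $\Exp_{x}^{v^{*}}[V^{*}(X_{t})] \le m_{r}(V^{*}(x)+1)$ uniformly in $t$, and the pointwise inequality $\RVI_{0} \le \norm{\RVI_{0}}_{V^{*}}V^{*}$ in turn yields $\Exp_{x}^{v^{*}}[\RVI_{0}(X_{t})] \le \norm{\RVI_{0}}_{V^{*}}\,m_{r}(V^{*}(x)+1)$. Consequently $\VI(t,x) \le K(x)$ uniformly in $t\ge 0$, which gives $\limsup_{t\to\infty}\VI(t,x)/t \le 0$.

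For the lower bound I would apply the estimate \eqref{E-bound}; since $\RVI_{0}\ge 0$,
\begin{equation*}
\VI(t,x) \ge V^{*}(x) - \Exp_{x}^{\Hv^{t}}\bigl[V^{*}(X_{t})\bigr],
\end{equation*}
so it suffices to show $\Exp_{x}^{\Hv^{t}}[V^{*}(X_{t})] = o(t)$. The upper bound just established forces
$\Exp_{x}^{\Hv^{t}}[\int_{0}^{t} r(X_{s},\Hv^{t}_{s}(X_{s}))\,\D{s}] \le \varrho\,t + K(x)$,
so by the near-monotone hypothesis \eqref{E-nm} the expected occupation time under $\Hv^{t}$ of the set $\{x\in\RR^{d}:\min_{u} r(x,u) > M\}$ is at most $(\varrho\,t + K(x))/M$. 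Coupling this time-averaged occupation estimate with the Foster--Lyapunov inequality \eqref{E-2order} furnished by Assumption~\ref{A3.2} then converts the integrated bound into the desired sublinear bound on the terminal quantity $\Exp_{x}^{\Hv^{t}}[V^{*}(X_{t})]$.

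The principal obstacle is the lower bound: unlike $v^{*}$, which is stable with a Foster--Lyapunov function, the finite-horizon minimizer $\Hv^{t}$ has no intrinsic stability guarantee, so the argument must combine the cost-budget constraint from the upper bound with the near-monotone structure and the Lyapunov inequality of Assumption~\ref{A3.2} to prevent the terminal state $X_{t}$ from settling in regions where $V^{*}$ is large.
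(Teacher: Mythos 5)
Your upper bound is correct and is essentially the paper's: substitute $v^{*}$ into \eqref{E-SRa}, use Dynkin's formula on the Poisson equation for $V^{*}$ under $v^{*}$, and control the terminal term via $\norm{{\RVI}_{0}}_{V^{*}}<\infty$. (Two minor remarks: you only need $\Exp_{x}^{v^{*}}[V^{*}(X_{t})]\ge\min V^{*}>0$ to bound the running-cost term, and the terminal term only needs $\tfrac{1}{t}\Exp_{x}^{v^{*}}[{\RVI}_{0}(X_{t})]\to0$, so the appeal to \eqref{E-deg2bound} — and hence to Assumption~\ref{A3.2} — is avoidable; the paper deliberately proves this lemma without Assumption~\ref{A3.2}.)

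The lower bound is where there is a genuine gap. Your route via \eqref{E-bound} reduces the problem to showing $\Exp_{x}^{\Hv^{t}}[V^{*}(X_{t})]=o(t)$, but the proposed mechanism for this does not work. The Foster--Lyapunov inequality \eqref{E-2order} reads $\Lg^{v^{*}}\Lyap\le -V^{*}$ on $B^{c}$: it is a drift condition for the generator under the \emph{fixed stable control} $v^{*}$, and it gives no information whatsoever about the process driven by the nonstationary minimizer $\Hv^{t}$, which is exactly the process whose terminal marginal you need to control. Moreover, the cost-budget argument only bounds the \emph{time-averaged} occupation of high-cost sets under $\Hv^{t}$; a time-averaged occupation bound cannot by itself control the marginal at the single terminal time $t$ (the process could well be far out at time $t$ while still satisfying the averaged constraint). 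Indeed, the only terminal bound that falls out of the available estimates is $\Exp_{x}^{\Hv^{t}}[V^{*}(X_{t})]=\order(t)$, obtained circularly from \eqref{E-bound} itself, which is not enough. The paper sidesteps the terminal state entirely: since ${\RVI}_{0}\ge0$, one discards the terminal term in \eqref{E-SRa} to get ${\VI}(t,x)\ge\inf_{U}\Exp_{x}^{U}\bigl[\int_{0}^{t}r(X_{s},U_{s})\,\D{s}\bigr]-\varrho\,t$, and then invokes the (standard, but nontrivial) fact that under the near-monotone hypothesis the normalized finite-horizon optimal cost satisfies $\liminf_{t\to\infty}\tfrac{1}{t}\inf_{U}\Exp_{x}^{U}\bigl[\int_{0}^{t}r\,\D{s}\bigr]\ge\varrho$, since $\varrho$ is the optimal ergodic cost. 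You should either adopt that argument or supply a genuinely new mechanism for bounding $\Exp_{x}^{\Hv^{t}}[V^{*}(X_{t})]$; as written, the last step of your lower bound is an assertion, not a proof.
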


\begin{proof}
Since $\norm{{\RVI}_{0}}_{V^{*}}<\infty$ it follows that
$\frac{1}{t}\Exp_{x}^{v^{*}}[{\RVI}_{0}(X_{t})]\to 0$ as $t\to\infty$
(see \cite[Lemma~3.7.2~(ii)]{book}),
and so we have
\begin{align*}
0 &\le \liminf_{t\to\infty}\;\frac{1}{t}\;
\inf_{U\in\Uadm}\;\Exp_{x}^{U}
\left[\int_{0}^{t}\Or(X_{s},U_{s})\,\D{s}
+ {\RVI}_{0}(X_{t})\right]\\[5pt]
&= \liminf_{t\to\infty}\;\frac{{\VI}(t,x)}{t}
\le \limsup_{t\to\infty}\;\frac{{\VI}(t,x)}{t}\\[5pt]
&\le
\limsup_{t\to\infty}\;
\frac{1}{t}\;\Exp_{x}^{v^{*}}
\left[\int_{0}^{t}
\Or\bigl(X_{s},v^{*}(X_{s})\bigr)\,\D{s}+{\RVI}_{0}(X_{t})\right]\\[5pt]
&=0\,.
\end{align*}
The first inequality above uses the fact that ${\RVI}_{0}$ is bounded below
and that $\varrho$ is the optimal ergodic cost.
\qquad
\end{proof}

\medskip
\begin{lemma}\label{L-nice}
Provided $\norm{{\RVI}_{0}}_{\infty}<\infty$,
it holds that for all $t\ge0$
\begin{equation*}
{\VI}(t-\tau,x) - {\VI}(t,x)\le \varrho\,\tau + \osc_{\RR^{d}}\;{\RVI}_{0}
\qquad \forall x\in\RR^{d}\,,\quad\forall \tau\in[0,t]\,.
\end{equation*}
\end{lemma}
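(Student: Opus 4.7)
The plan is to compare the stochastic representations \eqref{E-SRa}--\eqref{E-SRb} for ${\VI}(t-\tau,x)$ and ${\VI}(t,x)$, using the time-dependent optimal control $\Hv^{t}$ of Definition~\ref{D-minimizer}. Translating ${\RVI}_{0}$ by an additive constant shifts ${\VI}(\cdot,x)$ by the same constant and leaves both sides of the claimed inequality unchanged, so without loss of generality I normalise to $\inf_{\RR^{d}}{\RVI}_{0}=0$; then ${\RVI}_{0}\in\order_{V^{*}}$ and $\sup_{\RR^{d}}{\RVI}_{0}=\osc_{\RR^{d}}\;{\RVI}_{0}$.

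The key observation is the semigroup identity
\begin{equation*}
{\VI}(t,x)=\Exp^{\Hv^{t}}_{x}\!\left[\int_{0}^{t-\tau}\Or\bigl(X_{s},\Hv^{t}_{s}(X_{s})\bigr)\,\D s + {\VI}(\tau,X_{t-\tau})\right].
\end{equation*}
To justify it, note from $\Hv^{t}_{s}=\Hv_{t-s}$ that the shift of $\Hv^{t}$ by $t-\tau$ satisfies $\Hv^{t}_{(t-\tau)+s}=\Hv_{\tau-s}=\Hv^{\tau}_{s}$ for $s\in[0,\tau]$; splitting the integral in \eqref{E-SRb} at $t-\tau$ and applying the strong Markov property, the tail from $t-\tau$ to $t$ becomes $\Exp_{X_{t-\tau}}^{\Hv^{\tau}}[\int_{0}^{\tau}\Or\,\D s + {\RVI}_{0}(X_{\tau})]$, which equals ${\VI}(\tau,X_{t-\tau})$ by a second application of \eqref{E-SRb} at horizon $\tau$.

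On the other hand, the restriction of $\Hv^{t}$ to $[0,t-\tau]$ is admissible for the horizon-$(t-\tau)$ problem, so \eqref{E-SRa} gives the upper bound
\begin{equation*}
{\VI}(t-\tau,x)\le \Exp^{\Hv^{t}}_{x}\!\left[\int_{0}^{t-\tau}\Or\bigl(X_{s},\Hv^{t}_{s}(X_{s})\bigr)\,\D s + {\RVI}_{0}(X_{t-\tau})\right].
\end{equation*}
Subtracting the semigroup identity from this bound, the running-cost integrals cancel exactly and I obtain
\begin{equation*}
{\VI}(t-\tau,x)-{\VI}(t,x)\le \Exp^{\Hv^{t}}_{x}\bigl[{\RVI}_{0}(X_{t-\tau})-{\VI}(\tau,X_{t-\tau})\bigr].
\end{equation*}

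To conclude, I invoke the lower bound ${\VI}(\tau,\cdot)\ge -\varrho\,\tau$ supplied by Theorem~\ref{T-canonical}, which uses only $\Or\ge -\varrho$ and ${\RVI}_{0}\ge 0$. Combined with the normalisation $\sup_{\RR^{d}}{\RVI}_{0}=\osc_{\RR^{d}}\;{\RVI}_{0}$, this yields the pointwise estimate ${\RVI}_{0}(y)-{\VI}(\tau,y)\le \osc_{\RR^{d}}\;{\RVI}_{0}+\varrho\,\tau$ for every $y\in\RR^{d}$; inserting it in the preceding display gives the claim. The one step where any real care is needed is establishing the semigroup identity; once the shifted-control identity $\Hv^{t}_{(t-\tau)+s}=\Hv^{\tau}_{s}$ is in hand, everything reduces to the strong Markov property and to the representation formulas \eqref{E-SRa}--\eqref{E-SRb}.
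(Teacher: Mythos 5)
Your proof is correct, but it takes a different route from the paper's. The paper compares the two infima directly: writing $A(U)$ and $B(U)$ for the horizon-$(t-\tau)$ and horizon-$t$ costs under the \emph{same} admissible control $U$, it uses $\inf_{U}A - \inf_{U}B \le \sup_{U}\bigl(A(U)-B(U)\bigr)$ and observes that $A(U)-B(U)
= \Exp^{U}_{x}\bigl[{\RVI}_{0}(X_{t-\tau})-{\RVI}_{0}(X_{t})-\int_{t-\tau}^{t}\Or\,\D{s}\bigr]
\le \osc_{\RR^{d}}{\RVI}_{0}+\varrho\,\tau$, since $\Or\ge-\varrho$. That is, the paper isolates the \emph{tail} segment $[t-\tau,t]$ of the horizon-$t$ problem, whereas you split off the \emph{initial} segment $[0,t-\tau]$, run the horizon-$t$ optimal control $\Hv^{t}$ there, and then control the terminal discrepancy via the a priori bound ${\VI}(\tau,\cdot\,)\ge-\varrho\,\tau$ from Theorem~\ref{T-canonical}. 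Your argument is sound — the shifted-control identity $\Hv^{t}_{(t-\tau)+s}=\Hv^{\tau}_{s}$ is right, the dynamic programming identity you build on it is legitimate (and only the "$\ge$" half of it is actually needed, which follows from \eqref{E-SRa} and the tower property without identifying the shifted control), and the normalisation step is harmless. What you pay for this route is precisely the need to justify the semigroup identity, including the conditional-expectation/integrability care it entails; the paper's comparison avoids all of that and needs neither the optimal control $\Hv^{t}$ nor the lower bound on ${\VI}$, only the elementary inequality between infima. On the other hand, your intermediate inequality ${\VI}(t-\tau,x)-{\VI}(t,x)\le\Exp^{\Hv^{t}}_{x}\bigl[{\RVI}_{0}(X_{t-\tau})-{\VI}(\tau,X_{t-\tau})\bigr]$ is a slightly sharper structural statement that could in principle be exploited with better information on ${\VI}(\tau,\cdot\,)$ than the crude bound $-\varrho\,\tau$, which is relevant to the conjecture in \S\ref{S-concl} about unbounded initial data.
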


{\em Proof}.
We have
\begin{align*}
{\VI}(t-\tau,x) - {\VI}(t,x) &= \inf_{U\in\Uadm}\;\Exp^{U}_{x}
\left[\int_{0}^{t-\tau}\Or(X_{s},U_{s})\,\D{s}+{\RVI}_{0}(X_{t-\tau})\right]
\nonumber\\[5pt]
&\mspace{100mu} - \inf_{U\in\Uadm}\;\Exp^{U}_{x}
\left[\int_{0}^{t}\Or(X_{s},U_{s})\,\D{s}+{\RVI}_{0}(X_{t})\right]
\nonumber\\[5pt]
&\le - \inf_{U\in\Uadm}\;\Exp^{U}_{x}\left[{\RVI}_{0}(X_{t})-{\RVI}_{0}(X_{t-\tau})
+ \int_{t-\tau}^{t}\Or(X_{s},U_{s})\,\D{s}\right]
\nonumber\\[5pt]
&\le \varrho\,\tau + \osc_{\RR^{d}}\;{\RVI}_{0}\,.\qquad\endproof
\end{align*}

\begin{definition}
We define:
\begin{equation*}
\cK\df\bigl\{x\in\RR^{d}: \min_{u\in\Act}\;r(x,u)\le \varrho\bigr\}\,.
\end{equation*}
Let $\Bo$ be some open bounded ball containing $\cK$ and
define $\tc\df\uptau(\Bo^{c})$.
Also let $\delta_{0}>0$ be such that $r(x,u)\ge\varrho+\delta_{0}$
on $\Bo^{c}$.
\end{definition}

\smallskip
\begin{lemma}
Suppose ${\RVI}_{0}\in\order_{V^{*}}$.
Then it holds that
\begin{equation}\label{E-hit-1}
{\VI}(t,x) \le\Exp_{x}^{v^{*}}
\left[\int_{0}^{\tc\wedge{t}}\Or\bigl(X_{s},v^{*}(X_{s})\bigr)\,\D{s}
+{\VI}(t-\tc\wedge{t},X_{\tc\wedge{t}})\right]\,,
\end{equation}
and
\begin{equation}\label{E-hit-2}
{\VI}(t,x) \ge\Exp_{x}^{\Hv^{t}}
\left[\int_{0}^{\tc\wedge{t}}\Or\bigl(X_{s},\Hv^{t}_{s}(X_{s})\bigr)\,\D{s}
+{\VI}(t-\tc\wedge{t},X_{\tc\wedge{t}})\right]
\end{equation}
for all $x\in \Bo^{c}$.
\end{lemma}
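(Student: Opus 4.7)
The plan is to apply It\^o's formula to the map $s\mapsto{\VI}(t-s,X_{s})$ on the localized stopping interval $[0,\sigma_{R}]$ with $\sigma_{R}\df\tc\wedge\uptau_{R}\wedge t$, first under $v^{*}$ to get \eqref{E-hit-1} and then under $\Hv^{t}$ to get \eqref{E-hit-2}, and then to pass to the limit $R\to\infty$ exploiting that $\Or(\cdot,u)\ge\delta_{0}$ on $\Bo^{c}$. Since ${\VI}\in\Ccl^{1,2}$, Krylov's extension of It\^o's formula gives
\begin{equation*}
{\VI}(t-\sigma_{R},X_{\sigma_{R}})-{\VI}(t,x)
=\int_{0}^{\sigma_{R}}\bigl(-\partial_{\tau}{\VI}+\Lg^{w_{s}}{\VI}\bigr)(t-s,X_{s})\,\D{s} + M_{\sigma_{R}},
\end{equation*}
where $M_{\sigma_{R}}$ is a genuine mean-zero martingale because $\nabla{\VI}$ and $\upsigma$ are bounded on $\overline{B}_{R}$. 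The PDE \eqref{EVI} yields $\partial_{\tau}{\VI}\le\Lg^{u}{\VI}+\Or(\cdot,u)$ for all $u\in\Act$, with equality at the minimizer $\Hv_{\tau}$. Inserting $w_{s}=v^{*}(X_{s})$ therefore produces the localized upper bound (i.e., \eqref{E-hit-1} with $\tc\wedge t$ replaced by $\sigma_{R}$), while inserting $w_{s}=\Hv^{t}_{s}(X_{s})$ produces an exact equality at the $\sigma_{R}$ level.

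To pass to the limit $R\to\infty$, note that non-explosiveness of the diffusion gives $\uptau_{R}\uparrow\infty$ almost surely under every admissible control, hence $\sigma_{R}\uparrow\tc\wedge t$ and $X_{\sigma_{R}}\to X_{\tc\wedge t}$ a.s.\ by path continuity. For the integral term the integrand is bounded below by $\delta_{0}>0$ while $X_{s}\in\Bo^{c}$ on $[0,\tc)$, so monotone convergence applies and the limit is finite (under $v^{*}$ this follows from Dynkin's formula applied to $V^{*}$, which bounds the integral by $V^{*}(x)$). For the boundary term one relies on the two-sided bound of Theorem~\ref{T-canonical}, namely
\begin{equation*}
-\varrho t\le {\VI}(t-\tau,y)\le (1+\varrho t)\max\bigl(1,\norm{{\RVI}_{0}}_{V^{*}}\bigr)V^{*}(y)
\qquad\forall (\tau,y)\in\overline{\RR^{d}_{t}}.
\end{equation*}

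Under $\Hv^{t}$ only the direction $\ge$ is needed, so adding $\varrho t$ to render the boundary term nonnegative and applying Fatou's lemma at once yields $\liminf_{R}\Exp^{\Hv^{t}}_{x}[{\VI}(t-\sigma_{R},X_{\sigma_{R}})]\ge\Exp^{\Hv^{t}}_{x}[{\VI}(t-\tc\wedge t,X_{\tc\wedge t})]$, which combined with the monotone convergence of the integral delivers \eqref{E-hit-2}. The principal technical obstacle is the $v^{*}$ case, where an actual $L^{1}$ convergence of the boundary term is required to take a \emph{limsup} on the right of the localized inequality: here I would use Dynkin's formula together with the HJB identity $\Lg^{v^{*}}V^{*}=\varrho-r(\cdot,v^{*}(\cdot))$ to verify that $\Exp^{v^{*}}_{x}[V^{*}(X_{\sigma_{R}})]\to\Exp^{v^{*}}_{x}[V^{*}(X_{\tc\wedge t})]$ as $R\to\infty$, and then invoke Scheff\'e's lemma---the a.s.\ convergence combined with convergence of the means of nonnegative random variables upgrades to $L^{1}$ convergence---to transfer this through the $V^{*}$-domination to the ${\VI}$ boundary term. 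The uniform moment control of Lemma~\ref{L3.4} supplies the bounds that justify these interchanges of limits and yields \eqref{E-hit-1}.
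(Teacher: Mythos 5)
Your overall architecture coincides with the paper's: localize on $B_{R}$, use the PDE to get a one-sided (resp.\ exact) Dynkin identity under $v^{*}$ (resp.\ $\Hv^{t}$), pass to the limit with monotone convergence for the running-cost integral, and use Fatou together with the uniform lower bound ${\VI}(t-s,\cdot\,)\ge-\varrho\,t$ for the $\Hv^{t}$ direction. That half, and the localized step, are fine.

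The gap is in the $v^{*}$ direction, exactly where you located the difficulty. You claim that Dynkin's formula together with the HJB identity ``verifies'' that $\Exp^{v^{*}}_{x}[V^{*}(X_{\sigma_{R}})]\to\Exp^{v^{*}}_{x}[V^{*}(X_{\tc\wedge t})]$, with $\sigma_{R}=\tc\wedge\uptau_{R}\wedge t$. What Dynkin actually gives is
\[
\Exp^{v^{*}}_{x}\bigl[V^{*}(X_{\sigma_{R}})\bigr]
= V^{*}(x)-\Exp^{v^{*}}_{x}\left[\int_{0}^{\sigma_{R}}\Or\bigl(X_{s},v^{*}(X_{s})\bigr)\,\D{s}\right]
\xrightarrow[R\to\infty]{} V^{*}(x)-\Exp^{v^{*}}_{x}\left[\int_{0}^{\tc\wedge t}\Or\bigl(X_{s},v^{*}(X_{s})\bigr)\,\D{s}\right],
\]
so the means do converge; but identifying the limit with $\Exp^{v^{*}}_{x}[V^{*}(X_{\tc\wedge t})]$ is equivalent, via the decomposition
$\Exp[V^{*}(X_{\sigma_{R}})]=\Exp[V^{*}(X_{\tc\wedge t})\,\Ind\{\uptau_{R}>\tc\wedge t\}]+\Exp[V^{*}(X_{\uptau_{R}})\,\Ind\{\uptau_{R}\le\tc\wedge t\}]$,
to the escape estimate $\Exp^{v^{*}}_{x}[V^{*}(X_{\uptau_{R}})\,\Ind\{\uptau_{R}\le t\}]\to0$. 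Fatou only yields $\Exp[V^{*}(X_{\tc\wedge t})]\le\lim_{R}\Exp[V^{*}(X_{\sigma_{R}})]$, which is the direction you do not need: if $V^{*}$-mass escaped through $\partial B_{R}$, the boundary term in your localized upper bound would not converge down to the target and \eqref{E-hit-1} would fail to follow. This vanishing is a genuinely nontrivial property of the optimal pair $(V^{*},v^{*})$ under near-monotone costs (it is false for general controls and for solutions of \eqref{E-HJBbeta} with $\beta>\varrho$, which is precisely the source of non-uniqueness there); the paper imports it from \cite[Corollary~3.7.3]{book} and combines it with \eqref{E-Vest} to show directly that $\Exp^{v^{*}}_{x}[{\VI}(t-\uptau_{R},X_{\uptau_{R}})\,\Ind\{\uptau_{R}\le\tc\wedge t\}]\to0$. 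Once that estimate is granted, your Scheff\'e/generalized-dominated-convergence route does work, but it is an external input, not a consequence of Dynkin's formula.
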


\begin{proof}
Let $B_{R}$ be any ball that contains $\overline{B}_{0}$ and
for $n\in\NN$, let $\uptau_{n}$ denote the first exit
time from $B_{nR}$.
Using Dynkin's formula on \eqref{EVI}, we obtain
\begin{equation}\label{E-hit-3}
{\VI}(t,x) =\inf_{U\in\Uadm}\;\Exp_{x}^{U}
\left[\int_{0}^{\tc\wedge\uptau_{n}\wedge{t}}\Or(X_{s},U_{s})\,\D{s}
+{\VI}(t-\tc\wedge\uptau_{n}\wedge{t},X_{\tc\wedge\uptau_{n}\wedge{t}})\right]
\end{equation}
for $x\in \Bo^{c}$.
By \eqref{E-hit-3} we have
\begin{align}\label{E-hit-4}
{\VI}(t,x) &\le \Exp_{x}^{v^{*}}
\left[\int_{0}^{\tc\wedge\uptau_{n}\wedge{t}}r\bigl(X_{s},v^{*}(X_{s})\bigr)
\,\D{s}\right]
- \varrho\,\Exp_{x}^{v^{*}}[\tc\wedge\uptau_{n}\wedge{t}]
\\[5pt]\nonumber
&\mspace{250mu}
+ \Exp_{x}^{v^{*}}\bigl[
{\VI}(t-\tc\wedge\uptau_{n}\wedge{t},X_{\tc\wedge\uptau_{n}\wedge{t}})\bigr]\,.
\end{align}
We use the expansion
\begin{align*}
\Exp_{x}^{v^{*}}\bigl[
{\VI}(t-\tc\wedge\uptau_{n}\wedge{t},X_{\tc\wedge\uptau_{n}\wedge{t}})\bigr]
&=
\Exp_{x}^{v^{*}}\bigl[
{\VI}(t-\tc\wedge{t},X_{\tc\wedge{t}})\,
\Ind\{\uptau_{n}>\tc\wedge{t}\}\bigr]
\nonumber\\[5pt]
&\mspace{80mu}
+ \Exp_{x}^{v^{*}}\bigl[{\VI}(t-\uptau_{n},X_{\uptau_{n}})\,
\Ind\{\uptau_{n}\le\tc\wedge{t}\}\bigr]\,.
\end{align*}
By \eqref{E-Vest} and the fact that,
as shown in \cite[Corollary~3.7.3]{book},
\begin{equation*}
\Exp_{x}^{v^{*}}\bigl[V^{*}(X_{\uptau_{n}})\,\Ind\{\uptau_{n}\le t\}\bigr]
\xrightarrow[n\to\infty]{}0\,,
\end{equation*}
we obtain
\begin{equation*}
\Exp_{x}^{v^{*}}\bigl[{\VI}(t-\uptau_{n},X_{\uptau_{n}})\,
\Ind\{\uptau_{n}\le\tc\wedge{t}\}\bigr]
\xrightarrow[n\to\infty]{}0\,.
\end{equation*}
Therefore by taking limits as $n\to\infty$ in \eqref{E-hit-4} and also using
monotone convergence for the first two terms on the r.h.s., we obtain
\eqref{E-hit-1}.

To obtain a lower bound we start from
\begin{equation}\label{E-hit-5}
{\VI}(t,x) =\Exp_{x}^{\Hv^{t}}\left[
\int_{0}^{\tc\wedge\uptau_{n}\wedge{t}}\Or\bigl(X_{s},\Hv^{t}_{s}(X_{s})\bigr)
\,\D{s}
+{\VI}(t-\tc\wedge\uptau_{n}\wedge{t},X_{\tc\wedge\uptau_{n}\wedge{t}})\right]\,.
\end{equation}
Since for any fixed $t$ the functions $\bigl\{{\VI}(t-s,x) : s\le t\bigr\}$
are uniformly bounded below, taking limits in \eqref{E-hit-5} as $n\to\infty$, we
obtain \eqref{E-hit-2}.
\qquad
\end{proof}

\smallskip
\begin{lemma}\label{L4.5}
Suppose ${\RVI}_{0}\in\order_{V^{*}}$.
Then for any $t>0$ we have
\begin{equation*}
{\VI}(t,x) > \min\;\biggl(\min_{[0,t]\times\overline{B}_{0}} \;{\VI}\,,
~\min_{\RR^{d}}\;{\RVI}_{0}\Bigr)\qquad \forall x\in \Bo^{c}\,.
\end{equation*}
\end{lemma}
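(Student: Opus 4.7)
The plan is to apply the stochastic lower bound \eqref{E-hit-2} and split the expectation according to whether the process has exited $\Bo^{c}$ by time $t$ or not, then exploit the strict positivity of the running cost excess $\Or$ on $\Bo^{c}$.

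First, fix $t>0$ and $x\in\Bo^{c}$, and use \eqref{E-hit-2} to write
\begin{equation*}
\VI(t,x) \ge \Exp_{x}^{\Hv^{t}}\!\left[\int_{0}^{\tc\wedge t}
\Or\bigl(X_{s},\Hv^{t}_{s}(X_{s})\bigr)\,\D{s}
+ \VI\bigl(t-\tc\wedge t,X_{\tc\wedge t}\bigr)\right].
\end{equation*}
Since $X_{s}\in\Bo^{c}$ for $s<\tc$, by the definition of $\delta_{0}$ we have
$\Or(X_{s},\Hv^{t}_{s}(X_{s}))\ge\delta_{0}$ on that set, so the integral term
is bounded below by $\delta_{0}\,(\tc\wedge t)$.

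Next, split the terminal term. On the event $\{\tc<t\}$ we have
$X_{\tc}\in\partial\Bo\subset\overline{\Bo}$ and $t-\tc\in[0,t]$, so
$\VI(t-\tc,X_{\tc})\ge\min_{[0,t]\times\overline{\Bo}}\VI$. On the
complementary event $\{\tc\ge t\}$ the path has not yet exited $\Bo^{c}$, so
$X_{t}\in\RR^{d}$ and $\VI(0,X_{t})=\RVI_{0}(X_{t})\ge\min_{\RR^{d}}\RVI_{0}$.
Thus, $\Prob_{x}^{\Hv^{t}}$-a.s.,
\begin{equation*}
\VI\bigl(t-\tc\wedge t,X_{\tc\wedge t}\bigr)
\ge \min\Bigl(\min_{[0,t]\times\overline{\Bo}}\VI,~\min_{\RR^{d}}\RVI_{0}\Bigr).
\end{equation*}
Combining these two bounds gives
\begin{equation*}
\VI(t,x) \ge \delta_{0}\,\Exp_{x}^{\Hv^{t}}[\tc\wedge t]
+ \min\Bigl(\min_{[0,t]\times\overline{\Bo}}\VI,~\min_{\RR^{d}}\RVI_{0}\Bigr).
\end{equation*}

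The only remaining point is strictness, which reduces to showing
$\Exp_{x}^{\Hv^{t}}[\tc\wedge t]>0$. Since $x\in\Bo^{c}$ is an interior point
of $\Bo^{c}$ and the paths of the diffusion are continuous, there is a
$\Prob_{x}^{\Hv^{t}}$-a.s.\ positive amount of time during which
$X_{s}\in\Bo^{c}$, hence $\tc>0$ a.s.; combined with $t>0$ this gives
$\tc\wedge t>0$ a.s., and the expectation is strictly positive. The main
(minor) obstacle is only a bookkeeping one: making sure the two cases
$\{\tc<t\}$ and $\{\tc\ge t\}$ cover everything correctly so that the pointwise
a.s.\ bound above really holds, which is straightforward given that $\VI$ is
continuous on $[0,t]\times\overline{\Bo}$ and the initial data $\RVI_{0}$ is
continuous on $\RR^{d}$.
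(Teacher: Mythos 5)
Your proof is correct and follows essentially the same route as the paper: apply the lower bound \eqref{E-hit-2}, bound the running-cost integrand below by $\delta_{0}$ on $\Bo^{c}$, split the terminal term on $\{\tc<t\}$ versus $\{\tc\ge t\}$, and obtain strictness from $\Exp_{x}^{\Hv^{t}}[\tc\wedge t]>0$ for $x$ interior to $\Bo^{c}$. The paper's proof is the same argument (with the split at $\{\tc\le t\}$ instead of $\{\tc<t\}$, which is immaterial).
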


\begin{proof}
Let $x$ be any point
in the interior of $\Bo^{c}$.
By \eqref{E-hit-2} we have
\begin{align*}
{\VI}(t,x) &\ge\Exp_{x}^{\Hv^{t}}\left[
\int_{0}^{\tc\wedge{t}}\Or\bigl(X_{s},\Hv^{t}_{s}(X_{s})\bigr)\,\D{s}
+{\VI}(t-\tc\wedge{t},X_{\tc\wedge{t}})\right]
\\[5pt]
&\ge \delta_{0} \Exp_{x}^{\Hv^{t}}[\tc\wedge{t}]
+ \Prob_{x}^{\Hv^{t}}(\tc \le t)\,
\min_{[0,t]\times\overline{B}_{0}} \;{\VI}
+ \Prob_{x}^{\Hv^{t}}(\tc > t)\,
\min_{\RR^{d}}\;{\RVI}_{0}\,,
\end{align*}
and the result follows.
\qquad
\end{proof}

\begin{remark}\label{R-crucial}
By Lemma~\ref{L4.5}, if $\inf_{[0,\infty)\times\overline{B}_{0}} \;{\VI}>-\infty$,
then ${\VI}$ is bounded below on $[0,\infty)\times\RR^{d}$.
If this the case, the convergence of the VI and therefore also of the RVI
follows as in the proof of Theorem~\ref{T3.10}.
Therefore without loss of generality we assume for the remainder
of the paper that $\inf_{[0,\infty)\times\overline{B}_{0}} \;{\VI}=-\infty$.
By Lemma~\ref{L4.5}, this implies that there exists $T_{0}>0$ such that
\begin{equation*}
{\VI}(t,x) > \min_{[0,t]\times\overline{B}_{0}} \;{\VI}
\qquad \forall t\ge T_{0}\,,~\forall x\in \Bo^{c}\,.
\end{equation*}
\end{remark}

We use the parabolic Harnack inequality which we quote
in simplified form from the more general result in \cite[Theorem~4.1]{Gruber}
as follows:

\begin{theorem}[Parabolic Harnack]\label{T-Harnack}
Let $B_{2R}\subset\RR^{d}$ be an open ball,
and $\psi$ be a nonnegative caloric function, i.e., a nonnegative solution of
\begin{equation*}
\partial_{t}\psi(t,x) - a^{ij}(t,x)\, \partial_{ij}\psi(t,x)
+b^{i}(t,x)\,\partial_{i}\psi(t,x)=0\quad\text{on~} [0,T]\times B_{2R}\,,
\end{equation*}
with $a^{ij}(t,x)$ continuous in $x$ and uniformly nondegenerate on
$[0,T]\times B_{2R}$, and $a^{ij}$ and $b^{i}$ bounded on $[0,T]\times B_{2R}$.
Then for any $\tau\in\bigl(0,\nicefrac{T}{4}\bigr]$,
there exists a constant $C_{H}$ depending only on $R$, $\tau$, and the ellipticity
constant (and modulus of continuity) of $a^{ij}$ and the bound of
$a^{ij}$ and $b^{i}$ on $B_{2R}$, such that
\begin{equation*}
\max_{[T-3\tau,T-2\tau]\times B_{R}}\;\psi
\le C_{H}\;\min_{[T-\tau,T]\times B_{R}}\;\psi\,.
\end{equation*}
\end{theorem}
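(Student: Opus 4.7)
The plan is essentially to invoke the classical parabolic Harnack inequality for linear non-divergence form operators with uniformly elliptic, continuous principal coefficients and bounded drift, as developed through the Krylov--Safonov program. The statement as written is an almost verbatim specialization of the more general result in \cite{Gruber}, so in the paper itself one would simply cite that reference and verify that our hypotheses on $a^{ij}$ (uniform nondegeneracy plus continuity in $x$) and on $b^{i}$ (boundedness) indeed fall within its scope, and that the asserted dependence of $C_{H}$ on $R$, $\tau$, the ellipticity constant, the modulus of continuity of $a^{ij}$, and the $L^{\infty}$ bounds on the coefficients matches Gruber's formulation.

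If one wished to reconstruct a proof rather than cite it, the route would be the standard Krylov--Safonov one. First I would establish the parabolic Alexandrov--Bakelman--Pucci (ABP) maximum principle via the parabolic normal map, yielding a sharp $L^{d+1}$ estimate for supersolutions in terms of the right-hand side on the contact set. Second, I would use the ABP estimate together with a suitable barrier tailored to the backward/forward cylinder geometry to prove the fundamental measure-decay lemma: if $\psi\ge 0$ is caloric with $\inf \psi \le 1$ on the forward cylinder $[T-\tau,T]\times B_{R}$, then the superlevel set $\{\psi > M\}$ occupies at most a fraction $\gamma<1$ of the earlier cylinder $[T-3\tau,T-2\tau]\times B_{R}$, for some $M,\gamma$ depending only on the structural constants. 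Third, iterating this estimate produces the geometric decay $\babs{\{\psi>M^{k}\}\cap K}\le \gamma^{k}\abs{K}$, and a parabolic Calder\'on--Zygmund covering argument converts this into the two-sided pointwise bound claimed in the theorem.

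The genuine obstacle in an independent proof would be the second step, namely the explicit construction of a parabolic barrier adapted to the cylinder geometry that makes the passage from ABP to the measure-decay lemma quantitative, with constants depending in a controlled way on $R$ and $\tau$ separately. In our setting this is entirely circumvented because $a^{ij}(x)$ is actually continuous (hence uniformly continuous on $\overline{B}_{2R}$), which is strictly stronger than the measurability assumed in \cite{Gruber}; Gruber's theorem applies directly and no adaptation is required. Accordingly, my proof in the paper would consist of verifying these hypotheses and invoking \cite[Theorem~4.1]{Gruber}.
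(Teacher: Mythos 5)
Your proposal matches the paper exactly: the theorem is stated as a quotation, in simplified form, of \cite[Theorem~4.1]{Gruber}, and no independent proof is given. Your verification that the hypotheses (uniform nondegeneracy, continuity of $a^{ij}$ in $x$, boundedness of the coefficients) place the statement within the scope of Gruber's result, together with the optional Krylov--Safonov sketch, is entirely consistent with the paper's treatment.
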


In the three lemmas that follow we apply Theorem~\ref{T-Harnack} with $\tau\equiv 1$
and $\Bo'= 2 \Bo$.

\begin{lemma}\label{L-Harn1}
There exists a constant $M_{0}$ such that
\begin{equation*}
\max_{[T-3,T-2]\times\Bo}\; {\VI}
- \min_{[0,T]\times\Bo}\;{\VI}\le M_{0}
+C_{H}\Bigl(\min_{[T-1,T]\times\Bo}\;{\VI}
-\min_{[0,T]\times\Bo}\;{\VI}\Bigr)
\qquad\forall~ T\ge T_{0}+4\,.
\end{equation*}
\end{lemma}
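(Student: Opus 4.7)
The plan is to sandwich $w\df\VI-m$, where $m\df \min_{[0,T]\times\Bo}\VI$, between a nonnegative caloric function $\psi_{T}$ on the larger cylinder $[0,T]\times\Bo'$ and an error $\phi_{T}\df w-\psi_{T}$ driven only by the bounded source $r-\varrho$; then apply the parabolic Harnack inequality to $\psi_{T}$ and absorb the error as an additive constant.

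First I would fix $T\ge T_{0}+4$. By Remark~\ref{R-crucial} combined with Lemma~\ref{L4.5}, one has $\VI(t,x)>m$ for every $(t,x)\in [0,T]\times\Bo^{c}$, so $w\ge 0$ on $[0,T]\times\RR^{d}$, and in particular on $[0,T]\times\Bo'$. Taking a measurable selector $\Hv$ from the minimizer as in Definition~\ref{D-minimizer}, the VI becomes the linear parabolic equation $\partial_{t}\VI - \Lg^{\Hv_{t}}\VI = r(x,\Hv_{t}(x))-\varrho$. On the cylinder $[0,T]\times\Bo'$ I then introduce $\psi_{T}\in\Sob^{1,2,p}([0,T]\times\Bo')$ as the unique solution of the corresponding \emph{homogeneous} Cauchy--Dirichlet problem whose parabolic boundary data coincides with $w$. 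The parabolic maximum principle gives $\psi_{T}\ge 0$, and $\phi_{T}\df w-\psi_{T}$ vanishes on the parabolic boundary and satisfies the inhomogeneous equation with source $r-\varrho$.

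The key quantitative step is to bound $|\phi_{T}|$ on $[0,T]\times\Bo$ by a constant $M_{0}'$ independent of $T$. By Krylov's extension of It\^o's formula, $\phi_{T}$ admits the Feynman--Kac representation
\begin{equation*}
\phi_{T}(t,x) = \Exp^{\Hv^{t}}_{x}\!\left[\int_{0}^{\uptau(\Bo')\wedge t}\bigl(r(X_{s},\Hv^{t}_{s}(X_{s})) - \varrho\bigr)\,\D{s}\right].
\end{equation*}
Since $r$ is bounded on $\overline{\Bo'}\times\Act$ and the coefficients of the SDE are bounded and uniformly elliptic on $\overline{\Bo'}$, the expected exit time $\Exp^{\Hv^{t}}_{x}[\uptau(\Bo')]$ is bounded uniformly in $x\in\Bo$, $t>0$, and the measurable selector $\Hv$ (for instance by a Dynkin comparison with a quadratic Lyapunov function supported on $\overline{\Bo'}$). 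The estimate $|\phi_{T}|\le M_{0}'$ on $[0,T]\times\Bo$ follows.

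Applying Theorem~\ref{T-Harnack} to $\psi_{T}$ with $\tau=1$ gives $\max_{[T-3,T-2]\times\Bo}\psi_{T}\le C_{H}\min_{[T-1,T]\times\Bo}\psi_{T}$, and combining with the two-sided bound $|w-\psi_{T}|\le M_{0}'$ on $[0,T]\times\Bo$ yields
\begin{equation*}
\max_{[T-3,T-2]\times\Bo}\VI - m \;\le\; (1+C_{H})M_{0}' + C_{H}\Bigl(\min_{[T-1,T]\times\Bo}\VI - m\Bigr),
\end{equation*}
which is the desired inequality with $M_{0}\df (1+C_{H})M_{0}'$. The main obstacle is purely technical: the drift $b^{i}(x,\Hv_{t}(x))$ is only measurable in $t$, so $\psi_{T}$ cannot in general be taken classical, forcing the $\Sob^{1,2,p}$ framework for the auxiliary problem; fortunately Theorem~\ref{T-Harnack} requires only boundedness of $b^{i}$ in addition to the continuity and uniform ellipticity of $a^{ij}$ in $x$, so no additional hypothesis is required.
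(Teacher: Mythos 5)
Your proposal is correct and follows essentially the same route as the paper: the paper likewise introduces the caloric function $\psi_{T}$ on $[0,T]\times\Bo'$ with boundary data taken from $\VI$, bounds the difference $\overline{\psi}_{T}=\psi_{T}-\VI$ uniformly in $T$ by a Feynman--Kac representation together with the Alexandroff bound on mean exit times from $\Bo'$, and then applies Theorem~\ref{T-Harnack} with $\tau=1$, arriving at the same constant $M_{0}=(C_{H}+1)\overline{M}_{0}$. The only cosmetic difference is that you subtract $\min_{[0,T]\times\Bo}\VI$ from the boundary data at the outset rather than at the end.
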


{\em Proof}.
Let $\psi_{T}(t,x)$ be the unique
solution in $\Sobl^{1,2,p}\bigl((0,T)\times\Bo'\bigr)\cap
\Cc\bigl([0,T]\times\overline{B}_{0}'\bigr)$ of
\begin{gather*}
\partial_{t}\psi_{T}(t,x) - a^{ij}(x)\, \partial_{ij}\psi_{T}(t,x)
-b^{i}\bigl(x,\Hv^{T}_{t}(x)\bigr)\,\partial_{i}\psi_{T}(t,x)
=0\qquad\text{on}\;\; [0,T]\times\Bo'\,,
\\[5pt]
\psi_{T}(t,x) = {\VI}(t,x)
\qquad\text{on}\;\;
\bigl([0,T]\times\partial \Bo'\bigr)
\cup \bigl(\{0\}\times \overline{B}_{0}'\bigr)\,.
\end{gather*}
Since $\overline{\psi}_{T} \df \psi_{T} - {\VI}$ satisfies
\begin{equation*}
\partial_{t}\overline{\psi}_{T}(t,x) - a^{ij}(x)\,
\partial_{ij}\overline{\psi}_{T}(t,x)
-b^{i}\bigl(x,\Hv^{T}_{t}(x)\bigr)\,\partial_{i}\overline{\psi}_{T}(t,x)
+ \Or\bigr(x,\Hv^{T}_{t}(x)\bigr)=0
\end{equation*}
on $[0,T]\times\Bo'$, and
\begin{equation*}
\overline{\psi}_{T}(t,x) = 0\qquad\text{on}\;\;
\bigl([0,T]\times\partial \Bo'\bigr)
\cup \bigl(\{0\}\times \overline{B}_{0}'\bigr)\,,
\end{equation*}
it follows that there exists a constant $\overline{M}_{0}$ which depends only on
$\Bo'$
(it is independent of $T$) such that
\begin{equation}\label{E-Aleks}
\sup_{[0,T]\times\Bo'}\;\babs{\overline{\psi}_{T}}\le \overline{M}_{0}
\qquad\forall T>0\,.
\end{equation}
Indeed this is so because with $\uptau(\Bo')$ denoting the first exit time
from $\Bo'$ and with $\Hv^{T}$ as defined in Definition~\ref{D-minimizer}
we have
\begin{align*}
\babs{\overline{\psi}_{T} (t,x)} &= \left|\,\Exp^{\Hv^{T}}_{x}
\left[\int_{0}^{(T-t)\wedge\uptau(\Bo')}
\Or\bigl(X_{s},\Hv^{T}_{T-t-s}(X_{s})\bigr)\,\D{s}\right]\right|
\nonumber\\[5pt]
&\le \sup_{U\in\Uadm}\;\Exp^{U}_{x}
\left[\int_{0}^{\uptau(\Bo')}\babs{\Or(X_{s},U_{s})}\,\D{s}\right]
\nonumber\\[5pt]
&\le \norm{\Or}_{\infty,\Bo'}\;
\sup_{x\in \Bo'}\;\sup_{U\in\Uadm}\;\Exp^{U}_{x}[\uptau(\Bo')] < \infty\,,
\end{align*}
since the mean exit time from $\Bo'$ is upper bounded by a constant uniformly
over all initial $x\in \Bo'$ and all controls $U\in\Uadm$ by the
weak maximum principle of Alexandroff.

Let $(\Hat{t},\Hat{x})$ be a point at which ${\VI}$ attains its
minimum on $[T-1,T]\times\Bo$.
By Lemma~\ref{L4.5} and Remark~\ref{R-crucial}
the function $(t,x)\mapsto \psi_{T}(t,x) - \min_{[0,T]\times\Bo}\;{\VI}$
is nonnegative on $[T-4,T]\times\Bo'$, and by Theorem~\ref{T-Harnack} we have
\begin{align}\label{E-Harn1-1}
\psi_{T}(t,x) - \min_{[0,T]\times\Bo}\;{\VI}
&\le C_{H}
\Bigl(\psi_{T}(\Hat{t},\Hat{x}) - \min_{[0,T]\times\Bo}\;{\VI}\Bigr)
\\[5pt]\nonumber
&\le C_{H}
\Bigl(\overline{\psi}_{T}(\Hat{t},\Hat{x}) 
+ \min_{[T-1,T]\times\Bo}\;{\VI} - \min_{[0,T]\times\Bo}\;{\VI}\Bigr)
\end{align}
for all $t\in[T-3, T-2]$ and $x\in \Bo$\,.
Expressing the left hand side of \eqref{E-Harn1-1} as
\begin{equation*}
{\VI}(t,x) - \min_{[0,T]\times\Bo}\;{\VI}
+\overline{\psi}_{T}(t,x)\,,
\end{equation*}
and using \eqref{E-Aleks}, Lemma~\ref{L-Harn1} follows with 
\begin{equation*}
M_{0}\df(C_{H}+1)\overline{M}_{0}\,.\qquad\endproof
\end{equation*}

\begin{lemma}\label{L-outer}
Provided ${\RVI}_{0}\in\Cc^{2}(\RR^{d})$ is nonnegative and bounded, we have
\begin{equation*}
{\VI}(t,x) - \max_{\partial{B}_{0}}\;{\VI}(t,\cdot\,)
\le 2\,\norm{{\RVI}_{0}}_{\infty} + \bigl(1 + \varrho\,\delta_{0}^{-1}\bigr) V^{*}(x)
\qquad \forall x\in \Bo^{c}\,.
\end{equation*}
\end{lemma}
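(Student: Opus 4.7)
The plan is to run the process under the optimal stationary control $v^{*}$ until it either hits $\partial\Bo$ or runs out of time, and apply the hitting-time inequality \eqref{E-hit-1} to reduce $\VI(t,x)$ to a running-cost integral plus an expectation of a terminal value. The running-cost integral will be handled by Dynkin's formula applied to $V^{*}$, via the Poisson identity $\Lg^{v^{*}}V^{*}=-\Or(\cdot,v^{*}(\cdot))$ that follows from \eqref{E-HJB} and \eqref{E-v*}, while the terminal value will be dominated by $M(t)\df\max_{\partial\Bo}\VI(t,\cdot\,)$ using Lemma~\ref{L-nice}. With $\tau\df\tc\wedge t$, inequality \eqref{E-hit-1} yields
\[
\VI(t,x)\le \Exp_{x}^{v^{*}}\left[\int_{0}^{\tau}\Or\bigl(X_{s},v^{*}(X_{s})\bigr)\,\D s\right]+\Exp_{x}^{v^{*}}\bigl[\VI(t-\tau,X_{\tau})\bigr],
\]
and a routine Dynkin localization (using $V^{*}\ge 1$) gives $\Exp_{x}^{v^{*}}\bigl[\int_{0}^{\tau}\Or\,\D s\bigr]=V^{*}(x)-\Exp_{x}^{v^{*}}[V^{*}(X_{\tau})]\le V^{*}(x)$. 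Since $\Or\ge\delta_{0}$ on $\Bo^{c}$ and the $v^{*}$-process stays in $\Bo^{c}$ throughout $[0,\tau]$, the same identity also yields $\Exp_{x}^{v^{*}}[\tau]\le\delta_{0}^{-1}V^{*}(x)$.

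The key step is the pathwise bound
\[
\VI(t-\tau,X_{\tau})\le M(t)+\varrho\,\tau+2\norm{\RVI_{0}}_{\infty},
\]
which I would prove by splitting on whether the process hits $\partial\Bo$ before time $t$. On $\{\tc\le t\}$, $X_{\tau}=X_{\tc}\in\partial\Bo$, so Lemma~\ref{L-nice} (together with $\osc_{\RR^{d}}\RVI_{0}\le\norm{\RVI_{0}}_{\infty}$, which holds because $\RVI_{0}\ge 0$) yields $\VI(t-\tc,X_{\tc})\le \VI(t,X_{\tc})+\varrho\,\tc+\norm{\RVI_{0}}_{\infty}\le M(t)+\varrho\,\tau+\norm{\RVI_{0}}_{\infty}$. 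On $\{\tc>t\}$, $\tau=t$ and $\VI(t-\tau,X_{\tau})=\RVI_{0}(X_{t})\le\norm{\RVI_{0}}_{\infty}$, so the claimed inequality reduces to $M(t)+\varrho\,t\ge -\norm{\RVI_{0}}_{\infty}$. Once this pathwise bound is in hand, taking expectation under $v^{*}$ and combining with the estimates of the previous paragraph will produce
\[
\VI(t,x)-M(t)\le(1+\varrho\,\delta_{0}^{-1})V^{*}(x)+2\norm{\RVI_{0}}_{\infty},
\]
as desired.

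The delicate point is the $\{\tc>t\}$ case above: on that event the diffusion has not yet visited $\partial\Bo$, so there is no direct way to dominate the terminal penalty $\RVI_{0}(X_{t})$ by $M(t)$. The remedy I would invoke is the crude but universal lower bound $\VI(t,\cdot\,)\ge -\varrho\,t$, which is immediate from the stochastic representation \eqref{E-SRa} combined with $r\ge 0$ and $\RVI_{0}\ge 0$. This forces $M(t)\ge -\varrho\,t$, which is precisely what is needed to absorb $\norm{\RVI_{0}}_{\infty}$ and is responsible for the factor of $2$ in front of $\norm{\RVI_{0}}_{\infty}$ in the final estimate.
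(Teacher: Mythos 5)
Your proposal is correct and follows essentially the same route as the paper's proof: both start from \eqref{E-hit-1} under $v^{*}$, bound the running-cost integral by $V^{*}(x)$, use Lemma~\ref{L-nice} together with $\osc_{\RR^{d}}\RVI_{0}\le\norm{\RVI_{0}}_{\infty}$ on the event $\{\tc\le t\}$, and invoke $\VI(t,\cdot\,)\ge-\varrho\,t$ and $\Exp_{x}^{v^{*}}[\tc]\le\delta_{0}^{-1}V^{*}(x)$ to absorb the $\{\tc>t\}$ contribution. The only difference is cosmetic: you establish a pathwise bound before taking expectations, whereas the paper splits the terms after taking expectations.
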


{\em Proof}.
By Lemma~\ref{L-nice}
\begin{equation}\label{E-outer-a}
{\VI}(t-\tau,x) \le {\VI}(t,x) +\varrho\,\tau + \osc_{\RR^{d}}\;{\RVI}_{0}
\qquad \forall x\in \Bo\,,\quad 0\le\tau\le t\,.
\end{equation}
Therefore by \eqref{E-hit-1} and \eqref{E-outer-a}, using the fact
that $\Or\ge0$ on $\Bo^{c}$, we obtain
\begin{align}\label{E-outer-1}
{\VI}(t,x) &\le\Exp_{x}^{v^{*}}
\left[\int_{0}^{\tc\wedge{t}}
\Or\bigl(X_{s},v^{*}(X_{s})\bigr)\,\D{s}
+{\VI}(t-\tc\wedge{t},X_{\tc\wedge{t}})\right]
\\[5pt]
&\le \Exp_{x}^{v^{*}}
\left[\int_{0}^{\tc}
\Or\bigl(X_{s},v^{*}(X_{s})\bigr)\,\D{s}\right]
+ \Exp_{x}^{v^{*}}\bigl[{\VI}(t-\tc,X_{\tc})\;\Ind\{\tc \le t\}\bigr]
\nonumber\\[5pt]
&\mspace{300mu}+\Exp_{x}^{v^{*}}\bigl[{\RVI}_{0}(X_{t})\;\Ind\{\tc > t\}\bigr]
\nonumber\\[5pt]
&\le V^{*}(x) + \Exp_{x}^{v^{*}}\bigl[{\VI}(t,X_{\tc})\,\Ind\{\tc \le t\}\bigr]
\nonumber\\[5pt]
&\mspace{200mu}+\varrho \Exp_{x}^{v^{*}}\bigl[\tc\,\Ind\{\tc \le t\}\bigr]
+ \osc_{\RR^{d}}\;{\RVI}_{0} + \norm{{\RVI}_{0}}_{\infty}
\nonumber\\[5pt]
&\le V^{*}(x) + \Prob_{x}^{v^{*}}\bigl(\{\tc \le t\}\bigr)\,
\Bigl(\max_{\partial\Bo}\;{\VI}(t,\cdot\,)\Bigr)
\nonumber\\[5pt]\nonumber
&\mspace{250mu} +\varrho \Exp_{x}^{v^{*}}\bigl[\tc\,\Ind\{\tc \le t\}\bigr]
+ 2\,\norm{{\RVI}_{0}}_{\infty}\,,
\end{align}
for $x\in \Bo^{c}$.
Since $-{\VI}(t,x)\le \varrho\, t$, we have
\begin{align}\label{E-outer-2}
- \Prob_{x}^{v^{*}}\bigl(\{\tc > t\}\bigr)\,
\Bigl(\max_{\partial\Bo}\;{\VI}(t,\cdot\,)\Bigr)
&\le \varrho\Prob_{x}^{v^{*}}\bigl(\{\tc > t\}\bigr)\,t
\\[5pt]\nonumber
&\le \varrho \Exp_{x}^{v^{*}}\bigl[\tc\,\Ind\{\tc > t\}\bigr]\,.
\end{align}
Hence subtracting $\max_{\partial\Bo}\;{\VI}(t,\cdot\,)$ from both
sides of \eqref{E-outer-1} and using \eqref{E-outer-2} together
with the estimate
$\Exp_{x}^{v^{*}}[\tc]\le \delta_{0}^{-1}V^{*}(x)$, we obtain
\begin{equation*}
{\VI}(t,x) - \max_{\partial\Bo}\;{\VI}(t,\cdot\,)
\le V^{*}(x) 
+\varrho\,\delta_{0}^{-1}V^{*}(x) + 2\,\norm{{\RVI}_{0}}_{\infty}\,.\qquad\endproof
\end{equation*}

We define the set $\cT\subset\RR_{+}$ by
\begin{equation*}
\cT \df \Bigl\{ t\ge T_{0}+4 :
\min_{[t-1,t]\times\Bo}\;{\VI} = \min_{[0,t]\times\Bo}{\VI}\Bigr\}\,,
\end{equation*}
where $T_{0}$ is as in Remark~\ref{R-crucial}.
By Remark~\ref{R-crucial}, $\cT\ne\varnothing$.

\begin{lemma}\label{L-inner}
Let Assumption~\ref{A3.2} hold and
suppose that the initial condition ${\RVI}_{0}\in\Cc^{2}(\RR^{d})$
is nonnegative and bounded.
Then there exists a constant $C_{0}$ such that
\begin{equation*}
\osc_{\Bo}\;{\VI}(t,\cdot\,)
\le C_{0}\qquad \forall t\ge0\,.
\end{equation*}
\end{lemma}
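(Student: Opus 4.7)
The plan is to bound $\osc_{\Bo}\VI(t,\cdot)$ at a single time by propagating the parabolic Harnack estimate of Lemma~\ref{L-Harn1} to time $t$, via the dynamic programming representation with the optimal control $v^{*}$, Lemma~\ref{L-outer}, and the moment bound of Lemma~\ref{L3.4}.

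Set $m(s)\df\min_{\Bo}\VI(s,\cdot)$ and $M(t)\df\min_{[0,t]\times \Bo}\VI$, and for $t\ge T_{0}+4$ define
\begin{equation*}
s^{*}(t) \df \inf\bigl\{s\in[0,t]:m(s)=M(t)\bigr\}\,,
\end{equation*}
which is attained by continuity of $\VI$. Since $M$ is nonincreasing, $M(s^{*}(t))=m(s^{*}(t))=M(t)$, so the running minimum at time $s^{*}(t)$ is attained at $s^{*}(t)\in[s^{*}(t)-1,s^{*}(t)]$; hence $s^{*}(t)\in\cT$ whenever $s^{*}(t)\ge T_{0}+4$. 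By Remark~\ref{R-crucial}, $M(t)\to-\infty$, and since $m$ is bounded on compacts, $m(s^{*}(t))=M(t)\to-\infty$ forces $s^{*}(t)\to\infty$. Thus there is $T_{1}\ge T_{0}+5$ with $s^{*}(t)\ge T_{0}+4$ for all $t\ge T_{1}$. For such $t$, Lemma~\ref{L-Harn1} applied at $T=s^{*}(t)\in\cT$ yields
\begin{equation*}
\max_{[s^{*}(t)-3,\,s^{*}(t)-2]\times\overline{\Bo}}\VI \le M(t)+M_{0}\,.
\end{equation*}

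The next step is a one-shot DP inequality independent of the time gap. For any $\tau\in[0,s]$ and $x\in\RR^{d}$, using \eqref{E-SRa},
\begin{equation*}
\VI(s,x)\le \Exp_{x}^{v^{*}}\!\left[\int_{0}^{\tau}\Or(X_{u},v^{*}(X_{u}))\,\D u+\VI(s-\tau,X_{\tau})\right]\!.
\end{equation*}
The HJB equation \eqref{E-HJB} and \eqref{E-v*} give $\Lg^{v^{*}}V^{*}=-\Or(\cdot,v^{*}(\cdot))$, and Dynkin's formula (justified by Lemma~\ref{L3.4}) yields $\Exp_{x}^{v^{*}}[\int_{0}^{\tau}\Or(X_{u},v^{*}(X_{u}))\,\D u]=V^{*}(x)-\Exp_{x}^{v^{*}}[V^{*}(X_{\tau})]$. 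Since $V^{*}\ge0$, Lemma~\ref{L-outer} extends to all $y\in\RR^{d}$ as
\begin{equation*}
\VI(s-\tau,y)\le \max_{\overline{\Bo}}\VI(s-\tau,\cdot)+2\norm{\RVI_{0}}_{\infty}+(1+\varrho\delta_{0}^{-1})V^{*}(y)\,.
\end{equation*}
Taking expectation under $v^{*}$ and invoking the uniform moment bound $\Exp_{x}^{v^{*}}[V^{*}(X_{\tau})]\le m_{r}(V^{*}(x)+1)$ from Lemma~\ref{L3.4} gives, for every $x\in\Bo$,
\begin{equation*}
\max_{\Bo}\VI(s,\cdot)\le \max_{\overline{\Bo}}\VI(s-\tau,\cdot)+C_{5}\,,
\end{equation*}
for a constant $C_{5}$ depending only on $\max_{\overline{\Bo}}V^{*}$, $\norm{\RVI_{0}}_{\infty}$, $\varrho$, $\delta_{0}$, $m_{r}$, and critically \emph{not on $\tau$}.

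Applying this estimate first with $(s,\tau)=(s^{*}(t),2)$ and combining with the Harnack bound yields $\max_{\Bo}\VI(s^{*}(t),\cdot)\le M(t)+M_{0}+C_{5}$; applying it again with $(s,\tau)=(t,\,t-s^{*}(t))$ yields $\max_{\Bo}\VI(t,\cdot)\le M(t)+M_{0}+2C_{5}$. Since $m(t)\ge M(t)$, this gives $\osc_{\Bo}\VI(t,\cdot)\le M_{0}+2C_{5}$ for all $t\ge T_{1}$. For $t\in[0,T_{1}]$ the oscillation is bounded by continuity of $\VI$ on $[0,T_{1}]\times\overline{\Bo}$. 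Taking $C_{0}$ to be the larger of these two bounds proves the lemma.

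The main obstacle is the $\tau$-independence of $C_{5}$: because $t-s^{*}(t)$ may be arbitrarily large, the second DP step would blow up unless $\Exp_{x}^{v^{*}}[V^{*}(X_{\tau})]$ is bounded uniformly in $\tau$. This is precisely the content of Lemma~\ref{L3.4} and is where Assumption~\ref{A3.2} is essential.
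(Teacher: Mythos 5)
Your proof is correct and follows essentially the same route as the paper's: control $\max_{\Bo}\VI$ at a time in $\cT$ via Lemma~\ref{L-Harn1}, extend to $\RR^{d}$ via Lemma~\ref{L-outer}, and propagate forward under $v^{*}$ using the gap-independent moment bound \eqref{E-deg2bound} from Lemma~\ref{L3.4}. The only differences are cosmetic — you anchor at the first time the running minimum is attained and apply the DP step twice, whereas the paper anchors at $\sup\,\cT\cap[t_{0},t]$ and propagates once from $\tau-2$ to $t$.
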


\begin{proof}
Suppose $t\in\cT$.
Then, by Lemma~\ref{L-Harn1},
\begin{equation*}
\max_{\partial \Bo}\;{\VI}(t-2,\cdot\,) - \min_{[0,t]\times\Bo}\;{\VI} \le M_{0}\,.
\end{equation*}
Therefore, by Lemma~\ref{L-outer} we have
\begin{equation}\label{E-inner-1}
{\VI}(t-2,x) - \min_{[0,t]\times\Bo}\;{\VI}
\le M_{0} + 2\,\norm{{\RVI}_{0}}_{\infty} 
+ \bigl(1 + \varrho\,\delta_{0}^{-1}\bigr) V^{*}(x)
\qquad \forall (t,x)\in\cT\times\Bo^{c}\,.
\end{equation}
Next, fix any $t_{0}\in\cT$.
It suffices to prove the result for $t\ge t_{0}$
since it trivially holds for $t$ in the compact interval $[0,t_{0}]$.
Given $t\ge t_{0}$ let $\tau\df \sup\; \cT\cap [t_{0},t]$.
Note then that
\begin{equation}\label{E-inner-2}
\min_{[0,\tau]\times\Bo}\;{\VI}=\min_{[0,t]\times\Bo}\;{\VI}\,.
\end{equation}
By \eqref{E-inner-1}--\eqref{E-inner-2}, and since $V^{*}$ is nonnegative, we obtain
\begin{align}\label{E-inner-3}
\sup_{x\in \Bo}\;{\VI}(t,x)
&\le \sup_{x\in \Bo}\; \Exp_{x}^{v^{*}}
\left[\int_{0}^{t-\tau+2} \Or\bigl(X_{s},v^{*}(X_{s})\bigr)\,\D{s}
+ {\VI}(\tau-2,X_{t-\tau+2})\right]
\\[5pt]
&\le \sup_{x\in \Bo}\; \Exp_{x}^{v^{*}}
\left[\int_{0}^{t-\tau+2} \Or\bigl(X_{s},v^{*}(X_{s})\bigr)\,\D{s}
+ V^{*}(X_{t-\tau+2})\right]
\nonumber\\[5pt]
&\mspace{150mu}
+ \sup_{x\in \Bo}\; \Exp_{x}^{v^{*}}\bigl[{\VI}(\tau-2,X_{t-\tau+2})\bigr]
\nonumber\\[5pt]\nonumber
&\le \norm{V^{*}}_{\infty,\Bo} + \min_{[0,t]\times\Bo}\;{\VI} + M_{0}
+ 2\,\norm{{\RVI}_{0}}_{\infty} + \varrho\,\delta_{0}^{-1}K_{0}\,,
\end{align}
with
\begin{equation*}
K_{0}\df\sup_{t\ge 0}\; \sup_{x\in \Bo}\;\Exp_{x}^{v^{*}}
\bigl[V^{*}(X_{t})\bigr]\,.
\end{equation*}
By Lemma~\ref{L3.4}, $K_{0}$ is finite.
Since
\begin{equation*}
\osc_{\Bo}\;{\VI}(t,\cdot\,)\le\sup_{x\in \Bo}\;{\VI}(t,x)
-\min_{[0,t]\times\Bo}\;{\VI}\,,
\end{equation*}
and $t\ge t_{0}$ was arbitrary,
the result follows for all $t\ge t_{0}$ by \eqref{E-inner-3}.
\qquad
\end{proof}

The following corollary
now follows by Theorem~\ref{T-Harnack} and Lemma~\ref{L-inner}.

\begin{corollary}\label{C-Harnack}
Under the hypotheses of Lemma~\ref{L-inner},
for any $\tau>0$ there exists a constant $\widehat{C}(\tau)$
such that
\begin{equation*}
\osc_{[n\tau,(n+1)\tau]\times\Bo}\;{\VI}\le \widehat{C}(\tau)\qquad\forall n\in\NN\,.
\end{equation*}
\end{corollary}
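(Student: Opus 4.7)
The plan is to combine the sharp form of the estimate actually proved within Lemma~\ref{L-inner} with the one-sided temporal control from Lemma~\ref{L-nice}. The key observation is that the argument leading to \eqref{E-inner-3} yields not merely the spatial-oscillation statement of Lemma~\ref{L-inner}, but the stronger asymmetric bound
\[
\sup_{x\in\Bo}\VI(t,x) \,\le\, \min_{[0,t]\times\Bo}\VI + C_0 \qquad \forall\,t\ge t_0,
\]
with $C_0 = \norm{V^{*}}_{\infty,\Bo}+M_0+2\norm{\RVI_0}_{\infty}+\varrho\delta_0^{-1}K_0$. Theorem~\ref{T-Harnack} is embedded in this estimate through the constant $M_0$ by way of Lemma~\ref{L-Harn1}.

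Granted this, I would fix $\tau>0$ and $n\in\NN$ with $n\tau\ge t_0$, and argue as follows. Since $t\mapsto\min_{[0,t]\times\Bo}\VI$ is non-increasing in $t$, the display above gives
\[
\max_{[n\tau,(n+1)\tau]\times\Bo}\VI \,\le\, \min_{[0,n\tau]\times\Bo}\VI + C_0.
\]
For a matching lower bound on the slab minimum, I would apply Lemma~\ref{L-nice} with $s=n\tau$: for every $x\in\RR^d$ and $t\in[n\tau,(n+1)\tau]$,
\[
\VI(t,x) \,\ge\, \VI(n\tau,x) - \varrho\tau - \osc_{\RR^{d}}\RVI_0,
\]
so, taking infima over $(t,x)\in[n\tau,(n+1)\tau]\times\Bo$ and using $\min_{x\in\Bo}\VI(n\tau,x)\ge\min_{[0,n\tau]\times\Bo}\VI$,
\[
\min_{[n\tau,(n+1)\tau]\times\Bo}\VI \,\ge\, \min_{[0,n\tau]\times\Bo}\VI - \varrho\tau - \osc_{\RR^{d}}\RVI_0.
\]
Subtracting the two bounds, the (possibly unbounded) historical minimum cancels exactly and yields
\[
\osc_{[n\tau,(n+1)\tau]\times\Bo}\VI \,\le\, C_0 + \varrho\tau + \osc_{\RR^{d}}\RVI_0 \,\df\, \widehat C(\tau).
\]
The finitely many $n$ with $n\tau<t_0$ are handled by continuity of $\VI$ on a compact set and absorbed into $\widehat C(\tau)$.

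The main subtlety to flag is that the stated form of Lemma~\ref{L-inner}---namely the pointwise-in-$t$ bound $\osc_\Bo\VI(t,\cdot)\le C_0$---is not by itself sufficient, because Lemma~\ref{L-nice} only controls $\VI(s,x)-\VI(t,x)$ for $s\le t$ and provides no a priori upper bound on forward-in-time growth of $\VI(\cdot,x)$. The asymmetric comparison between a current spatial maximum and the historical space-time minimum, which Lemma~\ref{L-inner} actually produces en route to its conclusion, is exactly what sidesteps the need for a separate Harnack-type argument at this stage; extracting it from the proof (rather than only the statement) of Lemma~\ref{L-inner} is the essential step.
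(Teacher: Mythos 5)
Your proof is correct, and the central observation you flag is a real one: the \emph{statement} of Lemma~\ref{L-inner} (a pointwise-in-$t$ spatial oscillation bound) together with Lemma~\ref{L-nice} (which only controls $\VI$ backward in time) does not by itself yield a space--time oscillation bound, because nothing in those two statements rules out forward-in-time growth of $t\mapsto\VI(t,x)$ within a slab. The estimate that closes this gap is exactly the asymmetric bound $\sup_{x\in\Bo}\VI(t,x)\le\min_{[0,t]\times\Bo}\VI+C_{0}$ for $t\ge t_{0}$, which is what display \eqref{E-inner-3} in the proof of Lemma~\ref{L-inner} actually establishes; your extraction of it is legitimate, and the cancellation of the (possibly divergent) historical minimum between your upper and lower slab bounds is clean. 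The paper gives no written proof of the corollary and instead asserts that it follows from Theorem~\ref{T-Harnack} and Lemma~\ref{L-inner}; the intended route is presumably a renewed application of the parabolic Harnack inequality on unit time slabs (as in Lemma~\ref{L-Harn1}, applied to $\psi_{T}-\min_{[0,T]\times\Bo}\VI$), which also ultimately needs the asymmetric form of \eqref{E-inner-3} to control $\min_{[T-1,T]\times\Bo}\VI-\min_{[0,T]\times\Bo}\VI$. Your argument buys a more elementary derivation at this stage — no fresh Harnack application, only Lemma~\ref{L-nice} for the temporal direction — at the cost of relying on an intermediate estimate from the proof rather than the statement of Lemma~\ref{L-inner}; if this were written up, that estimate should be recorded explicitly as part of Lemma~\ref{L-inner}'s conclusion. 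The handling of the finitely many slabs with $n\tau<t_{0}$ by continuity on a compact set is fine.
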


\section{Convergence of the Relative Value Iteration}\label{S-main}

We define the set $\cT_{0}\subset\RR_{+}$ by
\begin{equation*}
\cT_{0} \df \bigl\{ t\in\RR_{+} : {\VI}(t,0) \le {\VI}(t',0)
\quad\forall t'\le t\bigr\}\,.
\end{equation*}

In the next lemma we use the variable
\begin{equation*}
\varPsi(t,x) \df {\VI}(t,x)-{\VI}(t,0)\,.
\end{equation*}

\begin{lemma}\label{L5.1}
Let Assumption~\ref{A3.2} hold and also
suppose that the initial condition ${\RVI}_{0}\in\Cc^{2}(\RR^{d})$
is nonnegative and bounded.
Then
\begin{subequations}
\begin{gather}
\varPsi(t,x) \le  C_{0} + 2\,\norm{{\RVI}_{0}}_{\infty} 
+ \bigl(1 + \varrho\,\delta_{0}^{-1}\bigr) V^{*}(x) \qquad
\forall (t,x)\in\RR_{+}\times\RR^{d}\,,
\label{E5.1a}
\intertext{and there exists a constant $\widehat{M}_{0}$ such that}
{\VI}(t,0) - {\VI}(t',0)\le \widehat{M}_{0}\qquad \forall t\ge t'\,.
\label{E5.1b}
\end{gather}
\end{subequations}
\end{lemma}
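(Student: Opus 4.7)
For part \eqref{E5.1a}, I would combine Lemma~\ref{L-outer} and Lemma~\ref{L-inner}. When $x\in \Bo$, the bound $\varPsi(t,x)\le\osc_{\Bo}\,{\VI}(t,\cdot)\le C_{0}$ from Lemma~\ref{L-inner} suffices, since $V^{*}\ge 0$ makes the stated right-hand side trivially larger. When $x\in \Bo^{c}$, decompose
\begin{equation*}
\varPsi(t,x) = \bigl[{\VI}(t,x) - \max_{\partial \Bo}\,{\VI}(t,\cdot)\bigr] + \bigl[\max_{\partial \Bo}\,{\VI}(t,\cdot) - {\VI}(t,0)\bigr]\,,
\end{equation*}
bounding the first bracket by $2\norm{{\RVI}_{0}}_{\infty}+(1+\varrho\,\delta_{0}^{-1})V^{*}(x)$ via Lemma~\ref{L-outer}, and the second by $\osc_{\Bo}\,{\VI}(t,\cdot)\le C_{0}$ via Lemma~\ref{L-inner}.

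For part \eqref{E5.1b}, the plan is to apply the dynamic programming principle to the canonical stochastic representation \eqref{E-SRa}, splitting the horizon at time $t-t'$, to obtain
\begin{equation*}
{\VI}(t,0) = \inf_{U\in\Uadm}\;\Exp^{U}_{0}\left[\int_{0}^{t-t'}\Or(X_{s},U_{s})\,\D{s} + {\VI}(t',X_{t-t'})\right]\,,
\end{equation*}
and then specialize to the stable optimal control $v^{*}$. Next, I would exploit the HJB equation \eqref{E-HJB} at $v^{*}$, which in Poisson form reads $\Lg^{v^{*}}V^{*}=\varrho-r(\cdot,v^{*})$, and apply Dynkin's formula with standard localization by $\uptau_{R}$ and $R\to\infty$ to get
\begin{equation*}
\Exp^{v^{*}}_{0}\left[\int_{0}^{t-t'}\Or\bigl(X_{s},v^{*}(X_{s})\bigr)\,\D{s}\right] = V^{*}(0) - \Exp^{v^{*}}_{0}[V^{*}(X_{t-t'})]\,.
\end{equation*}
I would then apply part \eqref{E5.1a} pointwise to estimate ${\VI}(t',X_{t-t'})\le {\VI}(t',0)+C_{0}+2\norm{{\RVI}_{0}}_{\infty}+(1+\varrho\,\delta_{0}^{-1})V^{*}(X_{t-t'})$, take $\Exp^{v^{*}}_{0}$-expectation, and add the two estimates. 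The $-\Exp^{v^{*}}_{0}[V^{*}(X_{t-t'})]$ contribution partially cancels the $+(1+\varrho\,\delta_{0}^{-1})\Exp^{v^{*}}_{0}[V^{*}(X_{t-t'})]$ contribution, leaving a residual $\varrho\,\delta_{0}^{-1}\Exp^{v^{*}}_{0}[V^{*}(X_{t-t'})]$ that is uniformly bounded in $t-t'$ by \eqref{E-deg2bound} of Lemma~\ref{L3.4}.

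The main technical point is justifying the two standard tools invoked above. The dynamic programming identity follows from the Markov property together with the stochastic representation established in Theorem~\ref{T-canonical}, since ${\VI}$ is by convention the canonical solution and ${\VI}(t',\cdot)$ is continuous and bounded below. The Dynkin--Poisson identity for $v^{*}$ is obtained by applying It\^o's formula to $V^{*}$ on $[0,\uptau_{R}\wedge(t-t')]$ and passing to the limit $R\to\infty$, with the required integrability of $V^{*}(X_{\cdot})$ under $\Prob^{v^{*}}_{0}$ supplied by \eqref{E-Dynkinbound} and \eqref{E-deg2bound}. Assembling the constants then yields $\widehat{M}_{0}=V^{*}(0)+C_{0}+2\norm{{\RVI}_{0}}_{\infty}+\varrho\,\delta_{0}^{-1}m_{r}(V^{*}(0)+1)$, which is independent of $t\ge t'\ge 0$.
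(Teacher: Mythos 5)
Your proof is correct and follows essentially the same route as the paper: part \eqref{E5.1a} is obtained exactly as in the paper by combining Lemma~\ref{L-outer} with Lemma~\ref{L-inner}, and part \eqref{E5.1b} uses the same chain of ingredients (dynamic programming along $v^{*}$, the Poisson identity $\Exp^{v^{*}}_{0}\bigl[\int_{0}^{T}\Or\,\D{s}\bigr]=V^{*}(0)-\Exp^{v^{*}}_{0}[V^{*}(X_{T})]$, part \eqref{E5.1a}, and the uniform moment bound \eqref{E-deg2bound} of Lemma~\ref{L3.4}). The only cosmetic difference is that the paper first reduces to comparing ${\VI}(t,0)$ against its running minimum over $[0,t]$ before applying this argument, whereas you apply it directly to an arbitrary pair $t\ge t'$; both yield the same constant up to labeling.
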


\begin{proof}
The estimate in \eqref{E5.1a} follows by Lemmas~\ref{L-outer} and \ref{L-inner}.
To show \eqref{E5.1b} note that
\begin{equation}\label{EL5.1a}
{\VI}(t,0) - {\VI}(t',0) \le {\VI}(t,0) - \min_{s\in[0,t]}\;{\VI}(s,0)\\
\qquad \forall t\in[0,t]\,.
\end{equation}
Let $t^{*}\in\Argmin_{s\in[0,t]}\;{\VI}(s,0)$ and define $T\df t-t^{*}$.
Clearly, $t^{*}= t-T\in\cT_{0}$.
We have
\begin{align}\label{E5.2}
{\VI}(t,0) - {\VI}(t-T,0)&\le \Exp_{0}^{v^{*}}
\left[\int_{0}^{T} \Or\bigl(X_{s},v^{*}(X_{s})\bigr)\,\D{s}
+ {\VI}(t-T,X_{T})\right]
\\[5pt]
&\mspace{300mu}-{\VI}(t-T,0) 
\nonumber\\[5pt]
&= \Exp_{0}^{v^{*}}
\left[\int_{0}^{T} \Or\bigl(X_{s},v^{*}(X_{s})\bigr)\,\D{s}
+ \varPsi(t-T,X_{T})\right]
\nonumber\\[5pt]
&= V^{*}(0) - \Exp_{0}^{v^{*}}[V^{*}(X_{T})]
+\Exp_{0}^{v^{*}}\bigl[\varPsi(t-T,X_{T})\bigr]
\nonumber\\[5pt]\nonumber
&\le V^{*}(0) + C_{0} + 2\,\norm{{\RVI}_{0}}_{\infty}
+ \varrho\,\delta_{0}^{-1}\,\Exp_{0}^{v^{*}}\bigl[V^{*}(X_{T})\bigr]\,,
\end{align}
where the last inequality follows by \eqref{E5.1a}.
However, by Lemma~\ref{L3.4} there exists a constant $\widetilde{M}_{0}$ such
that
\begin{equation*}
\sup_{T\ge0}\; \Exp_{0}^{v^{*}}[V^{*}(X_{T})] \le \widetilde{M}_{0}\,.
\end{equation*}
It then follows by \eqref{E5.2} that
${\VI}(t,0) - {\VI}(t-T,0)$ is bounded above by a constant independent
of $t$ and $T$.
The result then follows by \eqref{EL5.1a}.
\qquad
\end{proof}

\smallskip
\begin{lemma}\label{L5.2}
Under the hypotheses of Lemma~\ref{L5.1} there exists a constant $k_{0}>0$ such that
\begin{equation*}
\Exp_{x}^{\Hv^{t}}[\tc\wedge{t}]
\le k_{0} + 2\,\delta_{0}^{-1}\bigl(1 + \varrho\,\delta_{0}^{-1}\bigr) V^{*}(x)
\qquad \forall x\in \Bo^{c}\,.
\end{equation*}
\end{lemma}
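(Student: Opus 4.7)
The plan is to start from the stochastic lower bound \eqref{E-hit-2} for $x\in\Bo^{c}$. Since by definition $\Or(y,u)\ge\delta_{0}$ for $y\in\Bo^{c}$, and the process $X_{s}$ stays in $\Bo^{c}$ on the interval $[0,\tc\wedge t)$, the running-cost integral in \eqref{E-hit-2} dominates $\delta_{0}\,\Exp^{\Hv^{t}}_{x}[\tc\wedge t]$. Rearranging gives
\begin{equation*}
\delta_{0}\,\Exp_{x}^{\Hv^{t}}[\tc\wedge t]
\le {\VI}(t,x)-\Exp_{x}^{\Hv^{t}}\bigl[{\VI}(t-\tc\wedge t,X_{\tc\wedge t})\bigr]\,.
\end{equation*}
The remaining task is to bound the right-hand side by a constant plus $(1+\varrho\delta_{0}^{-1})V^{*}(x)$.

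I would split this according to the events $\{\tc\le t\}$ and $\{\tc>t\}$. On $\{\tc\le t\}$ the stopped state $X_{\tc}$ lies in $\overline{B}_{0}$, so Lemma~\ref{L-inner} gives $\abs{\varPsi(t-\tc,X_{\tc})}\le C_{0}$, i.e.\ ${\VI}(t-\tc,X_{\tc})\ge{\VI}(t-\tc,0)-C_{0}$, while \eqref{E5.1b} yields ${\VI}(t-\tc,0)\ge{\VI}(t,0)-\widehat{M}_{0}$. Combining,
\begin{equation*}
{\VI}(t,x)-{\VI}(t-\tc,X_{\tc})\le \varPsi(t,x)+C_{0}+\widehat{M}_{0}\,,
\end{equation*}
and the estimate \eqref{E5.1a} bounds $\varPsi(t,x)$ by $C_{0}+2\norm{{\RVI}_{0}}_{\infty}+(1+\varrho\delta_{0}^{-1})V^{*}(x)$. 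On $\{\tc>t\}$ we have $X_{\tc\wedge t}=X_{t}$ and $t-\tc\wedge t=0$, so the value is ${\RVI}_{0}(X_{t})\ge0$ and the difference is at most ${\VI}(t,x)=\varPsi(t,x)+{\VI}(t,0)$; bounding ${\VI}(t,0)\le\widehat{M}_{0}+{\RVI}_{0}(0)\le\widehat{M}_{0}+\norm{{\RVI}_{0}}_{\infty}$ via \eqref{E5.1b} applied at $t'=0$, we again obtain a bound of the form $\text{const}+(1+\varrho\delta_{0}^{-1})V^{*}(x)$.

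Taking expectations over both events, the right-hand side of the basic inequality is controlled by $k'_{0}+(1+\varrho\delta_{0}^{-1})V^{*}(x)$ for a constant $k'_{0}$ depending on $C_{0}$, $\widehat{M}_{0}$, and $\norm{{\RVI}_{0}}_{\infty}$; dividing by $\delta_{0}$ produces the claimed estimate (with a factor $\delta_{0}^{-1}$ which is absorbed into the stated $2\delta_{0}^{-1}$ with room to spare).

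The main obstacle is that neither ${\VI}(t,x)$ nor ${\VI}(t-\tc\wedge t,X_{\tc\wedge t})$ is individually controlled in a uniform way in $t$, and the Markov control $\Hv^{t}$ depends on the horizon $t$, which prevents direct use of ergodic-type estimates for a fixed stationary control. The resolution is that we only need to bound their difference, and the three key ingredients for doing so---the oscillation bound on $\Bo$ (Lemma~\ref{L-inner}), the exterior growth bound (Lemma~\ref{L-outer}, embedded in \eqref{E5.1a}), and the one-sided time-monotonicity of ${\VI}(\cdot,0)$ (\eqref{E5.1b})---together provide exactly the right decoupling under an arbitrary admissible control, since each of these bounds holds pointwise in $(t,x)$ and not just in expectation.
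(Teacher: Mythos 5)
Your argument is correct and follows essentially the same route as the paper: both start from \eqref{E-hit-2}, bound the running-cost integral below by $\delta_{0}\,\Exp_{x}^{\Hv^{t}}[\tc\wedge t]$, and control the difference $\VI(t,x)-\VI(t-\tc\wedge t, X_{\tc\wedge t})$ using the oscillation bound of Lemma~\ref{L-inner}, the growth bound \eqref{E5.1a}, and the quasi-monotonicity \eqref{E5.1b}. The one genuine difference is the treatment of the event $\{\tc>t\}$: the paper retains the term $-\VI(t,0)\,\Prob_{x}^{\Hv^{t}}(\tc>t)$, bounds $\Prob_{x}^{\Hv^{t}}(\tc>t)\le t^{-1}\Exp_{x}^{\Hv^{t}}[\tc\wedge t]$, and invokes Lemma~\ref{L-asympt} (i.e., $\VI(t,0)/t\to0$) to absorb it into the left-hand side for $t$ large --- which is exactly where the factor $2$ in the statement originates. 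You instead use the uniform bound $\VI(t,0)\le\widehat{M}_{0}+{\RVI}_{0}(0)$, which is \eqref{E5.1b} applied with $t'=0$; this is legitimate, dispenses with Lemma~\ref{L-asympt} entirely, and in fact yields the estimate with $\delta_{0}^{-1}$ in place of $2\delta_{0}^{-1}$, so the stated bound follows a fortiori. The only step worth making explicit is that the rearrangement of \eqref{E-hit-2} into $\delta_{0}\,\Exp_{x}^{\Hv^{t}}[\tc\wedge t]\le\VI(t,x)-\Exp_{x}^{\Hv^{t}}\bigl[\VI(t-\tc\wedge t,X_{\tc\wedge t})\bigr]$ is licensed: both the integral term and the terminal term are bounded below (by $0$ and $-\varrho t$, respectively) while their sum has finite expectation dominated by $\VI(t,x)$, so each expectation is finite separately.
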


\begin{proof}
Subtracting ${\VI}(t,0)$ from both sides of \eqref{E-hit-2}, we obtain
\begin{align*}
\varPsi(t,x)
&\ge\Exp_{x}^{\Hv^{t}}
\biggl[\int_{0}^{\tc\wedge{t}}\Or\bigl(X_{s},\Hv^{t}_{s}(X_{s})\bigr)\,\D{s}
+\varPsi(t-\tc\wedge{t},X_{\tc\wedge{t}})\,\Ind\{\tc\le t\}
+ {\RVI}_{0}(X_{t})\;\Ind\{\tc > t\}
\nonumber\\[5pt]
&\mspace{120mu}- {\VI}(t,0)\,\Ind\{\tc>t\}
+\bigl({\VI}(t-\tc\wedge{t},0)-{\VI}(t,0)\bigr)\,\Ind\{\tc\le t\}\biggr]\,.
\end{align*}
We discard the nonnegative term ${\RVI}_{0}(X_{t})\;\Ind\{\tc > t\}$,
and we use Lemma~\ref{L-inner} and \eqref{E5.1b}
to write the above inequality as
\begin{align}\label{E5.3}
\varPsi(t,x) &\ge \Exp_{x}^{\Hv^{t}}
\biggl[\int_{0}^{\tc\wedge{t}}
\Or\bigl(X_{s},\Hv^{t}_{s}(X_{s})\bigr)\,\D{s}\biggr]
- \sup_{0\le s\le t}\;\norm{\varPsi(s,\cdot\,)}_{\infty,\Bo}
\\[5pt]\nonumber
&\mspace{30mu}
-\Exp_{x}^{\Hv^{t}}\bigl[{\VI}(t,0)\,\Ind\{\tc>t\}\bigr]
+\Exp_{x}^{\Hv^{t}}
\bigl[\bigl({\VI}(t-\tc\wedge{t},0)-{\VI}(t,0)\bigr)\,\Ind\{\tc\le t\}\bigr]
\\[5pt]\nonumber
&\ge
\Exp_{x}^{\Hv^{t}}
\biggl[\int_{0}^{\tc\wedge{t}}
\Or\bigl(X_{s},\Hv^{t}_{s}(X_{s})\bigr)\,\D{s}\biggr] - C_{0}
-{\VI}(t,0)\Prob_{x}^{\Hv^{t}}\bigl(\{\tc>t\}\bigr) - \widehat{M}_{0}\,.
\end{align}
By \eqref{E5.1a} and \eqref{E5.3} we obtain
\begin{align*}
C_{0} + 2\,\norm{{\RVI}_{0}}_{\infty}
+ \bigl(1 + \varrho\,\delta_{0}^{-1}\bigr) V^{*}(x) &\ge
\delta_{0} \Exp_{x}^{\Hv^{t}}[\tc\wedge{t}] 
-{\VI}(t,0)\Prob_{x}^{\Hv^{t}}\bigl(\{\tc>t\}\bigr) 
\\[5pt]\nonumber
&\mspace{150mu}- C_{0}- \widehat{M}_{0}
\\[5pt]\nonumber
&\ge
\left(\delta_{0} - \tfrac{{\VI}(t,0)}{t}\right)
\Exp_{x}^{\Hv^{t}}[\tc\wedge{t}]
 - C_{0}- \widehat{M}_{0}\,.
\end{align*}
The result then follows by Lemma~\ref{L-asympt}.
\qquad
\end{proof}

\smallskip
\begin{lemma}\label{L5.3}
Under the hypotheses of Lemma~\ref{L5.1},
\begin{equation*}
{\VI}(t,0)\Prob_{x}^{\Hv^{t}}\bigl(\{\tc>t\}\bigr)
\xrightarrow[t\to\infty]{} 0\,,
\end{equation*}
uniformly on $x$ in compact sets of $\RR^{d}$\,.
\end{lemma}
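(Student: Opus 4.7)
The plan is to combine three ingredients already at hand: the sublinear growth of $t\mapsto\VI(t,0)$ from Lemma~\ref{L-asympt}, the exit time bound of Lemma~\ref{L5.2}, and Markov's inequality applied to the truncated exit time $\tc\wedge t$.

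First I would dispose of points inside $\Bo$. By path continuity of the diffusion, if $x\in \Bo$ then $X_{t}\in \Bo$ for all sufficiently small $t>0$, so $\tc=\uptau(\Bo^{c})=0$ and therefore $\Prob_{x}^{\Hv^{t}}(\tc>t)=0$. The same holds for $x\in\partial \Bo$ by local nondegeneracy (Assumption (A3)). So the product $\VI(t,0)\Prob_{x}^{\Hv^{t}}(\tc>t)$ vanishes identically on $\overline{\Bo}$, and we only need to worry about $x\in \Bo^{c}$.

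Next, for $x\in \Bo^{c}$, I would apply Markov's inequality to the nonnegative random variable $\tc\wedge t$:
\begin{equation*}
t\,\Prob_{x}^{\Hv^{t}}(\tc>t)\le \Exp_{x}^{\Hv^{t}}[\tc\wedge t]\le k_{0}+2\,\delta_{0}^{-1}\bigl(1+\varrho\,\delta_{0}^{-1}\bigr)V^{*}(x),
\end{equation*}
where the second inequality is Lemma~\ref{L5.2}. Given a compact set $K\subset\RR^{d}$, the continuity of $V^{*}$ gives $M_{K}\df\sup_{x\in K}V^{*}(x)<\infty$, so combining with the previous paragraph
\begin{equation*}
\sup_{x\in K}\;t\,\Prob_{x}^{\Hv^{t}}(\tc>t)\le k_{0}+2\,\delta_{0}^{-1}\bigl(1+\varrho\,\delta_{0}^{-1}\bigr)M_{K} \qquad\forall t>0.
\end{equation*}

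Finally I would apply Lemma~\ref{L-asympt} at $x=0$: since $\RVI_{0}$ is nonnegative and bounded we have $\RVI_{0}\in\order_{V^{*}}$, hence $t^{-1}\VI(t,0)\to 0$, and therefore also $t^{-1}\abs{\VI(t,0)}\to 0$. Writing
\begin{equation*}
\babs{\VI(t,0)\,\Prob_{x}^{\Hv^{t}}(\tc>t)}=\frac{\abs{\VI(t,0)}}{t}\cdot t\,\Prob_{x}^{\Hv^{t}}(\tc>t)
\end{equation*}
and taking the supremum over $x\in K$ on the right using the uniform bound above, the claim follows. There is no real obstacle here: the lemma is essentially a repackaging of the bounds already proved, and the key conceptual point was already absorbed into Lemma~\ref{L5.2}, where the exit-time estimate was extracted from the canonical stochastic representation together with the $\norm{\VI(t,\cdot)}_{\infty,\Bo}$ oscillation control.
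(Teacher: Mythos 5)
Your argument is exactly the paper's: combine Markov's inequality for $\tc\wedge t$ with the bound of Lemma~\ref{L5.2} to get $t\,\Prob_{x}^{\Hv^{t}}(\tc>t)\le k_{0}+2\delta_{0}^{-1}(1+\varrho\delta_{0}^{-1})V^{*}(x)$, then invoke Lemma~\ref{L-asympt} for $t^{-1}\VI(t,0)\to0$; the uniformity on compacts follows from the continuity of $V^{*}$. Your extra care with $x\in\overline{B}_{0}$ (where $\tc=0$ a.s.) and with absolute values is a harmless refinement of the paper's one-line proof, which only displays the estimate for $x\in\Bo^{c}$.
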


{\em Proof}.
By Lemma~\ref{L-asympt} and Lemma~\ref{L5.2} we have
\begin{equation*}
{\VI}(t,0)\Prob_{x}^{\Hv^{t}}\bigl(\{\tc>t\}\bigr)
\le \frac{{\VI}(t,0)}{t}\bigl(k_{0} + 2\,\delta_{0}^{-1}
\bigl(1 + \varrho\,\delta_{0}^{-1}\bigr) V^{*}(x)\bigr)
\xrightarrow[t\to\infty]{} 0
\qquad \forall x\in \Bo^{c}\,.\qquad\endproof
\end{equation*}

\smallskip
\begin{lemma}\label{L-Vbound}
Let Assumption~\ref{A3.2} hold and also
suppose the initial condition ${\RVI}_{0}\in\Cc^{2}(\RR^{d})$
is nonnegative and bounded.
Then the map $t\mapsto {\RVI}(t,0)$ is bounded on $[0,\infty)$,
and it holds that
\begin{equation*}
- \osc_{\RR^{d}}\;{\RVI}_{0}\le \liminf_{t\to\infty}\; {\RVI}(t,0)
\le \limsup_{t\to\infty}\; {\RVI}(t,0)
\le \widehat{M}_{0}+\varrho\,.
\end{equation*}
\end{lemma}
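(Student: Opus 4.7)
The plan is to work directly from the representation \eqref{E-virvi} evaluated at $x=0$, namely
\begin{equation*}
{\RVI}(t,0) \;=\; \varrho\,(1-\E^{-t}) \;+\; {\VI}(t,0) \;-\; \int_{0}^{t}\E^{s-t}\,{\VI}(s,0)\,\D{s}\,,
\end{equation*}
and to pinch the integral between two one-sided estimates: a lower bound using the monotonicity control \eqref{E5.1b} (which gives the upper bound on $\RVI(t,0)$), and an upper bound using Lemma~\ref{L-nice} (which gives the lower bound on $\RVI(t,0)$). Throughout we also use $\VI(t,0)\ge -\varrho t$ from Theorem~\ref{T-canonical} and the sublinear growth $t^{-1}\VI(t,0)\to0$ from Lemma~\ref{L-asympt}, which together imply $\E^{-t}\VI(t,0)\to0$ as $t\to\infty$.

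For the upper bound, \eqref{E5.1b} gives ${\VI}(s,0)\ge{\VI}(t,0)-\widehat{M}_{0}$ for every $s\le t$, so
\begin{equation*}
\int_{0}^{t}\E^{s-t}\,{\VI}(s,0)\,\D{s}\;\ge\;\bigl({\VI}(t,0)-\widehat{M}_{0}\bigr)(1-\E^{-t})\,,
\end{equation*}
and substitution into the displayed identity yields
\begin{equation*}
{\RVI}(t,0)\;\le\;\varrho\,(1-\E^{-t})\;+\;\E^{-t}\,{\VI}(t,0)\;+\;\widehat{M}_{0}(1-\E^{-t})\,.
\end{equation*}
Letting $t\to\infty$ and using $\E^{-t}\VI(t,0)\to0$ gives $\limsup_{t\to\infty}{\RVI}(t,0)\le\varrho+\widehat{M}_{0}$.

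For the lower bound, Lemma~\ref{L-nice} at $x=0$ gives ${\VI}(s,0)\le{\VI}(t,0)+\varrho(t-s)+\osc_{\RR^{d}}{\RVI}_{0}$ for $0\le s\le t$. After the change of variable $u=t-s$, one computes $\int_{0}^{t}\E^{-u}u\,\D u = 1-\E^{-t}(t+1)$, so
\begin{equation*}
\int_{0}^{t}\E^{s-t}\,{\VI}(s,0)\,\D{s}\;\le\;\bigl({\VI}(t,0)+\osc_{\RR^{d}}{\RVI}_{0}\bigr)(1-\E^{-t})\;+\;\varrho\bigl(1-\E^{-t}(t+1)\bigr)\,.
\end{equation*}
Plugging into the identity and simplifying collapses the ${\VI}(t,0)$ terms to give
\begin{equation*}
{\RVI}(t,0)\;\ge\;\E^{-t}\bigl({\VI}(t,0)+\varrho\,t\bigr)\;-\;(1-\E^{-t})\,\osc_{\RR^{d}}{\RVI}_{0}\,.
\end{equation*}
The first summand is nonnegative by $\VI(t,0)\ge-\varrho t$ and tends to $0$ by Lemma~\ref{L-asympt}, so $\liminf_{t\to\infty}{\RVI}(t,0)\ge-\osc_{\RR^{d}}{\RVI}_{0}$. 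Boundedness of $t\mapsto{\RVI}(t,0)$ on $[0,\infty)$ is then immediate from the two estimates above combined with the continuity of $t\mapsto{\RVI}(t,0)$ on any compact subinterval.

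The only subtle point is making sure the cancellation in the upper-bound calculation is used in the right direction: \eqref{E5.1b} controls ${\VI}(t,0)-{\VI}(s,0)$ only when $s\le t$, and this is precisely the range of integration, so the estimate is clean. Everything else is routine manipulation of the linear ODE \eqref{E-ode} underlying \eqref{E-virvi}.
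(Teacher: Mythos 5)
Your proof is correct and follows essentially the same route as the paper: the identity \eqref{E-virvi} at $x=0$ is the paper's representation ${\RVI}(t,0)=g(t)-\int_{0}^{t}\E^{s-t}g(s)\,\D{s}$ with $g(t)={\VI}(t,0)+\varrho\,t$, and you pinch the increment $g(t)-g(s)$ using exactly the same two inputs, \eqref{E5.1b} for the upper bound and Lemma~\ref{L-nice} for the lower bound, with the same sublinearity argument killing the boundary term.
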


\begin{proof}
Define
\begin{equation*}
g(t)\df 
\inf_{U\in\Uadm}\;\Exp^{U}_{0}
\left[\int_{0}^{t}r(X_{s},U_{s})\,\D{s} + {\RVI}_{0}(X_{t})\right]\,.
\end{equation*}
By \eqref{E-SR} we have
\begin{align}\label{E-Vbound-1}
{\RVI}(t,0) &= g(t) - \int_{0}^{t} \E^{s-t} g(s)\,\D{s}
\\[5pt]\nonumber
&= \bigl(1-\E^{-t}\bigr)^{-1}
\int_{0}^{t} \E^{s-t}\bigl(g(t)-g(s)\bigr)\,\D{s}
\\[5pt]\nonumber
&\mspace{200mu}+\bigl(1-\E^{-t}\bigr)^{-1}\E^{-t}\int_{0}^{t} \E^{s-t} g(s)\,\D{s}\,,
\end{align}
for $t>0$.
By Lemma~\ref{L5.1}, $g(t)\le \widehat{M}_{0}+{\RVI}_{0}(0)+\varrho\, t$.
Therefore the second term on the right hand side of \eqref{E-Vbound-1} vanishes
as $t\to\infty$.
By Lemma~\ref{L-nice}, $g(t)-g(s)\ge -\osc_{\RR^{d}}\;{\RVI}_{0}$
for all $s\le t$.
Also, by Lemma~\ref{L5.1},
$g(t)-g(s)\le \widehat{M}_{0}+\varrho(t-s)$ for all $s\le t$.
Evaluating the first integral on  the right hand side of\eqref{E-Vbound-1}
we obtain the bound
\begin{equation}\label{E-Vbound-2}
-\osc_{\RR^{d}}\;{\RVI}_{0}\le
\int_{0}^{t} \E^{s-t}\bigl(g(t)-g(s)\bigr)\,\D{s}
\le \widehat{M}_{0}+\varrho\qquad \forall t>0\,.
\end{equation}
The result follows by \eqref{E-Vbound-1}--\eqref{E-Vbound-2}.
\qquad
\end{proof}

Combining Corollary~\ref{C-Harnack}, 
the boundedness of $t\mapsto {\RVI}(t,0)$ asserted in Lemma~\ref{L-Vbound},
and \eqref{E-ident}, it follows that $x\mapsto {\RVI}(t,x)$
is locally bounded in $\RR^{d}$, uniformly in $t\ge0$.
Recall Definition~\ref{D-flow}. 
The standard interior estimates of the solutions of \eqref{ERVI} provide us
with the following regularity result:

\begin{theorem}
Under the hypotheses of Lemma~\ref{L-Vbound}
the closure of the orbit $\{\Phi_{t}[{\RVI}_{0}]\,,\ t\in\RR_{+}\}$
is locally compact in $\Cc^{2}(\RR^{d})$.
\end{theorem}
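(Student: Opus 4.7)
The plan is to deduce local compactness in $\Cc^{2}(\RR^{d})$ from standard interior parabolic regularity applied to \eqref{ERVI}, with the uniform local $L^{\infty}$ bound on $\RVI$ as the sole input beyond the structural assumptions (A1)--(A3). The paragraph immediately preceding the theorem observes that
\begin{equation*}
\sup_{t\ge 0}\; \sup_{x\in K}\;\abs{\RVI(t,x)} < \infty
\end{equation*}
for every compact $K\subset\RR^{d}$: the bound on compact sets of the form $\overline{B}_{0}$ comes from Corollary~\ref{C-Harnack} together with the boundedness of $t\mapsto\RVI(t,0)$ in Lemma~\ref{L-Vbound} via the identity \eqref{E-ident} (which gives $\RVI(t,x)-\RVI(t,0)=\VI(t,x)-\VI(t,0)$), and the extension to an arbitrary compact $K\subset\RR^{d}$ follows from Lemma~\ref{L-outer}, whose right-hand side is independent of $t$.

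The next step is to view \eqref{ERVI} as a linear parabolic equation with bounded coefficients. Fixing a measurable selector $\Hv_{t}$ from the minimizer as in Definition~\ref{D-minimizer}, and letting $\bar b(t,x)\df b\bigl(x,\Hv_{t}(x)\bigr)$ and $\bar r(t,x)\df r\bigl(x,\Hv_{t}(x)\bigr)$, the function $\RVI$ solves
\begin{equation*}
\partial_{t}\RVI - a^{ij}(x)\,\partial_{ij}\RVI - \bar b^{i}(t,x)\,\partial_{i}\RVI
= \bar r(t,x) - \RVI(t,0)
\end{equation*}
in the a.e.\ sense on $(0,\infty)\times\RR^{d}$. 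On any cylinder $(T,T+2)\times B_{2R}$, (A1)--(A3) guarantee that $a^{ij}$ is uniformly nondegenerate and bounded, $\bar b$ is bounded, and $\bar r$ is bounded, with bounds independent of $T\ge 0$. Lemma~\ref{L-Vbound} further ensures $t\mapsto\RVI(t,0)$ is bounded, so the right-hand side is bounded uniformly in $T$.

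With these uniform bounds on the coefficients, data, and solution, the interior $W^{1,2,p}_{\mathrm{loc}}$ estimates for linear nondivergence parabolic equations (e.g. \cite[Chapter~VII]{Lady} or Krylov--Safonov) yield, for every $p\in(1,\infty)$,
\begin{equation*}
\bnorm{\RVI}_{\Sob^{1,2,p}((T+1,T+2)\times B_{R})}
\le C(R,p)\qquad\forall T\ge 0\,,
\end{equation*}
with $C(R,p)$ independent of $T$. Choosing $p>d+2$ and invoking the parabolic Sobolev embedding $\Sob^{1,2,p}\hookrightarrow\Cc^{1+\alpha/2,\,2+\alpha}$ produces local H\"older equicontinuity of $\partial_{t}\RVI$, $\nabla\RVI$ and $D^{2}\RVI$, uniformly in the base point $T$. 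Arzel\`a--Ascoli together with a standard diagonal extraction then shows that every sequence $\{\Phi_{t_{n}}[\RVI_{0}]\}$ admits a subsequence converging in $\Cc^{2}$ uniformly on compact subsets of $\RR^{d}$, which is precisely the asserted local compactness of the orbit closure.

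The only real issue to check is the step where the uniform $L^{\infty}$ bound is upgraded to uniform $\Sob^{1,2,p}_{\mathrm{loc}}$ bounds; this is a classical consequence of the Aleksandrov--Bakelman--Pucci principle and Krylov's $L^{p}$ theory applied to cylinders of fixed size, and no new probabilistic or PDE ingredient beyond what has already been assembled is needed.
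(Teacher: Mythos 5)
Your strategy is the same as the paper's: assemble a sup bound on $\RVI$ over cylinders $[n,n+1]\times B_{R}$ uniform in $n$ (via Corollary~\ref{C-Harnack}, Lemma~\ref{L-inner}, Lemma~\ref{L-Vbound} and \eqref{E-ident}), then apply interior parabolic estimates on cylinders of fixed size and conclude by Arzel\`a--Ascoli; the paper compresses the second step into a single citation of \cite[Theorem~5.1]{Lady}. The one step in your write-up that does not work as stated is the embedding: for $p>d+2$ the parabolic Sobolev embedding gives $\Sob^{1,2,p}\hookrightarrow\Cc^{(1+\alpha)/2,\,1+\alpha}$, i.e.\ uniform local H\"older control of $\RVI$ and $\nabla\RVI$ only --- it does not control $D^{2}\RVI$ in any H\"older norm, so at that stage Arzel\`a--Ascoli yields precompactness in $\Cc^{1}$, not in $\Cc^{2}$. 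Moreover, Schauder theory cannot be applied directly to your frozen linear equation, since the coefficient $\bar b(t,x)=b\bigl(x,\Hv_{t}(x)\bigr)$ is merely measurable. The standard repair is a bootstrap that exploits the quasilinear structure: once $\nabla\RVI$ is locally H\"older uniformly in the base time, the term $H(x,\nabla\RVI)-\RVI(t,0)$ is a locally H\"older right-hand side (using the local Lipschitz continuity of $b$ and $r$ in $x$, the local Lipschitz continuity of $p\mapsto H(x,p)$, and the H\"older-in-$t$ continuity of $\RVI(t,0)$ already obtained), and interior Schauder estimates for $\partial_{t}\RVI-a^{ij}\partial_{ij}\RVI=H(x,\nabla\RVI)-\RVI(t,0)$ then give the uniform local $\Cc^{1+\alpha/2,\,2+\alpha}$ bounds needed for $\Cc^{2}$ compactness. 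This bootstrap is exactly what the quasilinear interior estimate \cite[Theorem~5.1]{Lady} packages, so the defect is one of citation and of a misstated embedding rather than of strategy.
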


\begin{proof}
By Corollary~\ref{C-Harnack} and Lemma~\ref{L-inner}, the oscillation
of ${\VI}$ is bounded on any cylinder $[n,n+1]\times B_{R}$, uniformly
over $n\in\NN$.
This together with Lemma~\ref{L-Vbound} imply that $\Phi_{t}[{\RVI}_{0}](x)$
is bounded on $(t,x)\in [n,n+1]\times B_{R}$, for any $R>0$, uniformly
in $n\in\NN$.
It follows that the derivatives
$\partial_{ij}\Phi_{t}[{\RVI}_{0}]$
are H\"older equicontinuous on every ball $B_{R}$ uniformly in $t$
\cite[Theorem~5.1]{Lady}.
The result follows.
\end{proof}

\smallskip
We now turn to the proof of our main result.

\smallskip
{\em Proof of Theorem~\ref{T-main}}.
Let $\{t_{n}\}$ be any diverging sequence and let $f$ be
any limit in in the topology of Markov controls
(see \cite[Section~2.4]{book}) of $\{\Hv^{t_{n}}\}$
along some subsequence of $\{t_{n}\}$ also denoted as $\{t_{n}\}$.
By Fatou's lemma and the stochastic representation of $V^{*}$ in Theorem~\ref{T3.1},
we have,
\begin{align}\label{ETm1}
\liminf_{n\to\infty}\;\Exp_{x}^{\Hv^{t_{n}}}
\biggl[\int_{0}^{\tc\wedge{t}_{n}}\Or\bigl(X_{s},\Hv^{t_{n}}_{s}(X_{s})\bigr)
\,\D{s}\biggr]
&\ge \Exp_{x}^{f}
\biggl[\int_{0}^{\tc}\Or\bigl(X_{s},f_{s}(X_{s})\bigr)\,\D{s}\biggr]
\\[5pt]
&\ge \inf_{v\in\Ussm}\;\Exp_{x}^{v}
\biggl[\int_{0}^{\tc}\Or\bigl(X_{s},v(X_{s})\bigr)\,\D{s}\biggr]
\nonumber\\[5pt]\nonumber
&\ge V^{*}(x) - \norm{V^{*}}_{\infty,\Bo}\qquad\forall x\in \Bo^{c}\,.
\end{align}
The second inequality in \eqref{ETm1} is due to the fact that
the infimum of
\begin{equation*}
\Exp_{x}^{U}\biggl[\int_{0}^{\tc}\Or\bigl(X_{s},U_{s})\bigr)\,\D{s}\biggr]
\end{equation*}
over all $U\in\Uadm$ is realized at some $v\in\Ussm$, while the
third inequality follows by \eqref{E-strepr}.
Therefore, by \eqref{E-ident}, \eqref{E5.3}, \eqref{ETm1} and
Lemmas~\ref{L5.3} and \ref{L-Vbound}
we have that
\begin{align}\label{ETm2}
\liminf_{t\to\infty}\;{\RVI}(t,x)
&= \liminf_{t\to\infty}\; \bigl(\varPsi(t,x)+{\RVI}(t,0)\bigr)
\\[5pt]\nonumber
&\ge
V^{*}(x)- \norm{V^{*}}_{\infty,\Bo} 
- C_{0}-\widehat{M}_{0}- \osc_{\RR^{d}}\;{\RVI}_{0}
\qquad \forall x\in\Bo^{c}\,.
\end{align}
Also, by \eqref{E5.1a} and Lemma~\ref{L5.3} we obtain
\begin{align}\label{ETm3}
\limsup_{t\to\infty}\;{\RVI}(t,x)
&= \limsup_{t\to\infty}\; \bigl(\varPsi(t,x)+{\RVI}(t,0)\bigr)
\varPsi(t,x)
\\[5pt]\nonumber
&\le  C_{0} + 2\,\norm{{\RVI}_{0}}_{\infty} 
+ \bigl(1 + \varrho\,\delta_{0}^{-1}\bigr) V^{*}(x) + \widehat{M}_{0}+\varrho
\end{align}
for all $(t,x)\in\RR_{+}\times\RR^{d}$.

Hence, by \eqref{ETm2}--\eqref{ETm3} if we select
\begin{equation*}
c= -\bigl(\norm{V^{*}}_{\infty,\Bo} 
+C_{0}+\widehat{M}_{0}+ \osc_{\RR^{d}}\;{\RVI}_{0})\,,
\end{equation*} 
then any $\omega$-limit point
of ${\RVI}(t,x)$ as $t\to\infty$ lies in $\cG_{c}$
(see Definition~\ref{D-flow}).
By Theorem~\ref{T3.10} if ${\RVI}_{0}\in \cG_{c}$,
then ${\RVI}(t,x)\to V^{*}(x)+\varrho$ as
$t\to\infty$.
Since the $\omega$-limit set of ${\RVI}_{0}$ is invariant and the only invariant
set in $\cG_{c}$ is the singleton $\{V^{*}-V^{*}(0)+\varrho\}$ the result follows.
\qquad
\endproof

\section{Concluding Remarks}\label{S-concl}
We have studied the relative value iteration algorithm
for an important class of ergodic control problems wherein instability
is possible, but is heavily penalized by the near-monotone structure of the
running cost.
The near-monotone cost structure plays a crucial role in the analysis
and the proof of stabilization of the quasilinear parabolic Cauchy initial
value problem that models the algorithm.

We would like to conjecture that the RVI converges starting from
any initial condition ${\RVI}_{0}\in\order_{V^{*}}$.
It is only the estimate in Lemma~\ref{L-nice} that restricts us
to consider bounded initial conditions only.
We want to mention here that a related such estimate can be
obtained as follows:
\begin{align*}
{\VI}(t,x) &= \inf_{U\in\Uadm}\;\Exp^{U}_{x}
\left[\int_{0}^{t}\Or(X_{s},U_{s})\,\D{s}
+{\RVI}_{0}(X_{t})\right]
\nonumber\\[5pt]
&= \inf_{U\in\Uadm}\;\Exp^{U}_{x}
\left[\int_{0}^{\tau}\Or(X_{s},U_{s})\,\D{s}
+{\VI}(t-\tau,X_{t-\tau})\right]
\nonumber\\[5pt]
&\ge -\varrho\,\tau + \min_{y\in\RR^{d}}\;{\VI}(t-\tau,y)\qquad
\forall \tau\in[0,t]\,,\quad\forall x\in\RR^{d}\,.
\end{align*}
In particular
\begin{equation*}
\min_{\RR^{d}}\;{\VI}(t-\tau,\cdot\,)
-\min_{\RR^{d}}\; {\VI}(t,\cdot\,)\le \varrho\,\tau
\qquad \forall \tau\in[0,t]\,,
\end{equation*}
and this estimate does not depend on the initial data ${\RVI}_{0}$.
This suggests that it is probably worthwhile studying the variation of the
RVI algorithm that results by replacing ${\RVI}(t,0)$ by
$\min_{\RR^{d}}\; {\RVI}(t,\cdot\,)$ in \eqref{E-RVI}.

Rate of convergence results and computational aspects of the algorithm
are open issues.

\newpage
\def\cprime{$'$} \def\cprime{$'$} \def\cprime{$'$}

\end{document}